\numberwithin{equation}{section}
\numberwithin{equation}{section}
\theoremstyle{plain}
\newtheorem{theorem}[equation]{Theorem}
\newtheorem{conjecture}[equation]{Conjecture}
\newtheorem{lemma}[equation]{Lemma}
\newtheorem{corollary}[equation]{Corollary}
\newtheorem{proposition}[equation]{Proposition}
\theoremstyle{definition}
\newtheorem{definition}[equation]{Definition}
\newtheorem{example}[equation]{Example}
\newtheorem{remark}[equation]{Remark}
\theoremstyle{remark}
\newcommand{\R}{\mathbb{R}}
\newcommand{\Z}{\mathbb{Z}}
\newcommand{\C}{\mathbb{C}}
\newcommand{\B}{\mathbb{B}}
\newcommand{\M}{\mathsf{M}}
\newcommand{\capa}{\mathrm{cap}\,}
\newcommand{\tnh}{\mathrm{th}}
\newcounter{alphabet}
\newcounter{minutes}\setcounter{minutes}{\time}
\newcounter{hours}\setcounter{hours}{\time}
\begin{document}
\bibliographystyle{amsplain}
\title
{
Intrinsic metrics in ring domains
}

\def\thefootnote{}
\footnotetext{
\texttt{\tiny File:~\jobname .tex,
          printed: \number\year-\number\month-\number\day,
          \thehours.\ifnum\theminutes<10{0}\fi\theminutes}
}
\makeatletter\def\thefootnote{\@arabic\c@footnote}\makeatother

\author[O. Rainio]{Oona Rainio}
\address{Department of Mathematics and Statistics, University of Turku, FI-20014 Turku, Finland}
\email{ormrai@utu.fi}

\keywords{Condenser capacity, hyperbolic geometry, hyperbolic type metrics, intrinsic metrics, M\"obius metric, ring capacity, triangular ratio metric.}
\subjclass[2010]{Primary 51M10; Secondary 30C85}
\begin{abstract}
Three hyperbolic type metrics including the triangular ratio metric, the $j^*$-metric and the M\"obius metric are studied in an annular ring. The Euclidean midpoint rotation is introduced as a method to create upper and lower bounds for these metrics, and their sharp inequalities are found. A new M\"obius-invariant lower bound is proved for the conformal capacity of a general ring domain by using a symmetric quantity defined with the M\"obius metric. 
\end{abstract}
\maketitle

\section{Introduction}

Given two points $x,y$ in a domain $G$, their \emph{intrinsic distance} indicates how these points are located with respect to both each other and the domain's boundary $\partial G$. One way to measure these kinds of distances is the hyperbolic metric, but there are also numerous other hyperbolic type and intrinsic metrics that can be used. In this article, we focus on three different metrics to study the intrinsic geometry of the annular ring $R(r,1)=\{z\in\C\text{ }|\text{ }r<|z|<1\}$ with $0<r<1$. 


While the value of the hyperbolic metric can be defined in any plane domain by mapping the domain conformally onto the unit disk \cite[(7.13), p. 133]{kl}, this method does not work for a non-simply connected domain such as the annular ring $R(r,1)$. Consequently, the only way to compute the hyperbolic distance between points $x,y\in R(r,1)$ is to find the infimum of the line integrals of the hyperbolic density in \cite[(7.18), p. 135]{kl} over all rectifiable curves $\gamma$ from $x$ to $y$ in $R(r,1)$, see \cite[Def. 7.3, p. 125]{kl}. Since there is no explicit formula for this infimum, the hyperbolic metric is not very well-suited for measuring the intrinsic distances in this kind of domain.

Thus, in order to study the intrinsic geometry of the ring $R(r,1)$, we use here a few known generalizations of the hyperbolic metric: the $j^*$-metric found first in \cite{hvz} by modifying the distance ratio metric introduced in 1979 by Gehring and Palka \cite{GO79}, the triangular ratio metric introduced by P. H\"ast\"o in 2002 \cite{h} and recently studied in \cite{chkv, sch, fss, sinb, sqm}, and the M\"obius metric originally introduced in \cite[pp. 115-116]{cgqm} and extensively studied by P. Seittenranta in his PhD thesis \cite{S99}. These metrics are important tools in hyperbolic geometry because they share several properties of the hyperbolic metric, such as invariance under similarity mappings, monotonicity with respect to domain and sensitivity to boundary variation \cite[pp. 191-192, 209]{hkv}. Furthermore, as noted in the results of this article, the values of these three metrics can be often bounded with their distances found by rotating the original points around their midpoint.

The structure of this article is as follows. In Section 3, we study the triangular ratio metric in an annular ring and show how the Euclidean midpoint rotation can be used to create upper and lower bounds for the triangular ratio metric in this domain, see Definition \ref{def_emr} and Theorem \ref{thm_emrForS}. In Section 4, we introduce Theorem \ref{thm_deltaInRing} that can be used to compute the M\"obius metric in an annular ring, and also present sharp inequalities between the three hyperbolic type metrics considered. We also inspect the metrical circles drawn with these three metrics. Finally, in Section 5, we consider the M\"obius metric in more general ring domains instead of just an annular ring and present a M\"obius invariant lower bound for the capacity of a ring.

{\bf Acknowledgements.} This research continues my earlier work in \cite{sch, fss, seit, sinb, sqm}. Three last mentioned of these articles have been co-written with Professor Matti Vuorinen, to whom I am indebted for all guidance and support. My research is funded by the University of Turku Graduate School UTUGS. 

\section{Preliminaries}

For any three points $x,y,z\in\R^n$, let $\measuredangle XZY$ be the angle centered at $z$ with the point $x$ on its one side and the point $y$ on its other side. Denote the Euclidean line passing through $x,y$ by $L(x,y)$, the Euclidean line segment from $x$ to $y$ by $[x,y]$, and the smaller angle between the lines $L(x,0)$ and $L(y,0)$ by $\measuredangle XOY$. Furthermore, for all $x\in\R^n$ and $r>0$, let $B^n(x,r)$ be the $x$-centered Euclidean open ball with the radius $r$, $\overline{B}^n(x,r)$ its closure and $S^{n-1}(x,r)$ its boundary sphere. For the unit ball and unit sphere, use the simplified notations $\B^n=B^n(0,1)$ and $S^{n-1}=S^{n-1}(0,1)$. Denote $R(r,1)=\{z\in\C\text{ }|\text{ }r<|z|<1\}$ for $0<r<1$ as in Introduction, and $\overline{\R}^n=\R^n\cup\{\infty\}$.

Define the hyperbolic metric $\rho$ for all points $x,y$ in the unit ball $\B^n$ with the formulas \cite[(4.14), p. 55]{hkv}
\begin{align}\label{formula_rhoB}
\text{sh}^2\frac{\rho_{\B^n}(x,y)}{2}&=\frac{|x-y|^2}{(1-|x|^2)(1-|y|^2)},
\quad
\text{th}\frac{\rho_{\B^2}(x,y)}{2}=\left|\frac{x-y}{1-x\overline{y}}\right|,
\end{align}
where $\overline{y}$ is the complex conjugate of $y$. The hyperbolic metric is \emph{conformally invariant}: If a conformal mapping $f$ fulfills $h:G\to G'=h(G)$ for some domains $G,G'\subset\overline{\R}^n$, then
\begin{align*}
\rho_G(x,y)=\rho_{G'}(h(x),h(y))\quad x,y\in G.    
\end{align*}
According to the Riemann mapping theorem, any simply connected plane domain $G$ can be mapped conformally onto the unit disk, so the aforementioned formula for $\rho_{\B^2}(x,y)$ can be used to compute the hyperbolic metric in different plane domains, such as the upper half-plane and an open sector \cite[Ex. 1, p. 133]{kl}.

By denoting the Euclidean distance from a point $x$ in a domain $G\subsetneq\R^n$ to the boundary $\partial G$ by $d_G(x)=\inf\{|x-z|\text{ }|\text{ }z\in\partial G\}$, we can define the following hyperbolic type metrics:
The \emph{distance ratio metric} \cite[p. 685]{chkv} $j_G:G\times G\to[0,\infty)$,
\begin{align*}
j_G(x,y)=\log\left(1+\frac{|x-y|}{\min\{d_G(x),d_G(y)\}}\right),   
\end{align*}
the \emph{$j^*$-metric} \cite[2.2, p. 1123 \& Lemma 2.1, p. 1124]{hvz} $j^*_G:G\times G\to[0,1],$
\begin{align*}
j^*_G(x,y)={\rm th}\frac{j_G(x,y)}{2}=\frac{|x-y|}{|x-y|+2\min\{d_G(x),d_G(y)\}},    
\end{align*}
the \emph{triangular ratio metric} \cite[(1.1), p. 683]{chkv} $s_G:G\times G\to[0,1],$ 
\begin{align*}
s_G(x,y)=\frac{|x-y|}{\inf_{z\in\partial G}(|x-z|+|z-y|)}. 
\end{align*}

For all distinct points $x,y\in\overline{\R}^n$, define the \emph{spherical (chordal) metric} \cite[(3.6), p. 29]{hkv}
\begin{align*}
q(x,y)=\frac{|x-y|}{\sqrt{1+|x|^2}\sqrt{1+|y|^2}},\quad\text{if}\quad x,y\in\R^n;
\quad
q(x,\infty)=\frac{1}{\sqrt{1+|x|^2}}.
\end{align*}
Using this definition, the expression of the \emph{cross-ratio} can be written for any four distinct points $a,b,c,d\in\overline{\R}^n$ as in \cite[(3.10), p. 33]{hkv}:
\begin{align*}
|a,b,c,d|=\frac{q(a,c)q(b,d)}{q(a,b)q(c,d)},\text{ } a,b,c,d\in\overline{\R}^n;\quad
|a,b,c,d|=\frac{|a-c||b-d|}{|a-b||c-d|},\text{ }a,b,c,d\in\R^n.
\end{align*}
Suppose then $G$ is a domain in $\overline{\R}^n$ so that its complement $(\overline{\R}^n\backslash G)$ contains at least two points. Then the \emph{M\"obius metric} in this domain $G$ is the function $\delta_G:G\times G\to[0,\infty),$ \cite[Def. 1.1, p. 511]{S99}
\begin{align*}
\delta_G(x,y)=\sup_{a,b\in\partial G}\log(1+|a,x,b,y|).    
\end{align*}

While the M\"obius metric is not conformally invariant like the hyperbolic metric, it is invariant under an important subclass of conformal mappings called the M\"obius transformations:

\begin{definition}
\cite[Ex. 3.2, pp. 25-26; Def. 3.6, p. 27 \& Def. 3.7, p. 27]{hkv}
The \emph{hyperplane} perpendicular to a vector $u\in\R^n\backslash\{0\}$ and at distance $t\slash|u|$ from the origin for some $t\geq0$ is 
\begin{align*}
P(u,t)=\{x\in\R^n\text{ }|\text{ }x\cdot u=t\}\cup\{\infty\},   
\end{align*}
where $\cdot$ is the symbol of the dot product. The reflection in a hyperplane $P(u,t)$ is $h_r:\overline{\R}^n\to\overline{\R}^n$,
\begin{align*}
h_r(x)=x-2(x\cdot u-t)\frac{u}{|u|^2},\quad h_r(\infty)=\infty,    
\end{align*}
and the inversion in the sphere $S^{n-1}(v,r)$ is $h_i:\overline{\R}^n\to\overline{\R}^n$,
\begin{align*}
h_i(x)=v+\frac{r^2(x-v)}{|x-v|^2},\quad h_i(v)=\infty,\quad h_i(\infty)=v. \end{align*}
Any function $f:\overline{\R}^n\to\overline{\R}^n$ created as a function composition $f=h_1\circ\cdots\circ h_m$ by combining a number $m\in\Z^+$ of these reflections and inversions is a \emph{M\"obius transformation}. If the number $m$ here is even, the M\"obius transformation $f$ is \emph{sense-preserving}, and otherwise $f$ is \emph{sense-reversing}.
\end{definition}

\begin{example}\label{ex_mobf}
For any $0<r<1$, the function $f:\overline{\R}^2\to\overline{\R}^2$, \begin{align*}
f(x)=rx\slash |x|^2,\quad f(0)=\infty,\quad f(\infty)=0,    
\end{align*}
is an inversion in the circle $S^1(0,\sqrt{r})$ and therefore a sense-reversing M\"obius transformation that preserves the annular ring $R(r,1)$ but maps $S^1(0,r)$ onto $S^1$ and vice versa.
\end{example}

\begin{theorem}\label{thm_seitminvariant} \cite{S99}, \cite[Thm 5.16, p. 75]{hkv}
The M\"obius metric $\delta_G$ is M\"obius invariant: If $G\subset\overline{\R}^n$ is a domain such that ${\rm card}(\overline{\R}^n\backslash G)\geq2$ and $f:\overline{\R}^n\to\overline{\R}^n$ is a M\"obius transformation, then for all $x,y\in G$,
\begin{align*}
\delta_G(x,y)=\delta_{f(G)}(f(x),f(y)).    
\end{align*}
\end{theorem}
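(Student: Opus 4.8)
The plan is to reduce the whole statement to the single classical fact that the absolute (cross) ratio $|a,b,c,d|$ is invariant under every M\"obius transformation, and then to transport the supremum in the definition of $\delta_G$ through $f$. Recall that by definition $\delta_G(x,y)=\sup_{a,b\in\partial G}\log(1+|a,x,b,y|)$, where $|a,x,b,y|=q(a,b)q(x,y)/(q(a,x)q(b,y))$. Since $\log(1+t)$ is strictly increasing, it suffices to show that the family of values $\{|a,x,b,y| : a,b\in\partial G\}$ is carried onto the family $\{|a',f(x),b',f(y)| : a',b'\in\partial f(G)\}$ by $f$.

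First I would establish the invariance of the cross-ratio: for any M\"obius transformation $f$ and any four distinct points $a,b,c,d\in\overline{\R}^n$,
\[
|f(a),f(b),f(c),f(d)|=|a,b,c,d|.
\]
Because a M\"obius transformation is, by the definition given above, a finite composition of reflections in hyperplanes and inversions in spheres, it is enough to verify this identity for each generator. A reflection $h_r$ in a hyperplane is a Euclidean isometry, so it preserves every Euclidean distance $|x-y|$ and hence the Euclidean form of the cross-ratio; the identity is then immediate for all finite points. For an inversion $h_i$ in $S^{n-1}(v,r)$ one uses the standard distance identity $|h_i(x)-h_i(y)|=r^2|x-y|/(|x-v||y-v|)$, upon which the factors $|x-v|$, $|y-v|$, $|z-v|$, $|w-v|$ cancel in pairs in the ratio $\frac{|a-c||b-d|}{|a-b||c-d|}$, leaving it unchanged. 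The point at infinity and the preimage $v$ are handled by passing to the chordal form $|a,b,c,d|=q(a,c)q(b,d)/(q(a,b)q(c,d))$, which is continuous on $\overline{\R}^n$ and agrees with the Euclidean form wherever all four points are finite, so the identity extends to every admissible configuration by continuity.

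With cross-ratio invariance in hand, I would finish as follows. A M\"obius transformation $f$ is a homeomorphism of $\overline{\R}^n$ onto itself, so it maps $G$ onto $f(G)$ and restricts to a bijection $\partial G\to\partial f(G)$ of the boundaries. Applying the invariance identity to the four points $a,x,b,y$ gives $|a,x,b,y|=|f(a),f(x),f(b),f(y)|$ for all $a,b\in\partial G$ and $x,y\in G$. Writing $a'=f(a)$ and $b'=f(b)$ and letting $a,b$ run over $\partial G$, the pairs $(a',b')$ run exactly over $\partial f(G)\times\partial f(G)$, whence
\[
\delta_G(x,y)=\sup_{a,b\in\partial G}\log(1+|a,x,b,y|)=\sup_{a',b'\in\partial f(G)}\log(1+|a',f(x),b',f(y)|)=\delta_{f(G)}(f(x),f(y)),
\]
which is the claim.

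The main obstacle is the inversion case of the cross-ratio identity, together with the bookkeeping at $\infty$: one must check that the four entries in $|a,x,b,y|$ remain a legitimate argument for the absolute ratio (degenerate cases, e.g. $a=b$ or $x=y$, simply yield the value $0$ on both sides) when $f$ sends one of them to $\infty$ or to the inversion center $v$, and that the chordal cross-ratio genuinely reduces to the Euclidean one. Once this generating identity is secured, the reduction of the supremum through the boundary homeomorphism is routine.
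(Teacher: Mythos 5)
The paper itself gives no proof of this theorem: it is quoted from Seittenranta \cite{S99} and \cite[Thm 5.16, p. 75]{hkv}, so there is no in-paper argument to compare against. Your proof is correct and is essentially the classical argument behind those citations: reduce the claim to the M\"obius invariance of the absolute ratio, verify that invariance on the generators (reflections as Euclidean isometries; inversions via the identity $|h_i(x)-h_i(y)|=r^2|x-y|/(|x-v||y-v|)$, whose factors cancel in pairs in the ratio), handle $\infty$ and the inversion center through the chordal form and continuity, and then transport the supremum through the boundary bijection $\partial G\to\partial f(G)$ induced by the homeomorphism $f$ of $\overline{\R}^n$. The two delicate points you single out --- the cancellation in the inversion case and the fact that a homeomorphism of $\overline{\R}^n$ carries $\partial G$ onto $\partial f(G)$ --- are exactly the ingredients needed, and both are handled correctly, including the degenerate configurations ($a=b$ or $x=y$) which contribute the value $0$ on both sides and so do not disturb the supremum.
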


Consider the following equalities and inequalities between the metrics introduced above.

\begin{theorem}\label{thm_pr_ineqs}\cite[Thm 5.16, p. 75]{hkv}, \cite[Cor. 3.8, p.5]{seit}
For all points $x,y$ in a domain $G\subset\R^n$ such that ${\rm card}(\overline{\R}^n\backslash G)\geq2$:
\begin{align*}
&(1)\quad
j_G(x,y)\leq\delta_G(x,y)\leq2j_G(x,y),
\quad\text{and}\quad
\delta_G(x,y)=j_G(x,y)
\quad\text{if}\quad
G=\R^n\backslash\{0\},\\
&(2)\quad
j^*_G(x,y)\leq{\rm th}(\delta_G(x,y)\slash2)\leq2j^*_G(x,y),\\
&(3)\quad
s_G(x,y)\slash2\leq{\rm th}(\delta_G(x,y)\slash2)\leq2s_G(x,y),\\
&(4)\quad
\delta_G(x,y)=\rho_G(x,y)
\quad\text{if}\quad
G=\B^n.
\end{align*}
\end{theorem}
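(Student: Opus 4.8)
The plan is to prove part (1) directly from the definitions, since it is the only part requiring genuine geometry, and then to obtain (2) and (3) as formal consequences of (1) together with elementary metric comparisons, and (4) by a M\"obius normalization. Throughout I write $m=\min\{d_G(x),d_G(y)\}$ and $t=|x-y|/m$, so that $j_G(x,y)=\log(1+t)$, and I recall that in $\R^n$ the cross-ratio reads $|a,x,b,y|=|a-b|\,|x-y|/(|a-x|\,|b-y|)$.

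For the upper bound $\delta_G\le 2j_G$ in (1), I fix $a,b\in\partial G$ and estimate the cross-ratio from above. Using $|a-b|\le|a-x|+|x-y|+|y-b|$ in the numerator and $|a-x|\ge d_G(x)\ge m$, $|b-y|\ge d_G(y)\ge m$ in the denominator, a short manipulation gives $|a,x,b,y|\le 2t+t^2=(1+t)^2-1$. Hence $1+|a,x,b,y|\le(1+t)^2$ for every admissible pair, and taking logarithms and the supremum yields $\delta_G(x,y)\le 2\log(1+t)=2j_G(x,y)$. For the lower bound $j_G\le\delta_G$, I assume without loss of generality that $m=d_G(x)$ (the quantity $\delta_G$ is symmetric in $x,y$) and let $a\in\partial G$ be a nearest boundary point to $x$, so $|a-x|=m$. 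It then suffices to produce $b\in\partial G$ with $|a-b|\ge|b-y|$, because then $|a,x,b,y|=t\,|a-b|/|b-y|\ge t$. If $G$ is bounded I take $b$ to be the first point where the ray issuing from $y$ in the direction $y-a$ meets $\partial G$; then $y\in[a,b]$, so $|a-b|=|a-y|+|y-b|\ge|b-y|$. If $G$ is unbounded then $\infty\in\partial G$, and letting $b\to\infty$ gives $|a-b|/|b-y|\to 1$, which suffices upon passing to the supremum. Finally, the equality $\delta_G=j_G$ for $G=\R^n\backslash\{0\}$ is a direct computation: here $\partial G=\{0,\infty\}$, so only the pair $\{a,b\}=\{0,\infty\}$ contributes, and evaluating the cross-ratio gives $\delta_G(x,y)=\log(1+|x-y|/\min\{|x|,|y|\})=j_G(x,y)$.

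Parts (2) and (3) follow formally. Since $u\mapsto{\rm th}(u/2)$ is increasing and $j^*_G={\rm th}(j_G/2)$, applying it to $j_G\le\delta_G\le2j_G$ gives $j^*_G\le{\rm th}(\delta_G/2)\le{\rm th}(j_G)$; the right-hand bound in (2) then comes from the identity ${\rm th}(2u)=2\,{\rm th}(u)/(1+{\rm th}^2u)\le 2\,{\rm th}(u)$ taken with $u=j_G/2$. For (3) I would first record the elementary two-sided comparison $j^*_G\le s_G\le 2j^*_G$: the left inequality follows from $\inf_{z\in\partial G}(|x-z|+|z-y|)\le|x-y|+2m$ (test with a nearest point to the closer of $x,y$), and the right one from $\inf_{z\in\partial G}(|x-z|+|z-y|)\ge\tfrac12|x-y|+m$ (average the bounds $|x-z|+|z-y|\ge|x-y|$ and $|x-z|+|z-y|\ge d_G(x)+d_G(y)\ge 2m$). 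Combining this with (2) gives $s_G/2\le j^*_G\le{\rm th}(\delta_G/2)$ and ${\rm th}(\delta_G/2)\le 2j^*_G\le 2s_G$, which is exactly (3).

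For (4), both $\delta_{\B^n}$ and $\rho_{\B^n}$ are invariant under the M\"obius automorphisms of $\B^n$ (by Theorem \ref{thm_seitminvariant} and the conformal invariance of $\rho$, respectively). Hence I may apply such an automorphism followed by a rotation to reduce to $x=0$ and $y=s\,e_1$ with $s={\rm th}(\rho_{\B^n}(x,y)/2)\in[0,1)$, where $e_1$ is the first coordinate vector; then $\rho_{\B^n}(x,y)=\log\frac{1+s}{1-s}$ by \eqref{formula_rhoB}. Since $|a|=1$ on $\partial\B^n=S^{n-1}$, the cross-ratio becomes $|a-b|\,s/|b-se_1|$, and I maximize it by first taking $a=-b$ (so $|a-b|=2$) and then $b=e_1$ (so $|b-se_1|=1-s$), which gives $\sup_{a,b}|a,x,b,y|=2s/(1-s)$ and therefore $\delta_{\B^n}(x,y)=\log(1+\frac{2s}{1-s})=\log\frac{1+s}{1-s}=\rho_{\B^n}(x,y)$. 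I expect the main obstacle to be the lower bound $j_G\le\delta_G$ in (1): unlike the upper bound it is not a uniform estimate but requires exhibiting boundary points that realize the ratio $|x-y|/m$, and the construction genuinely splits according to whether $G$ is bounded (use a ray beyond $y$) or unbounded (use $\infty\in\partial G$), a case distinction forced by the fact that $\delta_G$ is M\"obius invariant while $j_G$ is not.
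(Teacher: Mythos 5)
The paper offers no proof of this theorem to compare against: it is stated as a quoted result, with citations to \cite[Thm 5.16, p. 75]{hkv} and \cite[Cor. 3.8, p. 5]{seit}. Your argument is therefore judged on its own, and it is correct and self-contained; in effect you have reconstructed the proofs that live in the cited sources. In part (1), the upper bound via $|a-b|\le|a-x|+|x-y|+|y-b|$ and $|a-x|,|b-y|\ge m$, giving $|a,x,b,y|\le t^2+2t=(1+t)^2-1$, is the standard estimate, and your lower bound — nearest boundary point $a$ to the closer of $x,y$, then $b$ either the first exit point of the ray from $y$ in direction $y-a$ (bounded case, forcing $y\in[a,b]$ and hence $|a-b|\ge|b-y|$) or $b=\infty$ (unbounded case) — is the standard construction; the case split is genuinely needed, as you say. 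Your reduction of (2) and (3) to (1) plus the elementary comparison $j^*_G\le s_G\le 2j^*_G$ is exactly the route the paper itself takes elsewhere: in the proof of Theorem \ref{thm_ringIne} it cites \cite[Lemma 2.1, p. 1124 \& Lemma 2.2, p. 1125]{hvz} for precisely this two-sided bound, and your proofs of both halves (nearest-point test, and averaging $|x-z|+|z-y|\ge|x-y|$ with $|x-z|+|z-y|\ge 2m$) are right. Part (4) by normalizing to $x=0$, $y=se_1$ with a M\"obius automorphism and maximizing the cross-ratio $|a-b|s/|b-se_1|$ (the optimizations over $a$ and $b$ decouple, since $\sup_a|a-b|=2$ for every fixed $b$) is also the standard argument and gives $\log\frac{1+s}{1-s}=\rho_{\B^n}(0,se_1)$ as claimed.

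One small omission, not a real gap: in the upper bound of (1) your estimate is written for finite $a,b\in\partial G$, but if $G$ is unbounded the supremum defining $\delta_G$ also ranges over $a=\infty$ or $b=\infty$. In that case the cross-ratio degenerates (e.g. $|a,x,\infty,y|=|x-y|/|a-x|\le t$), so the bound $\log(1+|a,x,b,y|)\le 2\log(1+t)$ persists trivially; a sentence to this effect would close the argument completely.
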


\section{Triangular ratio metric in the annular ring}

In this section, we will study the triangular ratio metric in an annular ring $R(r,1)$ with $0<r<1$. In order to do this, we need to find a point $z$ from a certain circle that minimizes the sum $|x-z|+|z-y|$, when both the points $x,y$ are either inside or outside of the circle. This optimization problem has been much studied during its long history because the correct solution $z$ is also the point in which a light ray from the point $x$ must strike a spherical mirror to be reflected to the point $y$, see \cite{fhmv} and \cite[Rmk 1.4, p. 3]{fmv}. The next theorem follows from the law of reflection:

\begin{lemma}\label{lem_zbisectsXZY}
\emph{\cite[Rmk 2.2, p. 137 \& Rmk 2.7, 143]{fhmv}} 
Suppose that there are distinct points $x,y\in\R^n$ and the point $z\in S^{n-1}$ is chosen so that it gives the infimum $\inf_{z\in S^{n-1}}(|x-z|+|z-y|)$. If $x,y\in\B^n$, the line $L(0,z)$ bisects the angle $\measuredangle XZY$, see Figure \ref{fig0}. If $x,y\in\R^n\backslash\overline{\B}^n$ so that $[x,y]\cap\overline{\B}^n\neq\varnothing$, then $z\in[x,y]\cap S^{n-1}$. In the third case where $x,y\in\R^n\backslash\overline{\B}^n$ with $[x,y]\cap\overline{\B}^n=\varnothing$, the line $L(0,z)$ bisects the angle $\measuredangle XZY$, just like in the first case.
\end{lemma}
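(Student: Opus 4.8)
The plan is to treat this as the minimization of the continuous function $f(z)=|x-z|+|z-y|$ over the compact sphere $S^{n-1}$, so a minimizer exists automatically. First I would reduce the dimension: if $0,x,y$ are not collinear they span a $2$-plane $\Pi$, and the reflection across $\Pi$ fixes $x$, $y$ and $S^{n-1}$ while leaving $f$ invariant, so some minimizer lies in $\Pi$. This makes the angle $\measuredangle XZY$ and the line $L(0,z)$ unambiguous and lets me argue in the plane; the degenerate collinear case is handled directly, with the optimal $z$ lying on the common line through the origin.

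For the first and third cases the core is a first-order variational argument, which is precisely the law of reflection. Moving $z$ along the sphere by a tangent vector $w$ (so that $w\cdot z=0$) changes $f$ to first order by $\bigl(\tfrac{z-x}{|z-x|}+\tfrac{z-y}{|z-y|}\bigr)\cdot w$. At a minimizer this vanishes for every tangent $w$, which forces the vector $\tfrac{z-x}{|z-x|}+\tfrac{z-y}{|z-y|}$ to be parallel to the outward normal $z$ (equivalently, this is the Lagrange condition for the constraint $|z|^2=1$). Since the sum of two unit vectors lies along the bisector of the angle between their directions, $z$ points along the bisecting direction of $\measuredangle XZY$, and hence $L(0,z)$ bisects this angle; equivalently, the segments $[x,z]$ and $[z,y]$ make equal angles with the radius, as the reflection law predicts.

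The second case is elementary and needs no calculus: when $x,y\in\R^n\setminus\overline{\B}^n$ and $[x,y]\cap\overline{\B}^n\neq\varnothing$, the segment both enters and leaves $\overline{\B}^n$, so $[x,y]\cap S^{n-1}\neq\varnothing$. By the triangle inequality $|x-z|+|z-y|\geq|x-y|$ with equality exactly when $z\in[x,y]$, the infimum equals $|x-y|$ and is attained precisely at the points of $[x,y]\cap S^{n-1}$. By contrast, in the first case $[x,y]$ lies inside $\B^n$ and in the third case it is disjoint from $\overline{\B}^n$, so in both of those cases no minimizer can lie on the segment and the variational condition above genuinely applies.

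I expect the main obstacle to be verifying that the stationary point produced by the first-order condition is the global minimum rather than some other critical point, since a curved mirror can in principle admit several reflection points. I would resolve this by a convexity or monotonicity comparison of $f$ along the sphere (using that competing points violate the strict triangle inequality), together with a careful sign check confirming that it is the internal bisector, i.e.\ that $\tfrac{z-x}{|z-x|}+\tfrac{z-y}{|z-y|}$ is a positive multiple of $z$ in the interior case.
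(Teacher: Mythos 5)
The paper contains no proof of this lemma to compare against: it is quoted from \cite{fhmv} with the one-line remark that it follows from the law of reflection. Judged on its own, the core of your argument is correct and is precisely the standard law-of-reflection proof: a minimizer of $f(z)=|x-z|+|z-y|$ exists by compactness, at a minimizer the tangential derivative vanishes, giving the Lagrange condition $u_1+u_2=\lambda z$ with $u_1=(z-x)/|z-x|$, $u_2=(z-y)/|z-y|$, and since the sum of two unit vectors bisects the angle between their directions, $L(0,z)$ bisects $\measuredangle XZY$. Your treatment of the second case (triangle inequality, with equality exactly on $[x,y]$, which meets $S^{n-1}$) is complete as stated.

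Two repairs are needed, one real and one a matter of logic. (i) The planar reduction ``the reflection across $\Pi$ leaves $f$ invariant, so some minimizer lies in $\Pi$'' is a non sequitur: a reflection-invariant continuous function on $S^{n-1}$ need not have any minimizer on the fixed set of the reflection (the function $z\mapsto-|z_n|$ is invariant under $z_n\mapsto-z_n$, yet its only minimizers are the poles $\pm e_n$, off the equator). Fortunately the step is dispensable: the Lagrange identity itself gives coplanarity. Indeed $\lambda=0$ would force $u_1=-u_2$, i.e.\ $z$ to lie strictly between $x$ and $y$ on $[x,y]$, which is impossible in case 1 (where $[x,y]\subset\B^n$) and in case 3 (where $[x,y]\cap\overline{\B}^n=\varnothing$); hence $\lambda\neq0$, and then $0=z-\lambda^{-1}(u_1+u_2)$ writes $0$ as $z$ plus a linear combination of $z-x$ and $z-y$, so $0$ lies in the affine plane of $x,y,z$ and $L(0,z)$ genuinely is a bisecting line of the angle $\measuredangle XZY$. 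If you prefer to keep the reduction, replace symmetry by concavity: writing $z=p+q$ with $p\in\Pi$, $q\perp\Pi$, one gets $f(z)=\sqrt{1+|x|^2-2x\cdot p}+\sqrt{1+|y|^2-2y\cdot p}$, a concave function of $p$ on the closed unit disk of $\Pi$, and a concave function attains its minimum at extreme points, i.e.\ on $\Pi\cap S^{n-1}$. (ii) Your closing worry points the wrong way: the lemma asserts a property of the global minimizer, so the only implication needed is ``minimizer $\Rightarrow$ critical point $\Rightarrow$ bisection,'' which your first-order argument already delivers; you never need to certify that a critical point is the global minimum, so no convexity or monotonicity comparison along the sphere is required. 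Likewise the sign check is not needed for the statement as given: the internal bisector ray from $z$ has direction $-(u_1+u_2)=-\lambda z$, which lies on the line $L(0,z)$ whatever the sign of $\lambda$; all that matters is $\lambda\neq0$, verified above.
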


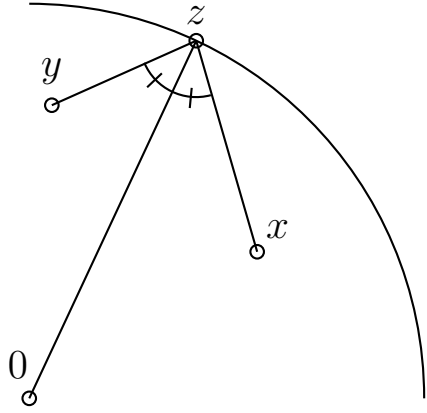
\begin{figure}[ht]
    \centering
    \begin{tikzpicture}[scale=3]
    \draw[thick] (1.75,0) arc (0:90:1.75cm);
    \draw[thick] (0,0) -- (0.74,1.586);
    \draw[thick] (0.1,1.3) -- (0.74,1.586);
    \draw[thick] (1.01,0.65) -- (0.74,1.586);
    \draw[rotate around={203:(0.74,1.586)}, thick] (0.74+0.25,1.586) arc (0:82:0.25cm);
    \draw[rotate around={203+20.5:(0.74,1.586)}, thick] (0.74+0.21,1.586) -- (0.74+0.3,1.586);
    \draw[rotate around={203+61.5:(0.74,1.586)}, thick] (0.74+0.21,1.586) -- (0.74+0.3,1.586);
    \draw[thick] (0,0) circle (0.03cm);
    \draw[thick] (1.01,0.65) circle (0.03cm);
    \draw[thick] (0.1,1.3) circle (0.03cm);
    \draw[thick] (0.74,1.586) circle (0.03cm);
    \node[scale=1.3] at (-0.05,0.15) {$0$};
    \node[scale=1.3] at (1.1,0.75) {$x$};
    \node[scale=1.3] at (0.1,1.45) {$y$};
    \node[scale=1.3] at (0.74,1.7) {$z$};
    \end{tikzpicture}
    \caption{If the point $z$ gives the infimum $\inf_{z\in S^1}(|x-z|+|z-y|)$ for $x,y\in\B^2$, then the line $L(0,z)$ bisects the angle $\measuredangle XZY$.}
    \label{fig0}
\end{figure}

While there is no explicit formula for the point $z$ defining the infimum $\inf_{z\in\partial R(r,1)}(|x-z|+|z-y|)$, it can be solved from the quartic equation presented below.

\begin{theorem}
Consider the annular ring domain $R(r,1)$ with $0<r<1$. Let $x,y\in R(r,1)$ and choose $z$ from the boundary of $R(r,1)$ so that it gives the infimum $\inf_{z\in\partial R(r,1)}(|x-z|+|z-y|)$. Then $z\in[x,y]\cap S^1(0,r)$ if $[x,y]\cap\overline{B}^2(0,r)\neq\varnothing$, and otherwise $z$ fulfills the equality
\begin{align*}
\overline{x}\overline{y}z^4-j^2(\overline{x}+\overline{y})z^3+j^4(x+y)z-j^4xy=0    
\end{align*}
with either $j=r$ or $j=1$.
\end{theorem}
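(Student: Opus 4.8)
The plan is to reduce the problem on each of the two boundary circles $S^1(0,r)$ and $S^1$ to the bisection property of Lemma~\ref{lem_zbisectsXZY}, and then to convert that geometric property into the stated quartic by a short complex-number manipulation. Since $\partial R(r,1)=S^1(0,r)\cup S^1$ is compact and $z\mapsto|x-z|+|z-y|$ is continuous, the infimum is attained at some $z$ lying on one of these two circles; whichever circle contains the global minimizer, that point also minimizes the sum over that single circle, so Lemma~\ref{lem_zbisectsXZY} applies to it after rescaling the circle to the unit sphere.

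First I would dispose of the case $[x,y]\cap\overline{B}^2(0,r)\neq\varnothing$. Because $x,y\in R(r,1)$ lie outside $\overline{B}^2(0,r)$, the second alternative of Lemma~\ref{lem_zbisectsXZY} (applied to $S^1(0,r)$ via the scaling $w\mapsto w/r$) forces the minimizer over the inner circle to be a point of $[x,y]\cap S^1(0,r)$, at which $|x-z|+|z-y|=|x-y|$. As $|x-y|$ is the smallest value the sum can possibly take, this is automatically the infimum over the whole boundary, which proves the first assertion. In the remaining case $[x,y]\cap\overline{B}^2(0,r)=\varnothing$ the minimizer on each circle satisfies a bisection condition: on the outer circle $S^1$ we have $x,y\in\B^2$, so the first alternative of Lemma~\ref{lem_zbisectsXZY} applies, while on the inner circle $S^1(0,r)$ the hypothesis $[x,y]\cap\overline{B}^2(0,r)=\varnothing$ puts us in the third alternative; in both situations the line $L(0,z)$ bisects $\measuredangle XZY$, with $|z|=j$ where $j=1$ for the outer circle and $j=r$ for the inner one. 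It therefore remains only to show that this bisection property is equivalent to the quartic with the corresponding value of $j$.

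The key step is to encode the bisection algebraically. Writing the reflection across the line $L(0,z)$ as $w\mapsto (z/\overline{z})\,\overline{w}$, the line $L(0,z)$ bisects $\measuredangle XZY$ precisely when the reflected direction $(z/\overline{z})(\overline{x}-\overline{z})$ of $x-z$ is parallel to $y-z$, i.e. when
\begin{align*}
\frac{z(\overline{x}-\overline{z})}{\overline{z}(y-z)}\in\R.
\end{align*}
Equating this quantity to its complex conjugate gives $z^2(\overline{x}-\overline{z})(\overline{y}-\overline{z})=\overline{z}^{\,2}(x-z)(y-z)$, and substituting $\overline{z}=j^2/z$ (valid since $|z|=j$) together with clearing denominators turns this identity into
\begin{align*}
\overline{x}\overline{y}z^4-j^2(\overline{x}+\overline{y})z^3+j^4(x+y)z-j^4xy=0,
\end{align*}
which is the asserted equation.

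The main obstacle is precisely this translation of the reflection law into complex form and the bookkeeping of the substitution $\overline{z}=j^2/z$; once the correct ``ratio is real'' condition is written down, the simplification to the quartic is routine expansion. I would also note that passing to the ``ratio is real'' condition uses only that the reflected ray is parallel (not necessarily co-directional) to $y-z$, so the derivation needs no separate discussion of orientation, and that the theorem asserts only that the minimizer \emph{fulfills} the equation for one of $j=r$, $j=1$, so no comparison of the two candidate minima is required.
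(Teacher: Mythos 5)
Your proof is correct and takes essentially the same approach as the paper: both reduce to the bisection property of Lemma \ref{lem_zbisectsXZY} (with the first case dispatched by the triangle inequality) and then convert bisection into the quartic by equating a real ratio to its complex conjugate, your condition $z^2(\overline{x}-\overline{z})(\overline{y}-\overline{z})=\overline{z}^{\,2}(x-z)(y-z)$ being identical to the paper's. The only cosmetic differences are that you encode bisection via the reflection map $w\mapsto(z/\overline{z})\overline{w}$ rather than the argument equality $\arg((z-x)/z)=\arg(z/(z-y))$, and you simplify by substituting $\overline{z}=j^2/z$ where the paper expands and uses $z\overline{z}=|z|^2$.
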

\begin{proof}
The first part follows trivially from the triangle inequality. If $x,y\in R(r,1)$ so that $[x,y]\cap\overline{B}^2(0,r)=\varnothing$ instead, then by Lemma \ref{lem_zbisectsXZY}, $z$ bisects the angle $\measuredangle XZY$. As in \cite[(2.1), p. 138]{fhmv}, $z$ bisects $\measuredangle XZY$ if and only if
\begin{align*}
&\arg\left(\frac{z-x}{z}\right)=\arg\left(\frac{z}{z-y}\right)
\quad\Leftrightarrow\quad
\arg\left(\frac{z-x}{z}\cdot\frac{z-y}{z}\right)=0\\
&\Leftrightarrow\quad
\frac{(z-x)(z-y)}{z^2}=\frac{(\overline{z}-\overline{x})(\overline{z}-\overline{y})}{\overline{z}^2}
\quad\Leftrightarrow\quad
\overline{z}^2(z-x)(z-y)=z^2(\overline{z}-\overline{x})(\overline{z}-\overline{y})\\
&\Leftrightarrow\quad
\overline{z}^2z^2-(x+y)\overline{z}^2z+xy\overline{z}^2=
\overline{z}^2z^2-(\overline{x}+\overline{y})\overline{z}z^2+\overline{x}\overline{y}z^2\\
&\Leftrightarrow\quad
\overline{x}\overline{y}z^2-|z|^2(\overline{x}+\overline{y})z+|z|^2(x+y)\overline{z}-xy\overline{z}^2=0\\
&\Leftrightarrow\quad
\overline{x}\overline{y}z^4-|z|^2(\overline{x}+\overline{y})z^3+|z|^4(x+y)z-|z|^4xy=0.
\end{align*}
Above, the last equivalences follows from the fact that $\overline{z}z=|z|^2$. Since $|z|=r$ or $|z|=1$, when $z\in\partial R(r,1)$, the result follows.
\end{proof}

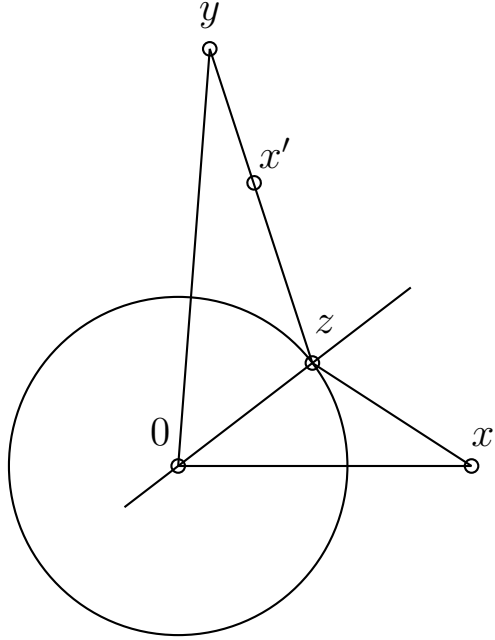
\begin{figure}[ht]
    \centering
    \begin{tikzpicture}[scale=3]
    \draw[thick] (0,0) circle (0.75cm);
    \draw[thick] (0,0) -- (1.3,0);
    \draw[thick] (1.3,0) circle (0.03cm);
    \draw[thick] (0,0) circle (0.03cm);
    \draw[thick] (0,0) -- (0.14,1.85);
    \draw[thick, rotate=75] (1.3,0) circle (0.03cm);
    \draw[thick] (0.14,1.85) circle (0.03cm);
    \draw[thick] (-0.3*0.793,-0.3*0.609) -- (1.3*0.793,1.3*0.609);
    \draw[thick] (0.75*0.793,0.75*0.609) circle (0.03cm);
    \draw[thick] (1.3,0) -- (0.75*0.793,0.75*0.609) -- (0.14,1.85);
    \node[scale=1.3] at (-0.08,0.15) {$0$};
    \node[scale=1.3] at (1.35,0.13) {$x$};
    \node[scale=1.3] at (0.14,2) {$y$};
    \node[scale=1.3] at (0.65,0.63) {$z$};
    \node[scale=1.3] at (0.43,1.39) {$x'$};
    \end{tikzpicture}
    \caption{The line $L(0,z)$ bisects the angle $\measuredangle XZY$, if the point $y$ is on the line $L(z,x')$, where $x'$ is the point $x$ reflected over $L(0,z)$.}
    \label{fig1}
\end{figure}

If the points $x$ and $z$ are fixed instead, finding a point $y$ such that $z$ gives the infimum $\inf_{z\in\partial S^1}(|x-z|+|z-y|)$ is a very simple task and this is often useful when generating points with certain values of the triangular ratio metric.

\begin{proposition}
If $r>0$, $x,y\in\R^2$ with $[x,y]\cap B^2(0,r)=\varnothing$ and the line $L(0,z)$ through the point $z=re^{ui}\in S^1(0,r)$ bisects the angle $\measuredangle XZY$, then there is some $R>0$ such that $y=e^{ui}(R|x|e^{(2u-\mu)i}+1-R)$, where $\mu=\arg(x)$. 
\end{proposition}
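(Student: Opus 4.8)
The plan is to convert the angle-bisector hypothesis into a collinearity statement through the law of reflection, and then to read off the position of $y$ by writing the reflected point in polar form. Since $[x,y]\cap B^2(0,r)=\varnothing$, the points $x,y$ lie strictly outside the closed disk and the segment joining them misses it, so we are in the bisecting case of Lemma \ref{lem_zbisectsXZY} rather than the degenerate case $z\in[x,y]$. As recorded in Figure \ref{fig1}, the line $L(0,z)$ then bisects $\measuredangle XZY$ precisely when the reflection $x'$ of $x$ in the line $L(0,z)$ is collinear with $z$ and $y$, with $y$ lying on the ray issuing from $z$ in the direction of $x'$. It therefore suffices to compute $x'$ explicitly and to parametrize the line $L(z,x')$.

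First I would compute the reflection in closed form. Reflection in the line through the origin of direction $e^{ui}$ is the map $w\mapsto e^{2ui}\overline{w}$, as one sees by conjugating the reflection $w\mapsto\overline{w}$ in the real axis by the rotation $w\mapsto e^{-ui}w$. Writing $x=|x|e^{\mu i}$ with $\mu=\arg(x)$ thus gives $x'=e^{2ui}\overline{x}=|x|e^{(2u-\mu)i}$.

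It then remains to parametrize $L(z,x')$. Writing a point of this line as the affine combination $y=(1-R)z+Rx'$ with $R\in\R$ and substituting $z=re^{ui}$ together with $x'=|x|e^{(2u-\mu)i}$ yields, after simplification, the closed form for $y$ recorded in the statement. The one step that requires genuine argument rather than algebra is the claim that one may take $R>0$: this is exactly the assertion that $y$ lies on the $x'$-side of $z$, i.e.\ on the reflected ray and not on its opposite extension, and I expect this sign to be the main obstacle. The clean way to settle it is to use, as in the proof of the preceding theorem, that bisection of $\measuredangle XZY$ is equivalent to $\tfrac{z-x}{z}$ and $\tfrac{z}{z-y}$ having equal argument; hence their quotient $\tfrac{(z-x)(z-y)}{z^2}$ is a positive real number. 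Transporting this positivity by the rotation $w\mapsto e^{-ui}w$, which carries $z$ to the positive real number $r$ and $x'$ to $\overline{e^{-ui}x}$, shows that the affine parameter $R$ is a positive multiple of that quotient, so $R>0$. Once this is established, the displayed formula follows from the routine substitution above.
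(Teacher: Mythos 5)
Your proposal is correct and follows essentially the same route as the paper's own proof: reflect $x$ over $L(0,z)$ to get $x'=|x|e^{(2u-\mu)i}$, note that bisection of $\measuredangle XZY$ is equivalent to $y$ lying on the ray from $z$ through $x'$, and write $y=Rx'+(1-R)z$ with $R>0$, where your derivation of the sign of $R$ from the positivity of $(z-x)(z-y)/z^2$ is a more rigorous version of the paper's appeal to Figure \ref{fig1}. Be aware, though, that the substitution actually yields $y=R|x|e^{(2u-\mu)i}+(1-R)re^{ui}$, which agrees with the displayed formula of the statement only up to an apparent typo in the paper itself (the statement's extra factor $e^{ui}$ and its use of $r$ where the paper's proof normalizes $z=e^{ui}$), so your closing claim that the algebra "yields the closed form recorded in the statement" inherits exactly the same imprecision as the paper's "from which the result follows."
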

\begin{proof}
Since $x=|x|e^{\mu i}$ and $z=e^{ui}$, the point $x$ reflected over the line $L(0,z)$ is $x'=|x|e^{(2u-\mu)i}$. Now, the point $z$ bisects the angle $\measuredangle XZY$ if and only if $y\in L(z,x')$ so that $y$ is on the same side of the line $L(0,z)$ than $x'$, see Figure \ref{fig1}. Consequently, there must be some $R>0$ such that $y=R(x'-z)+z$, from which the result follows.
\end{proof}

While finding the value of the triangular ratio distance between points $x,y\in R(r,1)$ in the general case is quite difficult, there are explicit formulas for this metric if the points $x,y$ are either collinear with the origin or at the same distance from the origin.  

\begin{proposition}\label{prop_sRingColli}
If $x,y\in R(r,1)$ so that $\arg(x)=\arg(y)$, then 
\begin{align*}
s_{R(r,1)}(x,y)=\frac{|x-y|}{\min\{2-|x+y|,|x+y|-2r\}}.    
\end{align*}
\end{proposition}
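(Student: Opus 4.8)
The plan is to exploit the rotational symmetry of the configuration and then reduce the infimum to a one-variable minimization over each of the two boundary circles separately. Since the triangular ratio metric is invariant under similarity mappings and a rotation about the origin maps $R(r,1)$ onto itself, I would first rotate so that both points lie on the positive real axis, writing $x=a$ and $y=b$ with $r<a,b<1$. The quantities in the claimed formula are unchanged by this rotation, because $|x+y|=a+b$ and $|x-y|=|a-b|$. As $\partial R(r,1)=S^1(0,r)\cup S^1(0,1)$, the infimum defining $s_{R(r,1)}$ splits as
\begin{align*}
\inf_{z\in\partial R(r,1)}(|x-z|+|z-y|)=\min\Big\{\inf_{z\in S^1}(|x-z|+|z-y|),\ \inf_{z\in S^1(0,r)}(|x-z|+|z-y|)\Big\},
\end{align*}
so it suffices to treat each circle on its own.

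For the outer circle I would parametrize $z=e^{i\theta}$ and write the sum as $\sqrt{a^2-2a\cos\theta+1}+\sqrt{b^2-2b\cos\theta+1}$. Each summand is a decreasing function of $\cos\theta$, so the whole expression is minimized when $\cos\theta$ is largest, i.e. at $\theta=0$, which is precisely the point $z=1$ where the common ray of $x,y$ meets $S^1$; since $a,b<1$ this gives $(1-a)+(1-b)=2-|x+y|$. The identical computation on the inner circle, with $z=re^{i\theta}$, produces $\sqrt{a^2-2ar\cos\theta+r^2}+\sqrt{b^2-2br\cos\theta+r^2}$, again minimized at $\theta=0$, i.e. at the nearest point $z=r$; because $a,b>r$ this yields $(a-r)+(b-r)=|x+y|-2r$. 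Taking the smaller of the two values gives the infimum $\min\{2-|x+y|,\,|x+y|-2r\}$, and hence the stated formula for $s_{R(r,1)}(x,y)$.

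As a consistency check I would confirm that these minimizers agree with Lemma \ref{lem_zbisectsXZY}: both points lie in $\B^2$, so on $S^1$ the minimizer is the angle-bisector point, which in the collinear case degenerates to $z=1$ on the axis; on $S^1(0,r)$ we have $[x,y]\cap\overline{B}^2(0,r)=\varnothing$, so the third case applies and the bisector again degenerates to $z=r$. The computation itself is elementary, so the only point requiring genuine care is the justification that the minimum over each circle is attained at the point lying on the common ray rather than elsewhere. I expect this to be the main (if modest) obstacle, and it is dispatched cleanly by the monotonicity of each summand in $\cos\theta$: this monotonicity argument sidesteps having to solve the quartic bisection equation and renders the degenerate zero-angle configuration harmless.
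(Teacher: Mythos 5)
Your proof is correct, and it reaches the formula by a slightly different mechanism than the paper. Both arguments begin with the same reduction (rotate so that $x,y\in[0,1]\cap R(r,1)$, which changes neither side of the claimed identity) and end with the same arithmetic: the candidate sums are $2-|x+y|$ at $z=1$ and $|x+y|-2r$ at $z=r$. The difference is in how the minimizing boundary point is located. The paper simply invokes Lemma \ref{lem_zbisectsXZY} to assert that $z=r$ or $z=1$, whereas you split the infimum over the two circles and prove directly, by writing each summand as $\sqrt{a^2-2a\lambda\cos\theta+\lambda^2}$ (with $\lambda=1$ or $\lambda=r$) and observing monotonicity in $\cos\theta$, that the minimum on each circle occurs at $\theta=0$. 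Your route is more elementary and, arguably, more airtight for this degenerate configuration: for collinear points the angle $\measuredangle XZY$ is zero at the axis points, the bisection condition is satisfied there only trivially, and the associated quartic has further roots on the boundary (for instance $z=-1$, $z=-r$, and on the inner circle possibly non-real ones) which are maximizers rather than minimizers, so the paper's citation of the lemma implicitly requires comparing several candidates. Your monotonicity argument eliminates all of them at once without solving or even mentioning the bisection equation; what it gives up is only brevity and reuse of machinery the paper has already set up.
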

\begin{proof}
Without loss of generality, we can fix $x,y\in[0,1]\cap R(r,1)$. By Lemma \ref{lem_zbisectsXZY}, $z=r$ or $z=1$. Furthermore, the sum $|x-z|+|z-y|$ is now $2|z-|x+y|\slash2|=|2z-|x+y||$, from which the result follows.
\end{proof}

\begin{theorem}\label{thm_sForConjugate}
\emph{\cite[Thm 3.1, p. 276]{hkvz}} If $x=h+ki\in\B^2$ with $h,k>0$, then
\begin{align*}
s_{\B^2}(x,\overline{x})&=|x|\text{ if }|x-\frac{1}{2}|>\frac{1}{2},\\
s_{\B^2}(x,\overline{x})&=\frac{k}{\sqrt{(1-h)^2+k^2}}\leq|x|\text{ otherwise.}
\end{align*}
\end{theorem}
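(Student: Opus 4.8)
The numerator is immediate: $|x-\overline{x}|=2k$, so the entire content of the proof is the determination of the denominator $\inf_{z\in S^1}(|x-z|+|z-\overline{x}|)$. Writing $z=e^{i\theta}$ and abbreviating $\rho=|x|^2=h^2+k^2$, I would work with
\begin{align*}
f(\theta)=|x-e^{i\theta}|+|\overline{x}-e^{i\theta}|=\sqrt{A-2k\sin\theta}+\sqrt{A+2k\sin\theta},\qquad A=\rho+1-2h\cos\theta,
\end{align*}
which is even in $\theta$, so it suffices to minimize over $\theta\in[0,\pi]$. The reflection symmetry of the pair $\{x,\overline{x}\}$ across the real axis already singles out $z=1$ (where $L(0,z)$ bisects $\measuredangle XZY$, as in Lemma~\ref{lem_zbisectsXZY}) as a critical point, with $f(0)=2\sqrt{(1-h)^2+k^2}$; the whole point is to decide when this is the genuine minimum. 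I would also record the elementary rewriting $|x-\tfrac12|>\tfrac12\iff|x-\tfrac12|^2=\rho-h+\tfrac14>\tfrac14\iff\rho>h$, which is exactly the dichotomy governing the two formulas.

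Next I would locate the interior critical points using $g=f^2=2A+2\sqrt{A^2-4k^2\sin^2\theta}$. Differentiating and clearing the square root factors out $\sin\theta$ (recovering $\theta=0,\pi$) together with the quadratic
\begin{align*}
\rho\cos^2\theta-h(\rho+1)\cos\theta+h^2=0,
\end{align*}
whose discriminant is the perfect square $h^2(1-\rho)^2$, giving the two roots $\cos\theta=h$ and $\cos\theta=h/\rho$. The root $\cos\theta=h$ is spurious, introduced by squaring: the unsquared stationarity relation forces $\cos\theta\ge h(\rho+1)/(2\rho)$, and $\cos\theta=h$ satisfies this only if $\rho\ge1$, which is impossible in $\B^2$. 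The root $\cos\theta=h/\rho$ is admissible precisely when $h/\rho\le1$, that is exactly in the regime $\rho\ge h$. A short computation then gives $\sqrt{A^2-4k^2\sin^2\theta}=1-\rho$ and $A=(\rho^2+\rho-2h^2)/\rho$ at this off-axis point, whence $g=4k^2/\rho$, so $f=2k/|x|$ and $s_{\B^2}(x,\overline{x})=2k/f=|x|$.

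Finally I would settle the global minimum by a local test at $\theta=0$: expanding near $0$ gives $g'(\theta)\sim 8\theta\,(h-k^2/A(0))$ with $A(0)=(1-h)^2+k^2$, and one checks $h-k^2/A(0)>0\iff h>\rho$. Hence $z=1$ is a local minimum exactly when $\rho<h$ and a local maximum when $\rho>h$, dovetailing with the existence range of the off-axis root. If $\rho\le h$, the only interior critical points are $\theta=0,\pi$ with $f(0)<f(\pi)$, so the infimum is $f(0)$ and $s_{\B^2}(x,\overline{x})=k/\sqrt{(1-h)^2+k^2}$; if $\rho>h$, the symmetric off-axis pair furnishes the minimum and $s_{\B^2}(x,\overline{x})=|x|$. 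The claimed inequality $k/\sqrt{(1-h)^2+k^2}\le|x|$ is then equivalent, after substituting $k^2=\rho-h^2$, to $0\le(h-\rho)^2$, and the two formulas agree on the circle $\rho=h$. The main obstacle is precisely the global (not merely local) optimization: one must certify that the stationary value $2k/|x|$ is an actual minimum rather than a spurious critical point, and correctly match the admissibility range $\rho\ge h$ of the off-axis solution to the geometric condition $|x-\tfrac12|>\tfrac12$; the discriminant and sign bookkeeping above are what make this matching clean.
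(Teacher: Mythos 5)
Your proof is correct, but there is nothing in the paper to compare it against: the paper does not prove this statement, it imports it verbatim (with the citation \cite[Thm 3.1, p. 276]{hkvz}), so your argument is an independent, self-contained verification rather than a variant of an in-paper proof. I checked the computations and they hold up: with $\rho=|x|^2$, clearing the radical in the stationarity condition for $g=f^2=2A+2\sqrt{A^2-4k^2\sin^2\theta}$ does yield $\rho\cos^2\theta-h(\rho+1)\cos\theta+h^2=0$, whose discriminant is $h^2(1-\rho)^2$ with roots $\cos\theta=h$ and $\cos\theta=h/\rho$; the unsquared relation forces $\cos\theta\geq h(\rho+1)/(2\rho)$, which rules out $\cos\theta=h$ exactly because $\rho<1$; at $\cos\theta=h/\rho$ one indeed gets $\sqrt{A^2-4k^2\sin^2\theta}=1-\rho$, $g=4k^2/\rho$ and hence the value $|x|$; the expansion $g'(\theta)\approx 8\theta\bigl(h-k^2/((1-h)^2+k^2)\bigr)$ is right, and $h-k^2/((1-h)^2+k^2)>0$ is equivalent (after factoring out $1-h>0$) to $h>\rho$, which matches both the admissibility threshold of the off-axis root and the geometric dichotomy $|x-\tfrac12|>\tfrac12\iff\rho>h$; finally, $k/\sqrt{(1-h)^2+k^2}\leq|x|$ does reduce to $(h-\rho)^2\geq 0$. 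Your global argument is also sound and worth stating slightly more explicitly: writing $g'(\theta)=4\sin\theta\,B(\theta)$, the factor $B$ is continuous on $(0,\pi)$ and can vanish only at admissible roots of the quadratic, so its sign is constant on the complementary subintervals; combined with the sign of $B$ at $0$ and at $\pi$ (where $B(\pi)>0$ always), this pins down the unique global minimizer in each regime, since $f$ certainly attains its infimum on the compact circle $S^1$. Two cosmetic points: $\theta=0,\pi$ are symmetry points of the even function $g$, not ``interior'' critical points; and the boundary case $\rho=h$ should be flagged as covered because the two formulas coincide there, which you do note.
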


\begin{lemma}\label{lem_sRingConjugate}
For all $x,y\in R(r,1)$ such that $|x|=|y|=h$ and $\mu\in(0,\pi)$ is the value of the angle $\measuredangle XOY$,
\begin{align*}
&s_{R(r,1)}(x,y)=1
\quad\text{if}\quad
\cos(\mu\slash2)<\frac{r}{h},\\
&s_{R(r,1)}(x,y)=\max\left\{h,\frac{h\sin(\mu\slash2)}{\sqrt{h^2+r^2-2hr\cos(\mu\slash2)}}\right\}
\quad\text{if}\quad
\frac{r}{h}\leq\cos(\mu\slash2)\leq h,\\
&s_{R(r,1)}(x,y)=\frac{h\sin(\mu\slash2)}{\sqrt{h^2+r^2-2hr\cos(\mu\slash2)}}
\quad\text{if}\quad
\max\left\{\frac{r}{h},h\right\}\leq\cos(\mu\slash2)\leq\frac{1+r}{2h},\\
&s_{R(r,1)}(x,y)=\frac{h\sin(\mu\slash2)}{\sqrt{1+h^2-2h\cos(\mu\slash2)}}
\quad\text{if}\quad
\cos(\mu\slash2)>\max\left\{\frac{1+r}{2h},h\right\}.
\end{align*}
\end{lemma}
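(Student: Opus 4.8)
The plan is to place the two points symmetrically and to split the boundary circle by circle. After a rotation, which preserves $s_{R(r,1)}$ since it is a similarity, I may assume $x=he^{i\mu/2}$ and $y=\overline{x}=he^{-i\mu/2}$, so that $|x-y|=2h\sin(\mu/2)$ and the whole configuration is symmetric about the real axis. Because $\partial R(r,1)=S^1\cup S^1(0,r)$ is a disjoint union, the defining infimum factorises as
\[
\inf_{z\in\partial R(r,1)}(|x-z|+|z-y|)=\min\Big\{\inf_{z\in S^1}(|x-z|+|z-y|),\ \inf_{z\in S^1(0,r)}(|x-z|+|z-y|)\Big\},
\]
so that $s_{R(r,1)}(x,y)=\max\{O,I\}$, where $O$ and $I$ are the triangular ratio quotients coming from the outer and the inner circle. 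The task then reduces to evaluating $O$ and $I$ separately and comparing them.

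For the outer quotient I would note that $\inf_{z\in S^1}(|x-z|+|z-y|)$ is exactly the denominator in $s_{\B^2}(x,\overline{x})$, so $O=s_{\B^2}(x,\overline{x})$ is given verbatim by Theorem \ref{thm_sForConjugate}. Rewriting its condition $|x-\tfrac12|>\tfrac12$ in polar form as $\cos(\mu/2)<h$, this yields $O=h$ when $\cos(\mu/2)<h$ and $O=h\sin(\mu/2)/\sqrt{1+h^2-2h\cos(\mu/2)}$ when $\cos(\mu/2)\ge h$, the two branches agreeing at $\cos(\mu/2)=h$.

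For the inner quotient the naive guess that the minimiser lies at $z=r$ on the real axis is false in general: the bisecting condition of Lemma \ref{lem_zbisectsXZY}, in the present symmetric situation, becomes a quartic that over $S^1(0,r)$ is proportional to $(z^2-r^2)(hz^2-2r^2\cos(\mu/2)z+r^2h)$, and the second factor always contributes a conjugate pair of critical points on $S^1(0,r)$ (its discriminant is negative because $r\cos(\mu/2)\le r<h$), so these must be shown to be spurious for the minimisation. This is the main obstacle, and I would bypass it entirely by inverting in $S^1(0,r)$. Writing $\phi(w)=r^2/\overline{w}$ and using $|\phi(a)-\phi(b)|=r^2|a-b|/(|a||b|)$ together with $\phi(z)=z$ for $z\in S^1(0,r)$, one obtains $|x-z|+|z-y|=(h/r)(|x^*-z|+|z-y^*|)$ with $x^*=\phi(x)=(r^2/h)e^{i\mu/2}$ and $y^*=\overline{x^*}$, both now inside $B^2(0,r)$. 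Hence the inner infimum is $h/r$ times the denominator of $s_{B^2(0,r)}(x^*,\overline{x^*})$, and collapsing the prefactors gives $I=(h/r)\,s_{\B^2}((r/h)e^{i\mu/2},(r/h)e^{-i\mu/2})$. Applying Theorem \ref{thm_sForConjugate} a second time, now with modulus $r/h$ and threshold $\cos(\mu/2)$ against $r/h$, produces $I=1$ when $\cos(\mu/2)<r/h$ and $I=h\sin(\mu/2)/\sqrt{h^2+r^2-2hr\cos(\mu/2)}$ when $\cos(\mu/2)\ge r/h$.

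Finally I would assemble the four regimes from $s_{R(r,1)}(x,y)=\max\{O,I\}$. When $\cos(\mu/2)<r/h$ the inner quotient is $1$, forcing $s_{R(r,1)}(x,y)=1$; this is the first case. Otherwise both $O$ and $I$ have the closed forms above, and the elementary computation
\[
I\ge O\iff h^2+r^2-2hr\cos(\mu/2)\le 1+h^2-2h\cos(\mu/2)\iff \cos(\mu/2)\le\frac{1+r}{2h},
\]
where dividing by $r-1<0$ reverses the inequality, locates the crossover. Combining this with the two branches of $O$ gives $s_{R(r,1)}(x,y)=\max\{h,I\}$ on $r/h\le\cos(\mu/2)\le h$ (second case); $s_{R(r,1)}(x,y)=I$ on $\max\{r/h,h\}\le\cos(\mu/2)\le(1+r)/(2h)$, where $I\ge O$ (third case); and $s_{R(r,1)}(x,y)=O$ once $\cos(\mu/2)>\max\{(1+r)/(2h),h\}$, where $O>I$ (fourth case). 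The $\max$ in the lower thresholds merely records that one must simultaneously be past the first case ($\cos(\mu/2)\ge r/h$) and in the non-constant branch of $O$ ($\cos(\mu/2)\ge h$). I would close by checking that the pieces agree at $\cos(\mu/2)=r/h$ and $\cos(\mu/2)=h$, confirming continuity across the regimes.
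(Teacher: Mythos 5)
Your argument is correct, and it reaches exactly the paper's case analysis, but by a genuinely different route for the inner circle. The paper's proof shares your normalization $x=he^{\mu i/2}$, $y=\overline{x}$, the splitting $s_{R(r,1)}(x,y)=\max\{s_{\B^2}(x,y),\,s_{\R^2\backslash\overline{B}^2(0,r)}(x,y)\}$, the application of Theorem \ref{thm_sForConjugate} to the outer circle, and the same comparison of the two denominators producing the threshold $(1+r)/(2h)$. For the inner circle, however, the paper simply invokes Lemma \ref{lem_zbisectsXZY}: the minimizing $z$ must have the radial line $L(0,z)$ bisecting $\measuredangle XZY$, and by the symmetry of the configuration this point is $z=r$, after which the law of cosines gives the formula with denominator $\sqrt{h^2+r^2-2hr\cos(\mu/2)}$; the regime $h\cos(\mu/2)\leq r$ is disposed of separately by the triangle inequality. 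You instead invert in $S^1(0,r)$, using that this circle is pointwise fixed and $|x-z|=(h/r)|x^*-z|$ on it, and then quote Theorem \ref{thm_sForConjugate} a second time for the reflected points of modulus $r/h$. This is clean and self-contained: it sidesteps any discussion of which solution of the bisection condition is the true minimizer, and it produces the first case ($I=1$ when $\cos(\mu/2)<r/h$) automatically rather than as a separate observation. The price is that it is special to this radially symmetric situation, whereas the reflection-law lemma the paper leans on works for arbitrary point pairs (and is what the rest of Section 3 uses, e.g.\ in Theorem \ref{thm_emrForS}).

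One parenthetical claim of yours is wrong, though harmlessly so, since your inversion bypasses it: the minimizer of the inner sum \emph{is} $z=r$ whenever $[x,y]$ misses $\overline{B}^2(0,r)$ --- indeed your own final formula $I=h\sin(\mu/2)/\sqrt{h^2+r^2-2hr\cos(\mu/2)}=|x-y|/(2|x-r|)$ asserts precisely that the infimum is attained at $z=r$. The conjugate roots $z_\pm=\bigl(r^2\cos(\mu/2)\pm ir\sqrt{h^2-r^2\cos^2(\mu/2)}\,\bigr)/h$ of your second factor are not critical points of $z\mapsto|x-z|+|z-y|$ at all: at these points $(z-x)(z-y)/z^2$ is a \emph{negative} real number, meaning the radial line is the external rather than the internal bisector of $\measuredangle XZY$. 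They are spurious roots, created because the quartic only records that $(z-x)(z-y)/z^2$ is real, not that its argument is $0$. So the paper's ``naive'' identification of $z=r$ is sound; what your observation really shows is that the quartic of Theorem 3.2 needs this sign check before one can read minimizers off from it.
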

\begin{proof}
Fix $x=e^{\mu i\slash2}=h(\cos(\mu\slash2)+\sin(\mu\slash2)i)$ and $y=\overline{x}=e^{-\mu i\slash2}$ without loss of generality. If $h\cos(\mu\slash2)\leq r$, then $[x,y]\cap S^1(0,r)\neq\varnothing$ and trivially $s_{R(r,1)}(x,y)=1$. Suppose below that $\cos(\mu\slash2)<r\slash h$ instead. Clearly,
\begin{align*}
s_{R(0,r)}(x,y)=\max\{s_{\R^2\backslash\overline{B}^2(0,r)}(x,y),s_{\B^2}(x,y)\}.
\end{align*}
By Lemma \ref{lem_zbisectsXZY}, the line $L(0,z)$ through the point $z$ giving the infimum $\inf_{z\in S(0,r)}(|x-z|+|z-y|)$ bisects the angle $\measuredangle XZY$ and, consequently, this point $z$ must be $r$. Thus, by the law of cosines and the half-angle formula of the sine function,
\begin{align}\label{eq_sR_inconju}
s_{\R^2\backslash\overline{B}^2(0,r)}(x,y) 
=\frac{|x-y|}{|x-r|+|r-y|}
=\frac{h|1-e^{\mu i}|}{2|he^{\mu i\slash2}-r|}
=\frac{h\sin(\mu\slash2)}{\sqrt{h^2+r^2-2hr\cos(\mu\slash2)}}.
\end{align}
By Theorem \ref{thm_sForConjugate}, $s_{\B^2}(x,y)=h$, if
\begin{align*}
&|x-\frac{1}{2}|>\frac{1}{2}
\quad\Leftrightarrow\quad
|2x-1|=\sqrt{(2h\cos(\mu\slash2)-1)^2+4h^2\sin^2(\mu\slash2)}>1\\
&\Leftrightarrow\quad
4h^2-4h\cos(\mu\slash2)+1>1
\quad\Leftrightarrow\quad
h>\cos(\mu\slash2),
\end{align*}
and otherwise
\begin{align}\label{eq_sB_inconju}
s_{\B^2}(x,y) 
=\frac{h\sin(\mu\slash2)}{\sqrt{1+h^2-2h\cos(\mu\slash2)}}.
\end{align}
The quotient \eqref{eq_sB_inconju} is greater than or equal to \eqref{eq_sR_inconju} if and only if
\begin{align*}
r^2-2hr\cos(\mu\slash2)\geq1-2h\cos(\mu\slash2)
\quad\Leftrightarrow\quad
\cos(\mu\slash2)\geq\frac{1+r}{2h}.
\end{align*}
The lemma follows now by combining all the results above.
\end{proof}

Any distinct points $x,y\in R(r,1)$ can be rotated around their midpoint in the following way so that the value of the triangular ratio metric for the rotated points can be found with either Proposition \ref{prop_sRingColli} or Lemma \ref{lem_sRingConjugate}.

\begin{definition}\label{def_emr}
\cite[Def. 4.1, p. 10]{sinb}
\emph{Euclidean midpoint rotation.} Choose distinct point $x,y\in\R^2$. Denote $k=(x+y)\slash2$ and $q=|x-k|=|y-k|$. Fix then four distinct points $x_0,y_0,x_1,y_1\in S^1(k,q)$ so that $(x_0+y_0)\slash2=(x_1+y_1)\slash2=k$, $|x_0|=|y_0|$, $|x_1|=|k|+q$ and $|y_1|=|k|-q$, see Figure \ref{fig3}. Note that if $x,y\in R(r,1)$, then $x_0,y_0\in R(r,1)$ always, but it might be so that $y_1\in\overline{B}^2(0,r)$ or $x_1\notin\B^2$.
\end{definition}

\begin{figure}[ht]
    \centering
    \begin{tikzpicture}[scale=3]
    \draw[thick] (2.7,0) arc (0:90:2.7);
    \draw[thick] (1,0) arc (0:90:1);
    \draw[thick] (1.3,1.3) circle (0.6cm);
    \draw[rotate around={-15:(1.3,1.3)}, thick, dashed] (1.3,1.9) -- (1.3,0.7);
    \draw[rotate around={-45:(1.3,1.3)}, thick, dashed] (1.3,1.9) -- (1.3,-0.55);
    \draw[rotate around={45:(1.3,1.3)}, thick, dashed] (1.3,1.9) -- (1.3,0.7);
    \draw[rotate around={-15:(1.3,1.3)}, thick] (1.3,1.9) circle (0.03cm);
    \draw[rotate around={-45:(1.3,1.3)}, thick] (1.3,1.9) circle (0.03cm);
    \draw[rotate around={45:(1.3,1.3)}, thick] (1.3,1.9) circle (0.03cm);
    \draw[rotate around={165:(1.3,1.3)}, thick] (1.3,1.9) circle (0.03cm);
    \draw[rotate around={135:(1.3,1.3)}, thick] (1.3,1.9) circle (0.03cm);
    \draw[rotate around={225:(1.3,1.3)}, thick] (1.3,1.9) circle (0.03cm);
    \draw[thick] (0,0) circle (0.03cm);
    \draw[thick] (1.3,1.3) circle (0.03cm);
    \node[scale=1.3] at (0,0.15) {$0$};
    \node[scale=1.3] at (1.25,1.5) {$k$};
    \node[scale=1.3] at (1.45,2.03) {$x$};
    \node[scale=1.3] at (1.1,0.57) {$y$};
    \node[scale=1.3] at (0.85,1.87) {$x_0$};
    \node[scale=1.3] at (1.83,0.76) {$y_0$};
    \node[scale=1.3] at (1.77,1.84) {$x_1$};
    \node[scale=1.3] at (0.71,0.87) {$y_1$};
    \end{tikzpicture}
    \caption{Euclidean midpoint rotation for the points $x,y$ in an annular ring $R(r,1)$.}
    \label{fig3}
\end{figure}
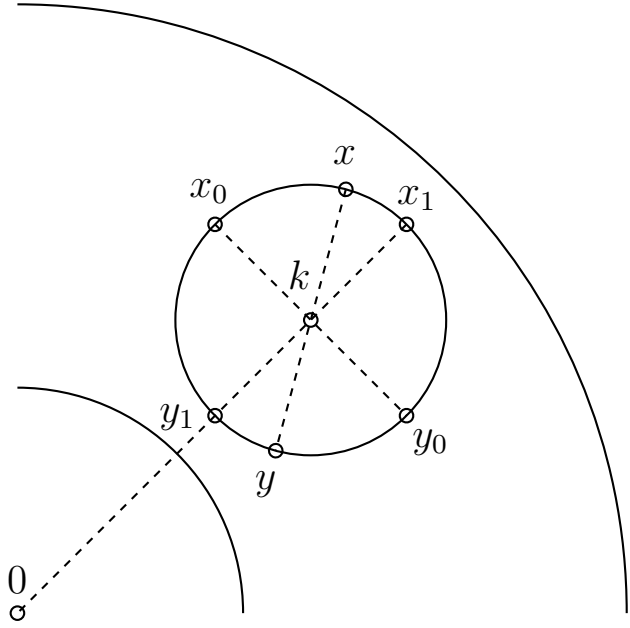

This midpoint rotation can be used to find bounds for the value of the triangular ratio metric, because the rotated points fulfill the inequality of Theorem \ref{thm_emrForS}.

\begin{proposition}\label{prop_functionf} \cite[Prop. 4.3, p. 11]{sinb}
A function $f:[0,\pi\slash2]\to\R$,
\begin{align*}
f(\mu)=\sqrt{u-v\cos(\mu)}+\sqrt{u+v\cos(\mu)},    
\end{align*} 
where $u,v>0$ are constants, is increasing on the interval $\mu\in[0,\pi\slash2]$. 
\end{proposition}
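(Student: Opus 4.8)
The plan is to avoid differentiating the sum of radicals directly and instead work with $f^2$, which has a much simpler form. First I would record the standing requirement that both radicands stay nonnegative on the whole interval: since $\cos(\mu)$ attains its maximum $1$ at $\mu=0$, the term $u-v\cos(\mu)$ is nonnegative throughout $[0,\pi/2]$ exactly when $u\geq v$, whereas $u+v\cos(\mu)\geq u>0$ holds automatically. I therefore treat $u\geq v$ as the (implicit) hypothesis under which $f$ is well defined on $[0,\pi/2]$.

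Next I would square. Writing $a=\sqrt{u-v\cos(\mu)}$ and $b=\sqrt{u+v\cos(\mu)}$, so that $f=a+b\geq0$, the difference-of-squares identity gives
\[
f(\mu)^2=a^2+b^2+2ab=2u+2\sqrt{(u-v\cos(\mu))(u+v\cos(\mu))}=2u+2\sqrt{u^2-v^2\cos^2(\mu)}.
\]
This is the key reduction: all of the $\mu$-dependence is now isolated in the single term $\cos^2(\mu)$, and the nested radical has disappeared.

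Then I would argue monotonicity of $f^2$ and transfer it back. On $[0,\pi/2]$ the cosine is nonnegative and strictly decreasing, so $\cos^2(\mu)$ is strictly decreasing; since $v>0$, the quantity $u^2-v^2\cos^2(\mu)$ is then increasing, and so is its square root. Hence $f^2$ is increasing on $[0,\pi/2]$. Because $f\geq0$ and $t\mapsto t^2$ is increasing on $[0,\infty)$, the monotonicity of $f^2$ transfers to $f$, which is the assertion.

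There is no serious obstacle here; the only points needing care are the well-definedness of the radicands (secured by $u\geq v$) and the remark that passing from $f$ to $f^2$ is legitimate precisely because $f$ is nonnegative. A direct computation giving $f'(\mu)=\tfrac{v\sin(\mu)}{2}\big((u-v\cos(\mu))^{-1/2}-(u+v\cos(\mu))^{-1/2}\big)$ yields the same conclusion, since the bracket is nonnegative on $[0,\pi/2]$ because $u-v\cos(\mu)\leq u+v\cos(\mu)$ there, but the squaring argument is cleaner and avoids differentiating the radicals altogether.
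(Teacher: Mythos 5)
Your proposal is correct. Note, however, that the paper does not prove this proposition at all: it is quoted verbatim from the reference \cite[Prop.\ 4.3, p.\ 11]{sinb}, and the proof there is the direct differentiation argument you sketch in your final remark, namely computing
\[
f'(\mu)=\frac{v\sin(\mu)}{2}\left(\frac{1}{\sqrt{u-v\cos(\mu)}}-\frac{1}{\sqrt{u+v\cos(\mu)}}\right)\geq 0 ,
\]
which is nonnegative because $u-v\cos(\mu)\leq u+v\cos(\mu)$ on $[0,\pi\slash2]$. Your main argument---passing to $f^2=2u+2\sqrt{u^2-v^2\cos^2(\mu)}$ and using that $\cos^2(\mu)$ decreases on $[0,\pi\slash2]$ together with the nonnegativity of $f$---is a genuinely different and somewhat more robust route: it never differentiates a radical, so it needs no smoothness discussion and painlessly covers the borderline case $u=v$, where the derivative formula degenerates to a $0\cdot\infty$ expression at $\mu=0$ and strictly speaking requires a limiting argument. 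You are also right to flag the implicit hypothesis $u\geq v$, which neither the statement nor the source makes explicit but which is needed for $f$ to be real-valued at $\mu=0$; in the paper's application (the proof of Theorem \ref{thm_emrForS}, with $u=q^2+(k-r)^2$ and $v=2q(k-r)$) it holds automatically since $(q-(k-r))^2\geq 0$. What the differentiation proof buys in exchange is that it localizes: it shows $f'\geq0$ pointwise and immediately yields strict monotonicity on the open interval, which the squaring argument also gives but only after noting that $\cos^2$ is strictly decreasing there.
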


\begin{theorem}\label{thm_diskdiameter} \cite[Cor. 4.6, p. 14]{sinb}
The $s_G$-diameter of a closed ball $\overline{B}^n(k,q)$ in a domain $G\subsetneq\R^n$ is $s_G(\overline{B}^n(k,q))=q\slash(q+d)$, where $d=\inf\{|x-z|\text{ }|\text{ }x\in\overline{B}^n(k,q),\,z\in\partial G\}$.
\end{theorem}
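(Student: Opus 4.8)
The plan is to pin down the extremal pair and bound all others by it. Write $D=q+d$. Since $d$ is the Euclidean distance from $\overline{B}^n(k,q)$ to $\partial G$, the quantity $D=\operatorname{dist}(k,\partial G)$ is the distance from the center to the boundary, and the open ball $B^n(k,D)$ lies in $G$. The $s_G$-diameter is $\sup\{s_G(x,y):x,y\in\overline{B}^n(k,q)\}$, and I would prove it equals $q/D=q/(q+d)$ by establishing the bounds $\ge q/D$ and $\le q/D$ separately.

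For the lower bound I would exhibit one pair attaining $q/D$. Pick $z^\ast\in\partial G$ with $|z^\ast-k|=D$ (it exists since $\partial G$ is closed and nonempty), put $u=(z^\ast-k)/D$ and take the antipodal pair $x=k+qu$, $y=k-qu$ on $S^{n-1}(k,q)$. Then $|x-y|=2q$, and the single admissible point $z^\ast$ gives $|x-z^\ast|+|z^\ast-y|=(D-q)+(D+q)=2D$, whence $\inf_{z\in\partial G}(|x-z|+|z-y|)\le 2D$ and $s_G(x,y)\ge q/D$.

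For the upper bound I would first reduce to the unit ball. Fix $x,y\in\overline{B}^n(k,q)$. As $B^n(k,D)\subseteq G$, every $z\in\partial G$ satisfies $|z-k|\ge D$; moreover the segment $[x,y]$ lies in $B^n(k,D)$, so the sum $|x-z|+|z-y|$ is minimized over $\{|z-k|\ge D\}$ on the sphere $S^{n-1}(k,D)$. Hence $\inf_{z\in\partial G}(|x-z|+|z-y|)$ is at least the corresponding infimum over $S^{n-1}(k,D)$, i.e. $s_G(x,y)\le s_{B^n(k,D)}(x,y)$. A similarity taking $B^n(k,D)$ to $\B^n$ maps $\overline{B}^n(k,q)$ onto $\overline{B}^n(0,t)$ with $t=q/D$, so it suffices to prove the scale-free estimate
\[
s_{\B^n}(X,Y)\le\max\{|X|,|Y|\}\qquad(X,Y\in\B^n),
\]
since its right-hand side is then at most $t$. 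To prove this I would apply the Euclidean midpoint rotation of Definition \ref{def_emr} to the pair $X,Y$: rotation about their midpoint keeps $|X-Y|$ fixed, while Proposition \ref{prop_functionf}, applied to the sum $|X-Z|+|Z-Y|$ with suitable constants $u,v$, shows that this sum—and hence $s_{\B^n}$—is monotone in the rotation angle. The orbit is thus extremized at the equidistant configuration $X_0,Y_0$ (with $|X_0|=|Y_0|$) and the collinear configuration $X_1,Y_1$ (on a line through the origin); for the former the minimizing $Z$ bisects the angle as in Lemma \ref{lem_zbisectsXZY} and Theorem \ref{thm_sForConjugate} gives $s_{\B^n}(X_0,Y_0)\le|X_0|$, while for the latter the minimizer is the endpoint of that diameter on $S^{n-1}$ and a one-line computation bounds $s_{\B^n}(X_1,Y_1)$ by $\max\{|X_1|,|Y_1|\}$. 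Tracking the admissible part of the orbit then yields the displayed estimate.

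The hard part is this last estimate. The delicate point is that the collinear extreme $X_1$ of the midpoint rotation may fall outside $\B^n$ (as already flagged in Definition \ref{def_emr}), so one cannot evaluate $s_{\B^n}$ there; the argument must instead combine the monotonicity from Proposition \ref{prop_functionf} with the constraint that both rotated points remain in $\overline{B}^n(0,t)$—equivalently, that an antipodal pair on the rotation circle straddles the sphere $S^{n-1}(0,t)$—so that the supremum over admissible pairs is realized either inside the ball at the collinear configuration or on $S^{n-1}(0,t)$ at a pair handled by the equidistant case and Theorem \ref{thm_sForConjugate}. Once this sharp bound is in place, the two displayed inequalities combine to give $s_G(\overline{B}^n(k,q))=q/(q+d)$; the auxiliary facts (existence of $z^\ast$, the reduction to $S^{n-1}(k,D)$, and similarity invariance) are routine.
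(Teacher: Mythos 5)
Your lower bound is correct (the antipodal pair aimed at a nearest boundary point $z^\ast$, which exists since $d_G(k)$ is attained on the closed set $\partial G$), and so is your reduction of the upper bound: $B^n(k,D)\subseteq G$ with $D=q+d=d_G(k)$, the nested-ellipsoid argument giving $s_G(x,y)\le s_{B^n(k,D)}(x,y)$, and similarity invariance reduce everything to the scale-free claim $s_{\B^n}(X,Y)\le\max\{|X|,|Y|\}$. That claim is true -- it is exactly the theorem in the special case $G=\B^n$, $k=0$ -- so the entire burden of proof sits on it. (For the record, the paper under review does not prove this theorem at all; it imports it from \cite[Cor. 4.6]{sinb}, so your attempt can only be judged on its own terms.)

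It is precisely at this key estimate that your argument has a genuine gap, in fact two. First, Proposition \ref{prop_functionf} does not give monotonicity of $s_{\B^n}$ along the midpoint rotation: the sum $|X(\mu)-Z|+|Z-Y(\mu)|$ has the form $\sqrt{u-v\cos\mu}+\sqrt{u+v\cos\mu}$ only when $Z$ lies on the line through the origin and the rotation center $K$ (this is how the paper uses it in Theorem \ref{thm_emrForS}, with $Z=r$); for the actual minimizing $Z\in S^{n-1}$, which moves with $\mu$ and is generically off that axis, no such formula holds. Indeed, in this paper the collinear (upper-bound) half of the rotation inequality is \emph{deduced from} Theorem \ref{thm_diskdiameter} in the proof of Theorem \ref{thm_emrForS}, so invoking that monotonicity to prove Theorem \ref{thm_diskdiameter} is circular unless you supply an independent proof. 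Second, even granting orbit monotonicity, your terminal dichotomy fails: rotating toward the collinear position increases $|X(\mu)|$, so the constrained flow stops as soon as the far point reaches $S^{n-1}(0,t)$, and at that moment the pair has \emph{one} point on $S^{n-1}(0,t)$ and one strictly inside. Such a mixed pair (e.g. $X=t$, $Y=ti/2$) is neither a collinear configuration inside the ball nor an equidistant pair, so Theorem \ref{thm_sForConjugate} does not apply to it, and bounding $s_{\B^n}$ for mixed pairs is essentially the original problem again. A workable repair avoids the ball altogether: for each $z\in\partial G$ pass to the half-space $H_z$ bounded by the hyperplane through $z$ orthogonal to $k-z$ and containing $\overline{B}^n(k,q)$; reflecting $y$ across $\partial H_z$ gives $|x-z|+|z-y|=|x-z|+|z-\tilde{y}|\ge|x-\tilde{y}|$, and an elementary computation (the resulting inequality reduces to a perfect square) yields $|x-\tilde{y}|\ge|x-y|(q+d)/q$, which is the required upper bound without any Alhazen-type minimization.
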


\begin{theorem}\label{thm_emrForS}
For all distinct points $x,y\in R(r,1)$, fix $x_0,y_0,x_1,y_1$ as in Definition \ref{def_emr}. If the distance $s_{R(r,1)}(x_1,y_1)$ is well-defined for $x_1,y_1\in R(r,1)$, the inequality 
\begin{align*}
s_{R(r,1)}(x_0,y_0)\leq s_{R(r,1)}(x,y)\leq s_{R(r,1)}(x_1,y_1)    
\end{align*}
holds. Otherwise only the first part of this inequality holds and the points $x,y$ can be rotated around their Euclidean midpoint into new points $x',y'$ so that $s_{R(r,1)}(x',y')\to1^-$.
\end{theorem}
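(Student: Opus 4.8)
The plan is to exploit that every pair produced by the rotation is antipodal on the circle $S^1(k,q)$, so all of them share the same Euclidean separation $|x-y|=2q$; consequently, comparing values of $s_{R(r,1)}$ reduces to comparing the denominators $g(a,b)=\inf_{z\in\partial R(r,1)}(|a-z|+|z-b|)$, a larger denominator meaning a smaller metric value. First I would normalize: since $R(r,1)$ is invariant under rotations about the origin, I may rotate so that $k=|k|\geq0$ and write the rotated pair as $a=k+qe^{i\phi}$, $b=k-qe^{i\phi}$. Then $x_0,y_0$ (equidistant from the origin) corresponds to $\phi=\pi/2$ and $x_1,y_1$ (collinear with the origin) to $\phi=0$, and from $g(\overline{a},\overline{b})=g(a,b)=g(b,a)$ one checks that $\phi\mapsto g$ is even and symmetric about $\pi/2$, so I may assume the original pair has $\phi\in[0,\pi/2]$. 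The degenerate cases $k=0$ (all antipodal pairs congruent) and $|k|\leq r$ (then the midpoint $k\in[x,y]\cap\overline{B}^2(0,r)$ lies on every segment, forcing $s_{R(r,1)}=1$ throughout) I would dispose of directly.

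For the lower bound $s_{R(r,1)}(x_0,y_0)\leq s_{R(r,1)}(x,y)$ I must show $g(a,b)\leq g(x_0,y_0)$, and since $g=\min\{g_r,g_1\}$ with $g_j=\inf_{z\in S^1(0,j)}(|a-z|+|z-b|)$, it suffices to bound $g_r$ and $g_1$ separately by their values at $\phi=\pi/2$. The decisive observation is that one need not evaluate these infima for general $\phi$: estimating the infimum from above by the single admissible boundary point $z=r$ gives $g_r(a,b)\leq|a-r|+|r-b|=\sqrt{W+V\cos\phi}+\sqrt{W-V\cos\phi}$ with $W=(k-r)^2+q^2$ and $V=2(k-r)q>0$, which Proposition \ref{prop_functionf} shows is increasing on $[0,\pi/2]$ and hence at most its value $2\sqrt{W}$ at $\phi=\pi/2$. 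At $\phi=\pi/2$ the pair is a conjugate pair with $k>r$, so its segment avoids $\overline{B}^2(0,r)$ and the minimizing point is indeed $z=r$ by the symmetric bisector case of Lemma \ref{lem_zbisectsXZY} (exactly as in \eqref{eq_sR_inconju}), giving $g_r(x_0,y_0)=2\sqrt{W}$. Running the same argument with $z=1$ and $W'=(1-k)^2+q^2$ yields $g_1(a,b)\leq g_1(x_0,y_0)$, and taking minima gives $g(a,b)\leq g(x_0,y_0)$, hence the lower bound.

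For the upper bound in the well-defined case $x_1,y_1\in R(r,1)$, the inequalities $r<|k|-q$ and $|k|+q<1$ place the whole ball $\overline{B}^2(k,q)$ inside $R(r,1)$, so Theorem \ref{thm_diskdiameter} applies and gives $s_{R(r,1)}(x,y)\leq s_{R(r,1)}(\overline{B}^2(k,q))=q/(q+d)$ with $d=\min\{1-|k|-q,\,|k|-q-r\}$. Since $x_1,y_1$ are collinear with the origin and satisfy $|x_1-y_1|=2q$, $|x_1+y_1|=2|k|$, Proposition \ref{prop_sRingColli} gives $s_{R(r,1)}(x_1,y_1)=q/\min\{1-|k|,\,|k|-r\}=q/(q+d)$, so $s_{R(r,1)}(x,y)\leq s_{R(r,1)}(x_1,y_1)$. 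In the remaining case the nearest or farthest point of $\overline{B}^2(k,q)$ leaves the ring, so $S^1(k,q)$ meets $\partial R(r,1)$; rotating one endpoint of the antipodal pair toward such an intersection point sends its distance to $\partial R(r,1)$ to $0$, whence the denominator tends to $|x'-y'|=2q$ and $s_{R(r,1)}(x',y')\to1^-$.

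The step I expect to be the main obstacle is the lower bound, and within it the verification that $g_r(x_0,y_0)$ is genuinely attained at $z=r$: this requires selecting the correct branch of the bisector condition in Lemma \ref{lem_zbisectsXZY} and using $k>r$ to keep the segment clear of the inner disk, alongside careful bookkeeping of the degenerate ranges of $|k|$. The monotonicity supplied by Proposition \ref{prop_functionf} is precisely what makes the strategy of ``insert the symmetric optimizer, then compare'' succeed, so that the true minimizer at a general angle $\phi$ (the root of the earlier quartic) never has to be computed.
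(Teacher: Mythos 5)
Your upper bound and your handling of the degenerate cases are correct and essentially the same as the paper's (the paper also combines Theorem \ref{thm_diskdiameter} with Proposition \ref{prop_sRingColli}, and dismisses the case $\overline{B}^2(k,q)\cap\partial R(r,1)\neq\varnothing$ in one line). The genuine gap is in your lower bound, in the sentence ``running the same argument with $z=1$ \dots yields $g_1(a,b)\leq g_1(x_0,y_0)$.'' The insertion-plus-monotonicity trick only gives $g_1(a,b)\leq|a-1|+|1-b|\leq 2\sqrt{W'}$; to conclude $g_1(a,b)\leq g_1(x_0,y_0)$ you would also need $g_1(x_0,y_0)=2\sqrt{W'}$, i.e.\ that the minimizing point on $S^1$ for the conjugate pair $(x_0,y_0)$ is $z=1$. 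That symmetry argument is valid for the \emph{inner} circle, where $x_0,y_0$ lie outside $\overline{B}^2(0,r)$ and the bisecting point is $z=r$, but it fails for the \emph{outer} circle, where $x_0,y_0$ lie inside $\B^2$: by the paper's own Theorem \ref{thm_sForConjugate}, whenever $|x_0-\tfrac12|>\tfrac12$ the minimizer is off the real axis, $s_{\B^2}(x_0,y_0)=|x_0|$, and hence
\begin{align*}
g_1(x_0,y_0)=\frac{2q}{|x_0|}<|x_0-1|+|1-y_0|=2\sqrt{W'}.
\end{align*}
This matters for the minimum: take $r=0.01$, $k=0.35$, $q=0.9$. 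Then $g_r(x_0,y_0)=2\sqrt{(k-r)^2+q^2}\approx1.924$ while $g_1(x_0,y_0)=2q/\sqrt{k^2+q^2}\approx1.864$, so $g(x_0,y_0)\approx1.864$ is determined by the outer circle. Admissible pairs $(x,y)$ in the ring with $\phi$ near $\pi/2$ exist (one needs $\cos\phi<(1-k^2-q^2)/(2kq)\approx0.107$), and for them your two estimates give only $g(a,b)\leq\min\{2\sqrt{W},2\sqrt{W'}\}\approx1.924$, which does not imply the required $g(a,b)\leq1.864$. So your argument does not prove the first inequality of the theorem.

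The inequality you need, $g_1(a,b)\leq g_1(x_0,y_0)$, equivalently $s_{\B^2}(x,y)\geq s_{\B^2}(x_0,y_0)$, is true, but it is a genuinely harder unit-disk statement that cannot be obtained by inserting a single trial point: one must control an off-axis minimizer. This is precisely why the paper does not repeat the insertion trick on the outer circle but instead quotes the unit-disk comparison as a known result, \cite[Thm 4.11, p. 15]{sinb}, and applies the insertion-plus-Proposition \ref{prop_functionf} argument only to $s_{\R^2\backslash\overline{B}^2(0,r)}$, exactly as you do for $g_r$. To close the gap, either cite that theorem for the $\B^2$ component, or give a separate argument for conjugate pairs in $\B^2$ covering the regime $|x_0-\tfrac12|>\tfrac12$.
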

\begin{proof}
Denote $k=(x+y)\slash2$ and $q=|x-k|$. Suppose without loss of generality that $k\in[0,1]\cap R(r,1)$. Now, the points of Definition \ref{def_emr} can be written as $x_0=k+qi$, $y_0=k-qi$, $x_1=k+q$ and $y_1=k-q$. Furthermore, denote the angle between $L(x,y)$ and the real axis by $\mu$.

It follows from Lemma \ref{lem_zbisectsXZY} that the infimum $\inf_{z\in S(0,r)}(|x_0-z|+|z-y_0|)$ is given by $z=r$ and thus
\begin{align*}
s_{\R^2\backslash\overline{B}^2(0,r)}(x_0,y_0)
=\frac{|x_0-y_0|}{|x_0-r|+|r-y_0|}
=\frac{q}{\sqrt{(k-r)^2+q^2}}.
\end{align*}
By the law of cosines and Proposition \ref{prop_functionf},
\begin{align*}
&s_{\R^2\backslash\overline{B}^2(0,r)}(x,y) 
\geq\frac{|x-y|}{|x-r|+|r-y|}\\
&=\frac{2q}{\sqrt{q^2+(k-r)^2-2q(k-r)\cos(\mu)}+\sqrt{q^2+(k-r)^2+2q(k-r)\cos(\mu)}}\\
&\geq\frac{q}{\sqrt{(k-r)^2+q^2}}
=s_{\R^2\backslash\overline{B}^2(0,r)}(x_0,y_0).
\end{align*}
Since $s_{\B^2}(x,y)\geq s_{\B^2}(x_0,y_0)$ by \cite[Thm 4.11, p. 15]{sinb}, it follows that
\begin{align*}
&s_{R(r,1)}(x_0,y_0)
=\max\{s_{\R^2\backslash\overline{B}^2(0,r)}(x_0,y_0),s_{\B^2}(x_0,y_0)\}
\leq\max\{s_{\R^2\backslash\overline{B}^2(0,r)}(x,y),s_{\B^2}(x,y)\}\\ &=s_{R(r,1)}(x,y),   
\end{align*}
which proves the first part of the inequality.

Suppose next that $x_1,y_1\in R(r,1)$. Now, $x,y\in\overline{B}^2(k,q)\subset R(r,1)$ and, by Theorem \ref{thm_diskdiameter} and Proposition \ref{prop_sRingColli},
\begin{align*}
s_{R(r,1)}(x,y)
&=\max\{s_{\R^2\backslash\overline{B}^2(0,r)}(x,y),s_{\B^2}(x,y)\}
\leq\max\{s_{\R^2\backslash\overline{B}^2(0,r)}(\overline{B}^2(k,q)),s_{\B^2}(\overline{B}^2(k,q))\}\\
&=\max\left\{\frac{q}{q+k-q-r},\frac{q}{q+1-k-q}\right\}
=\max\left\{\frac{q}{k-r},\frac{q}{1-k}\right\}\\
&=\max\left\{\frac{|x_1-y_1|}{|x_1+y_1|-2r},\frac{|x_1-y_1|}{2-|x_1+y_1|}\right\}
=s_{R(r,1)}(x_1,y_1).
\end{align*}
If $x_1\notin R(r,1)$ or $y_1\notin R(r,1)$, then $\overline{B}^2(k,q)\cap(\partial R(r,1))\neq\varnothing$ and the rest of the theorem follows trivially.
\end{proof}

A similar result also holds in the punctured unit disk $\B^2\backslash\{0\}$. 

\begin{lemma}
For all $x,y\in\B^2\backslash\{0\}$, 
\begin{align*}
s_{\B^2\backslash\{0\}}(x_0,y_0)
\leq s_{\B^2\backslash\{0\}}(x,y)
\leq s_{\B^2\backslash\{0\}}(x_1,y_1),
\end{align*}
where $x_0,y_0,x_1,y_0$ are as in Definition \ref{def_emr} (assuming that $y_0\neq0$ and $x_0\in\B^2$ so that $s_{\B^2\backslash\{0\}}(x_1,y_1)$ is well-defined).
\end{lemma}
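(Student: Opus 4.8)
The plan is to run the argument of Theorem \ref{thm_emrForS} with the inner radius degenerated to $r=0$; the only structural change is that the inner boundary of $\B^2\backslash\{0\}$ is the single point $0$, which actually removes the reflection/infimum step. The first move is to note $\partial(\B^2\backslash\{0\})=\{0\}\cup S^1$, so the defining infimum is a minimum over $z=0$ and over $z\in S^1$, yielding the decomposition
\begin{align*}
s_{\B^2\backslash\{0\}}(x,y)=\max\{s_{\R^2\backslash\{0\}}(x,y),\,s_{\B^2}(x,y)\},
\end{align*}
where $s_{\R^2\backslash\{0\}}(x,y)=|x-y|/(|x|+|y|)$ (the point $z=0$) and $s_{\B^2}(x,y)$ comes from $z\in S^1$. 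As in the cited proof I would put $k=(x+y)/2$ on the positive real axis, set $q=|x-k|$, record $x_0=k+qi$, $y_0=k-qi$, $x_1=k+q$, $y_1=k-q$, and let $\mu$ be the angle between $L(x,y)$ and the real axis. Since $|x_0|^2=|y_0|^2=k^2+q^2\le k^2+q^2+2kq|\cos\mu|=\max\{|x|^2,|y|^2\}<1$ and $|x_0|\ge q>0$, the points $x_0,y_0$ always lie in $\B^2\backslash\{0\}$, so the left-hand distance is well-defined unconditionally.

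For the lower bound I handle the two components separately. Because the inner boundary is a single point, no minimisation over a circle is needed: from $|x|^2=k^2+q^2\pm2kq\cos\mu$ one gets
\begin{align*}
s_{\R^2\backslash\{0\}}(x,y)=\frac{2q}{\sqrt{k^2+q^2+2kq\cos\mu}+\sqrt{k^2+q^2-2kq\cos\mu}},
\end{align*}
and Proposition \ref{prop_functionf} with $u=k^2+q^2$, $v=2kq$ shows the denominator is largest at $\mu=\pi/2$, i.e. for $x_0,y_0$; hence $s_{\R^2\backslash\{0\}}(x_0,y_0)=q/\sqrt{k^2+q^2}\le s_{\R^2\backslash\{0\}}(x,y)$ (with equality in the degenerate case $k=0$). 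For the disk component I invoke \cite[Thm 4.11, p. 15]{sinb} exactly as in the proof of Theorem \ref{thm_emrForS} to obtain $s_{\B^2}(x_0,y_0)\le s_{\B^2}(x,y)$. Taking the maximum of the two inequalities gives $s_{\B^2\backslash\{0\}}(x_0,y_0)\le s_{\B^2\backslash\{0\}}(x,y)$.

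For the upper bound I use the hypotheses, which I read as $y_1\neq0$ and $x_1\in\B^2$, and split into two cases. If $q<k$ then $\overline{B}^2(k,q)\subset\B^2\backslash\{0\}$ and $x,y\in\overline{B}^2(k,q)$, so each component is bounded by the corresponding $s$-diameter of the ball; Theorem \ref{thm_diskdiameter} gives these with $d=k-q$ (distance of the ball from $0$) and $d=1-k-q$ (distance from $S^1$), namely $q/k$ and $q/(1-k)$. A direct computation for the collinear pair $x_1=k+q$, $y_1=k-q$ yields $s_{\R^2\backslash\{0\}}(x_1,y_1)=q/k$ and $s_{\B^2}(x_1,y_1)=q/(1-k)$, so after taking maxima
\begin{align*}
s_{\B^2\backslash\{0\}}(x,y)\le\max\{q/k,\,q/(1-k)\}=s_{\B^2\backslash\{0\}}(x_1,y_1).
\end{align*}
If instead $q>k$, then $x_1>0>y_1$ so the segment $[x_1,y_1]$ passes through $0$, giving $s_{\R^2\backslash\{0\}}(x_1,y_1)=1$ and hence $s_{\B^2\backslash\{0\}}(x_1,y_1)=1$, so the bound holds trivially; the remaining value $q=k$ is excluded by $y_1\neq0$.

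I do not expect a genuine obstacle: the statement is the $r\to0$ degeneration of Theorem \ref{thm_emrForS}, and the only points needing care are bookkeeping ones, namely verifying the $\{0\}$-versus-$S^1$ decomposition, confirming that the single-point inner boundary eliminates the reflection argument used for a positive inner radius, and organising the upper-bound case split so that the configurations where $\overline{B}^2(k,q)$ contains $0$ fall into the trivial branch $s_{\B^2\backslash\{0\}}(x_1,y_1)=1$, exactly as the ``otherwise'' clause does in Theorem \ref{thm_emrForS}.
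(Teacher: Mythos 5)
Your proof is correct, and it rests on exactly the decomposition the paper itself uses, namely $s_{\B^2\backslash\{0\}}(x,y)=\max\{s_{\B^2}(x,y),\,|x-y|\slash(|x|+|y|)\}$, but after that point the routes differ in execution. The paper's proof is a one-liner: it states this decomposition and then finishes by citing two external results, \cite[Thm 4.4, p. 12 \& Thm 4.11, p. 15]{sinb}, which supply the midpoint-rotation monotonicity for the punctured-plane component and the disk component. You invoke only the second of these (for $s_{\B^2}(x_0,y_0)\leq s_{\B^2}(x,y)$) and otherwise argue from tools already stated in this paper: Proposition \ref{prop_functionf} gives the punctured-plane lower bound, and Theorem \ref{thm_diskdiameter} together with the case split $q<k$, $q>k$, $q=k$ gives the upper bound for both components at once --- in effect you re-run the proof of Theorem \ref{thm_emrForS} with the inner radius degenerated to $r=0$, which is legitimate because the single-point inner boundary removes the reflection step rather than complicating it. The paper's route buys brevity; yours buys self-containedness and explicit bookkeeping: you verify that $x_0,y_0\in\B^2\backslash\{0\}$ always hold (so the lemma's side conditions, which as printed say ``$y_0\neq0$ and $x_0\in\B^2$'' but evidently mean $y_1\neq0$ and $x_1\in\B^2$, are only needed for the upper bound), you compute $s_{\B^2\backslash\{0\}}(x_1,y_1)=\max\{q\slash k,\,q\slash(1-k)\}$ exactly when $q<k$, and your trivial branch $s_{\B^2\backslash\{0\}}(x_1,y_1)=1$ for $q>k$ plays the role of the ``otherwise'' clause of Theorem \ref{thm_emrForS}. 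I see no gap in your argument.
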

\begin{proof}
Clearly,
\begin{align*}
s_{\B^2\backslash\{0\}}(x,y)
=\max\left\{s_{\B^2}(x,y),\frac{|x-y|}{|x|+|y|}\right\},
\end{align*}
so the result follows from \cite[Thm 4.4, p. 12 \& Thm 4.11, p. 15]{sinb}.
\end{proof}

\section{M\"obius metric in the annular ring}

In this section, we will study the M\"obius metric defined in the annular ring $R(r,1)$. Our main result is the following theorem that can be used to compute the exact value of the M\"obius metric for any points $x,y$ in $R(r,1)$. Since the supremum in Theorem \ref{thm_deltaInRing} only needs to be found for one variable $v$ defined in a closed real-valued interval $[\mu,\pi]$ with $\mu\in[0,\pi]$ instead of the two boundary points in the general expression of the M\"obius metric, this result can be used to write an effective algorithm for computation of the M\"obius metric with some single-variable optimization function, like the function \emph{optimize} in R.  

\begin{theorem}\label{thm_deltaInRing}
For all $x,y\in R(r,1)$ with $|y|\leq|x|$,
\begin{align*}
\delta_{R(r,1)}(x,y)
=\max\left\{
\rho_{\B^2}(x,y),\,
\rho_{\B^2}\left(\frac{rx}{|x|^2},\frac{ry}{|y|^2}\right),\,
\sup_{v\in[\mu,\pi]}\log(1+|e^{-u(v)i},|x|,re^{vi},|y|e^{\mu i}|)
\right\},
\end{align*}
where $\mu\in(0,\pi)$ is the value of the angle $\measuredangle XOY$, and 
\begin{align*}
&u(v)=\arcsin\left(\frac{-c_2+\sqrt{c_2^2-4c_1c_3}}{2c_1}\right)\quad\text{with}\\
&c_1=|x|^2(1+r^2)^2+r^2(1+|x|^2)^2-2r|x|(1+r^2)(1+|x|^2)\cos(v),\\
&c_2=4r|x|\sin(v)(|x|(1+r^2)-r(1+|x|^2)\cos(v)),\\
&c_3=-r^2\sin(v)^2(1-|x|^2)^2.    
\end{align*}
\end{theorem}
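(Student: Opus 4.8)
The plan is to use that $\partial R(r,1)=S^1\cup S^1(0,r)$ is a disjoint union of two circles, so the supremum $\delta_{R(r,1)}(x,y)=\sup_{a,b\in\partial R(r,1)}\log(1+|a,x,b,y|)$ splits into three pieces according to whether $(a,b)$ lies entirely on $S^1$, entirely on $S^1(0,r)$, or with one point on each circle. The map $(a,b)\mapsto|a,x,b,y|$ is continuous on the compact set $\partial R(r,1)\times\partial R(r,1)$ — none of the denominators $|a-x|,|b-y|$ vanish because $x,y$ are interior points — so each piece attains a maximum and $\delta_{R(r,1)}(x,y)$ is the largest of the three.

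First I would dispose of the two ``same-circle'' pieces. For $a,b\in S^1=\partial\B^2$ the corresponding supremum is by definition $\delta_{\B^2}(x,y)$, which equals $\rho_{\B^2}(x,y)$ by Theorem~\ref{thm_pr_ineqs}(4); this applies since $x,y\in R(r,1)\subset\B^2$, and produces the first term. For $a,b\in S^1(0,r)$ I would apply the inversion $f(z)=rz/|z|^2$ of Example~\ref{ex_mobf}, which preserves $R(r,1)$ and interchanges the two boundary circles. Since M\"obius maps preserve absolute cross-ratios, this supremum equals the one taken over $f(a),f(b)\in S^1$ with $x,y$ replaced by $f(x)=rx/|x|^2$ and $f(y)=ry/|y|^2$; by the previous case it is $\rho_{\B^2}(rx/|x|^2,ry/|y|^2)$, the second term. (Alternatively one may quote the M\"obius invariance of $\delta$ in Theorem~\ref{thm_seitminvariant}.)

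The content of the theorem is the mixed piece. After a rotation about the origin — a M\"obius transformation preserving $R(r,1)$, hence harmless by Theorem~\ref{thm_seitminvariant} — I would set $x=|x|$ and $y=|y|e^{\mu i}$, and write the outer point $a=e^{i\alpha}\in S^1$ and the inner point $b=re^{vi}\in S^1(0,r)$. The hypothesis $|y|\le|x|$ enters here: pairing the outer point with $x$ and the inner point with $y$ lets both denominators $|a-x|$ and $|b-y|$ be made small, whereas the opposite pairing forces $|a-y|\ge1-|y|$ and $|b-x|\ge|x|-r$ to be large, so the ordering realized in the stated cross-ratio dominates and the mixed supremum reduces to it. In $|a,x,b,y|=|a-b|\,|x-y|/(|a-x|\,|b-y|)$ only the factor $|a-b|/|a-x|$ depends on $\alpha$, so for each fixed $v$ I would first maximize $|a-b|/|a-x|$ over the single angle $\alpha$. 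With $|a-b|^2=1+r^2-2r\cos(\alpha-v)$ and $|a-x|^2=1+|x|^2-2|x|\cos\alpha$, setting the derivative of the squared ratio to zero gives a trigonometric equation; substituting $\alpha=-u$, isolating the terms containing $\cos u$, squaring, and replacing $\cos^2u=1-\sin^2u$ collapses it to the quadratic $c_1\sin^2u+c_2\sin u+c_3=0$ with exactly the displayed coefficients, whose positive-discriminant root gives $u(v)=\arcsin(\cdots)$. Substituting the optimal $a=e^{-u(v)i}$ then leaves a supremum over the single remaining variable $v$.

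The hard part will be this mixed-case optimization, where two delicate points need care. First, the algebra: one must verify that the $\alpha$-stationarity really collapses to the stated quadratic after the substitution and the squaring step, and that the chosen root — not the other root, a spurious solution introduced by squaring, nor an endpoint in $\alpha$ — genuinely realizes the maximum of $|a-b|/|a-x|$; a sign or second-derivative check settles this. Second, justifying the reduced range $v\in[\mu,\pi]$: this I would obtain by studying the optimized cross-ratio as a function of $v$ alone, using the reflection of the configuration in the real axis (which preserves both circles and $x$ while sending $y\mapsto\overline{y}$) to pair up angles, together with a monotonicity argument showing that the maximizing inner angle cannot leave $[\mu,\pi]$. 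Taking the maximum of the three pieces then yields the asserted formula.
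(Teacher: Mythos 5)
Your proposal follows essentially the same route as the paper's proof: the same three-way decomposition of the supremum over $\partial R(r,1)\times\partial R(r,1)$ (with the hypothesis $|y|\leq|x|$ used to discard the opposite mixed pairing), the same use of Theorem \ref{thm_pr_ineqs}(4) and the inversion of Example \ref{ex_mobf} for the two same-circle pieces, and the same one-variable optimization for the mixed piece — differentiating in the outer angle, squaring with $\cos^2 u=1-\sin^2 u$ to reach the quadratic $c_1\sin^2 u+c_2\sin u+c_3=0$, taking the positive root, and restricting $v\in[\mu,\pi]$ by a reflection argument. The two ``delicate points'' you flag (choice of root and the reduced $v$-range) are exactly the steps the paper settles, so your plan is sound and matches the published proof.
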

\begin{proof}
Since $|y|\leq|x|$, trivially
\begin{align*}
\sup_{a\in S^1,\, b\in S^1(0,r)}|a,x,b,y|\geq\sup_{a\in S^1(0,r),\, b\in S^1}|a,x,b,y|.    
\end{align*}
Thus,
\begin{align*}
\delta_{R(r,1)}(x,y)
&=\sup_{a,b\in\partial R(r,1)}\log(1+|a,x,b,y|)\\
&=\max\{
\delta_{\B^2}(x,y),\,
\delta_{\overline{\R}^2\backslash\overline{B}^2(0,r)}(x,y),\,
\sup_{a\in S^1,\, b\in S^1(0,r)}\log(1+|a,x,b,y|)
\}.
\end{align*}
By using the M\"obius transformation $f$ from Example \ref{ex_mobf} and the M\"obius invariance of the M\"obius metric, it follows that
\begin{align*}
\delta_{\overline{\R}^2\backslash\overline{B}^2(0,r)}(x,y)
=\delta_{\B^2}(f(x),f(y))
=\delta_{\B^2}(rx\slash |x|^2,ry\slash |y|^2).
\end{align*}
Since $\delta_{\B^2}(x,y)=\rho_{\B^2}(x,y)$ by Theorem \ref{thm_pr_ineqs}(4), the first two components of the maximum expression above are the same as in the theorem.

Let us now maximize the supremum $\sup_{a\in S^1,\, b\in S^1(0,r)}|a,x,b,y|$ with respect to $a$. Since the cross-ratio is invariant under reflections and rotations in the ring $R(r,1)$, we can assume without loss of generality that $\arg(x)=0$ and $0\leq\arg(y)\leq\pi$. Note that if $\arg(a)\in(0,\pi)$ or $\arg(b)\in(\pi,2\pi)$ now, we can reflect one or both of the points $a,b$ over the real axis without decreasing the cross-ratio $|a,x,b,y|$. Thus, we can also suppose that $\arg(a)\in(\{0\}\cup[\pi,2\pi))$ and $\arg(b)\in[0,\pi]$, and fix $a=e^{-ui}$ and $b=re^{vi}$ with $u,v\in[0,\pi]$. By the law of cosines,
\begin{align*}
|a,x,b,y|
=\frac{|x-y||a-b|}{|x-a||b-y|}
=\frac{|x-y||e^{-ui}-re^{vi}|}{||x|-e^{-ui}||y-b|}
=\frac{|x-y|}{|y-b|}\sqrt{\frac{1+r^2-2r\cos(u+v)}{1+|x|^2-2|x|\cos(u)}}.
\end{align*}
By differentiation and the angle sum formula of the sine function,
\begin{align*}
&\frac{\partial}{\partial u}\left(\frac{1+r^2-2r\cos(u+v)}{1+|x|^2-2|x|\cos(u)}\right)\\
&=\frac{2r\sin(u+v)(1+|x|^2-2|x|\cos(u))-2|x|\sin(u)(1+r^2-2r\cos(u+v))}{(1+|x|^2-2|x|\cos(u))^2}\\
&=\frac{2(r(1+|x|^2)\sin(u+v)-|x|(1+r^2)\sin(u)-2r|x|\sin(v))}{(1+|x|^2-2|x|\cos(u))^2}.
\end{align*}
Denote $t=\sin(u)$. By the angle sum formula and the quadratic formula,
\begin{align*}
&r(1+|x|^2)\sin(u+v)-|x|(1+r^2)t-2r|x|\sin(v)=0\\
\quad\Leftrightarrow\quad
&r(1+|x|^2)\sin(v)\cos(u)=
(|x|(1+r^2)-r(1+|x|^2)\cos(v))t+2r|x|\sin(v)\\
\quad\Leftrightarrow\quad
&r^2(1+|x|^2)^2\sin^2(v)(1-t^2)
=((|x|(1+r^2)-r(1+|x|^2)\cos(v))t+2r|x|\sin(v))^2\\
\quad\Leftrightarrow\quad
&t=\frac{-c_2\pm\sqrt{c_2^2-4c_1c_3}}{2c_1}\quad\text{with}\\
&c_1=|x|^2(1+r^2)^2+r^2(1+|x|^2)^2-2r|x|(1+r^2)(1+|x|^2)\cos(v),\\
&c_1=4r|x|\sin(v)(|x|(1+r^2)-r(1+|x|^2)\cos(v)),\\
&c_3=-r^2\sin(v)^2(1-|x|^2)^2.
\end{align*}
Here, $0\leq t\leq1$ because $0\leq u\leq\pi$ and only the positive root for $t$ is a zero of the derivative above. It can be verified that the cross-ratio $|a,x,b,y|$ is majorized by $|e^{-ui},x,b,y|$ for $u$ such that $t=\sin(u)$ is the positive root in question. Clearly, this solution fulfills $0\leq u\leq\pi\slash2$ because otherwise $\cos(u)<0$ and the left side of the second equality above is negative, unlike the right side. Thus, we can choose $u=\arcsin(t)$, where $t$ is the positive root above. Since now $\arg(x)=0$, $0\leq\arg(y)\leq\pi$ and $\arg(a)\in(\{0\}\cup[3\pi\slash2,2\pi))$, we see that $\arg(b)$ must be on the interval $[\arg(y),\pi]$: If $0\leq\arg(b)\leq\arg(y)$, we could reflect $b$ over the line $L(0,y)$ to increase the cross-ratio $|a,x,b,y|$. Thus, the result follows.
\end{proof}

With Theorem \ref{thm_deltaInRing}, we can find the value of the M\"obius metric for points collinear with the origin in the annular ring $R(r,1)$.

\begin{corollary}\label{cor_deltaRingForCollinear}
For all points $x,y\in R(r,1)$ collinear with the origin such that $|y|\leq|x|$,
\begin{align*}
{\rm th}\frac{\delta_{R(r,1)}(x,y)}{2}
=\max\left\{
\frac{|x|-|y|}{1-|x||y|},\,
\frac{r(|x|-|y|)}{|x||y|-r^2},\,
\frac{(|x|-|y|)(1-r)}{2(1-|x|)(|y|-r)+(|x|-|y|)(1-r)}
\right\},
\end{align*}
if the value of the angle $\measuredangle XOY$ is 0, and
\begin{align*}
{\rm th}\frac{\delta_{R(r,1)}(x,y)}{2}
=\max\left\{
\frac{|x|+|y|}{1+|x||y|},\,
\frac{r(|x|+|y|)}{|x||y|+r^2},\,
\frac{(|x|+|y|)(1+r)}{2(1-|x|)(|y|-r)+(|x|+|y|)(1+r)}
\right\},
\end{align*}
if the value of the angle $\measuredangle XOY$ is $\pi$.
\end{corollary}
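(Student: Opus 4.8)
The plan is to derive the corollary directly from Theorem~\ref{thm_deltaInRing} by specializing to the two collinear configurations $\measuredangle XOY=0$ and $\measuredangle XOY=\pi$ and evaluating each of the three terms in the maximum. Since ${\rm th}$ is strictly increasing, applying ${\rm th}(\cdot\slash2)$ to the identity of Theorem~\ref{thm_deltaInRing} turns the maximum of the three distances into the maximum of the three quantities ${\rm th}(\cdot\slash2)$; for the third term I will use the elementary identity ${\rm th}\frac{\log(1+C)}{2}=\frac{C}{C+2}$, valid for $C\ge0$. Throughout I fix $\arg(x)=0$ without loss of generality, so that $x=|x|$ and $y=|y|$ when $\measuredangle XOY=0$, while $y=-|y|$ when $\measuredangle XOY=\pi$.

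For the first two (hyperbolic) terms I would compute using the explicit formula ${\rm th}\frac{\rho_{\B^2}(x,y)}{2}=\left|\frac{x-y}{1-x\overline{y}}\right|$ from \eqref{formula_rhoB}. The first term is immediate once $x,y$ are taken real. For the second term I note that $rx\slash|x|^2$ and $ry\slash|y|^2$ are the points collinear with the origin at distances $r\slash|x|$ and $r\slash|y|$ (with the same arguments as $x$ and $y$), both lying in $\B^2$ because $r<|y|\le|x|$; substituting these into the same formula and clearing denominators gives the stated expressions $\frac{r(|x|-|y|)}{|x||y|-r^2}$ and $\frac{r(|x|+|y|)}{|x||y|+r^2}$ in the two cases. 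These steps are routine.

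The substantive step is the third term, the supremum $\sup_{v\in[\mu,\pi]}\log(1+|e^{-u(v)i},|x|,re^{vi},|y|e^{\mu i}|)$. When $\measuredangle XOY=\pi$ the interval $[\mu,\pi]$ collapses to the single point $v=\pi$; there $\sin v=0$ forces $c_2=c_3=0$, hence $u(\pi)=\arcsin(0)=0$, so the extremal boundary points are $a=e^{-u(\pi)i}=1\in S^1$ and $b=re^{\pi i}=-r\in S^1(0,r)$, and a direct evaluation of $|1,|x|,-r,-|y||$ followed by ${\rm th}(\cdot\slash2)$ yields the third expression. When $\measuredangle XOY=0$ the supremum is a genuine one-variable optimization over $v\in[0,\pi]$; I would show it is attained at $v=0$, where again $\sin v=0$ gives $u(0)=0$ and hence $a=1$, $b=r$, producing the cross-ratio $\frac{(1-r)(|x|-|y|)}{(1-|x|)(|y|-r)}$ and, after ${\rm th}(\cdot\slash2)$, the claimed expression. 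The main obstacle is justifying that this endpoint is the maximizer. Here I would invoke the reflection symmetry over the real axis exactly as in the proof of Theorem~\ref{thm_deltaInRing}: for collinear points the extremal pair in the mixed case may be taken on the real axis, so that $a\in\{1,-1\}\subset S^1$ and $b\in\{r,-r\}\subset S^1(0,r)$, and a direct comparison of the four resulting cross-ratios shows that $(a,b)=(1,r)$ dominates the other three, the relevant inequalities reducing to $|x|>r$ and $|y|<1$. Combining the three evaluated terms then completes the proof.
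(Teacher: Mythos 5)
Your overall strategy coincides with the paper's: specialize Theorem \ref{thm_deltaInRing}, evaluate the two hyperbolic terms via \eqref{formula_rhoB}, use ${\rm th}(\log(1+C)/2)=C/(C+2)$, and for $\measuredangle XOY=\pi$ observe that $[\mu,\pi]$ collapses to the single point $v=\pi$, which forces $c_2=c_3=0$, $u(\pi)=0$ and hence $(a,b)=(1,-r)$. All of that is correct and is exactly what the paper does.

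The gap is in the case $\measuredangle XOY=0$. You claim that the reflection symmetry over the real axis, ``exactly as in the proof of Theorem \ref{thm_deltaInRing}'', lets you take the extremal pair on the real axis, reducing the problem to comparing the four configurations $a\in\{1,-1\}$, $b\in\{r,-r\}$. But the reflection argument in that proof does not give this: it only shows one may assume $\arg(a)\in\{0\}\cup[\pi,2\pi)$ and $\arg(b)\in[0,\pi]$, i.e., that $a$ and $b$ lie in opposite closed half-planes. It does not collapse the extremizer onto the real axis --- if it did, the one-variable optimization over $v\in[\mu,\pi]$ in Theorem \ref{thm_deltaInRing} would be superfluous in general. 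For $\mu=0$ the supremum runs over the whole interval $v\in[0,\pi]$ and could a priori be attained at an interior $v$; your proposal contains no argument ruling this out, and doing so through the explicit $\arcsin$ formula for $u(v)$ would be unpleasant. The paper closes exactly this point by a different, global argument: by the triangle inequality $|a-b|\le|x-a|+|x-y|+|y-b|$, so that
\begin{align*}
\frac{|a-b|}{|x-a||y-b|}\le\frac{|x-a|+|x-y|+|y-b|}{|x-a||y-b|},
\end{align*}
and since $(k+u+v)/(ku)$ is decreasing in $k>0$ (and symmetrically in $u$), the right-hand side is maximized by minimizing $|x-a|$ and $|y-b|$ separately, i.e., at $a=1$, $b=r$, where the bound is attained because the segments then line up along the real axis and the sum equals $1-r=|a-b|$. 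This proves $\sup_{a\in S^1,\,b\in S^1(0,r)}|a,x,b,y|=|1,x,r,y|$ over \emph{all} admissible pairs, not just real ones. If you replace your symmetry step by this argument (or by any genuine proof that the supremum in Theorem \ref{thm_deltaInRing} occurs at $v=0$ when $\mu=0$), the rest of your proof goes through as written.
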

\begin{proof}
Suppose without loss of generality that $x\in R(r,1)\cap[0,1]$. Consider first the case where $y$ is on the line segment $[0,1]$, too. It can be shown by differentiation that $(k+u+v)\slash(ku)$ with constants $u,v>0$ is decreasing with respect to $k>0$. Consequently, for all $a\in S^1$ and $b\in S^1(0,r)$,
\begin{align*}
\frac{|a-b|}{|x-a||y-b|}
\leq\frac{|x-a|+|x-y|+|y-b|}{|x-a||y-b|}
\leq\frac{|x-1|+|x-y|+|y-r|}{|x-1||y-r|}
=\frac{|1-r|}{|x-1||y-r|}
\end{align*}
and therefore $\sup_{a\in S^1,\,b\in S^1(0,r)}|a,x,b,y|=|1,x,r,y|$ now. If $y\in R(r,1)\cap[0,-1]$ instead, then by Theorem \ref{thm_deltaInRing},
\begin{align*}
\sup_{a\in S^1,\,b\in S^1(0,r)}|a,x,b,y|
=\sup_{v\in[\mu,\pi]}|e^{-u(v)i},|x|,re^{vi},|y|e^{\mu i}|
=|e^{-u(\pi)i},x,re^{\pi i},y|
=|1,x,-r,y|,
\end{align*}
because the value of the angle $\measuredangle XOY$ is $\mu=\pi$ and therefore $v\in[\mu,\pi]=\{\pi\}$ here. The result now follows from Theorem \ref{thm_deltaInRing}, the formula \eqref{formula_rhoB} and the observation that
\begin{align*}
{\rm th}\frac{\log(1+k\slash h)}{2}
=\frac{k}{2h+k}
\quad\text{for}\quad k,h>0.
\end{align*}
\end{proof}

Next, we will study the Euclidean midpoint rotation for the M\"obius metric but, first, consider the following conjecture.

\begin{conjecture}\label{conj_decCr}
For all $r<k<1$ and $0<q<\min\{k-r,1-k\}$, the supremum 
\begin{align*}
\sup_{a\in S^1,\,b\in S^1(0,r)}|a,qe^{\mu i}+k,b,qe^{(\pi+\mu)i}+k|  
\end{align*}
is decreasing with respect to $\mu\in[0,\pi\slash2]$.
\end{conjecture}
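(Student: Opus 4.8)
The plan is to prove that $F(\mu):=\sup_{a\in S^1,\,b\in S^1(0,r)}|a,x,b,y|$, with $x=x(\mu)=k+qe^{\mu i}$ and $y=y(\mu)=k-qe^{\mu i}$, is nonincreasing on $[0,\pi\slash2]$; since $t\mapsto\log(1+t)$ is increasing and this $F$ is exactly the third entry in the maximum of Theorem \ref{thm_deltaInRing}, that gives the conjecture. Two normalizations precede the real work. First, reflection in the real axis is a M\"obius transformation preserving both $S^1$ and $S^1(0,r)$ while sending $x(\mu),y(\mu)$ to $x(-\mu),y(-\mu)$, so $F$ is even and $F'(0)=0$; this is a consistency check pinning $\mu=0$ as a critical point. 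Second, rotations about the origin preserve $F$ (Theorem \ref{thm_seitminvariant}), so $F$ depends on $x,y$ only through $|x|,|y|$ and $\tilde\mu=\measuredangle XOY$, and one computes $|x|^2=k^2+q^2+2kq\cos\mu$, $|y|^2=k^2+q^2-2kq\cos\mu$, $\cos\tilde\mu=(k^2-q^2)\slash\sqrt{(k^2+q^2)^2-4k^2q^2\cos^2\mu}$, so that $|x|$ decreases, $|y|$ increases and $\tilde\mu$ increases along the rotation. As these move in competing directions, no separate-monotonicity argument can succeed and the constraint $|x-y|=2q$ must genuinely be used.

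The engine is the envelope theorem. Writing $|a,x,b,y|=2q|a-b|\slash(|a-x||b-y|)$, observe that for a fixed maximizing pair $(a^\ast,b^\ast)$ the numerator $2q|a^\ast-b^\ast|$ is free of $\mu$, so all $\mu$-dependence lies in $P(\mu):=|a^\ast-x(\mu)|\,|b^\ast-y(\mu)|$. Being a supremum of smooth functions over the compact $S^1\times S^1(0,r)$, $F$ is locally Lipschitz, hence differentiable almost everywhere, and at each such point $F'(\mu)=-2q|a^\ast-b^\ast|\,P'(\mu)\slash P(\mu)^2$ for any maximizer; absolute continuity then reduces everything to $P'(\mu)\ge0$. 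Using $\dot x=iqe^{\mu i}$ and $\dot y=-iqe^{\mu i}$, a short computation shows that with $d_A=|a^\ast-x|$, $d_B=|b^\ast-y|$ the sign of $P'$ is the sign of
\begin{align*}
d_A^{\,2}\,\mathrm{Im}\!\big[(b^\ast-y)e^{-\mu i}\big]-d_B^{\,2}\,\mathrm{Im}\!\big[(a^\ast-x)e^{-\mu i}\big].
\end{align*}
Thus the whole conjecture collapses to proving that this quantity is nonnegative for $\mu\in(0,\pi\slash2)$.

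To sign it I would feed in the optimality (Lagrange) conditions on the two circles. Vanishing of the tangential derivatives yields $\mathrm{Im}[a^\ast\overline{b^\ast}]\slash|a^\ast-b^\ast|^2=\mathrm{Im}[a^\ast\overline{x}]\slash d_A^{\,2}$ on $S^1$ and $\mathrm{Im}[b^\ast\overline{a^\ast}]\slash|a^\ast-b^\ast|^2=\mathrm{Im}[b^\ast\overline{y}]\slash d_B^{\,2}$ on $S^1(0,r)$; the first is precisely the relation whose positive root is the $u(v)$ of Theorem \ref{thm_deltaInRing}, while the second characterizes the remaining optimal angle. To these I would add the orientation extracted in that same proof: after rotating so that $\arg(x)=0$, the maximizer has $a^\ast=e^{-ui}$ with $u\in[0,\pi\slash2]$ and $b^\ast=re^{vi}$ with $v\in[\tilde\mu,\pi]$, so $a^\ast$ and $b^\ast$ lie on opposite sides of the line $L(0,x)$. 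Intuitively this opposite-side placement is what forces the two imaginary parts above to cooperate and make the displayed quantity nonnegative.

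The main obstacle is that the axis governing the optimum and the axis governing the motion are misaligned. The half-plane split from Theorem \ref{thm_deltaInRing} is adapted to the direction $x\slash|x|$ toward the origin, whereas the Euclidean midpoint rotation moves $x$ along $\dot x=i(x-k)$, tangent to $S^1(k,q)$; these two directions disagree whenever $k\neq0$, so the clean orientation does not immediately sign the displayed expression, and substituting the implicit optimality relations (the quartic root $\arcsin((-c_2+\sqrt{c_2^2-4c_1c_3})\slash(2c_1))$ and its companion) does not visibly collapse it. My intended route is to first settle the endpoints in closed form, using Corollary \ref{cor_deltaRingForCollinear} at $\mu=0$ (where $F(0)=|1,x,r,y|$) and Theorem \ref{thm_deltaInRing} at $\mu=\pi\slash2$ with $|x|=|y|=\sqrt{k^2+q^2}$ and $\tilde\mu=2\arctan(q\slash k)$, thereby anchoring $F(0)\ge F(\pi\slash2)$; and then to attack the interior through the single-variable reduction $F(\mu)=\sup_{v\in[\tilde\mu,\pi]}|e^{-u(v)i},|x|,re^{vi},|y|e^{\tilde\mu i}|$, analysing its derivative along the chain $\mu\mapsto(|x|,|y|,\tilde\mu)\mapsto v^\ast\mapsto u(v^\ast)$. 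I expect establishing the sign of the displayed quantity, equivalently the monotonicity of this reduced one-variable supremum, to be the step that resists a closed-form argument and to demand either a delicate case analysis in $(r,k,q,\mu)$ or a computer-assisted verification of the resulting scalar inequality.
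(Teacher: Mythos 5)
The first thing to observe is that the statement you were asked to prove is not proved in the paper at all: it appears there as Conjecture \ref{conj_decCr}, is explicitly left open, and is used only conditionally (Lemma \ref{lem_emrForDelta} begins ``Suppose that Conjecture \ref{conj_decCr} holds''). So there is no proof of the paper's to compare yours against, and a genuine complete proof would be a contribution beyond the paper itself.

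Your proposal, however, is not such a proof; it is a correct reduction followed by an explicit admission that the decisive inequality is left unverified. The setup is sound: the reflection and rotation normalizations, the Danskin/envelope argument (exploiting that $|x-y|=2q$ is constant in $\mu$, so all $\mu$-dependence sits in $P(\mu)=|a^\ast-x|\,|b^\ast-y|$), the computation that the sign of $P'(\mu)$ equals the sign of $d_A^{\,2}\,\mathrm{Im}\bigl[(b^\ast-y)e^{-\mu i}\bigr]-d_B^{\,2}\,\mathrm{Im}\bigl[(a^\ast-x)e^{-\mu i}\bigr]$, and the first-order conditions on the two circles all check out. But the conjecture is \emph{equivalent} to the nonnegativity of that displayed quantity at maximizers, and you never establish it: you state that substituting the optimality relations ``does not visibly collapse it,'' that the axis of the midpoint rotation and the axis adapted to the optimizer are misaligned, and that you expect this step to require a delicate case analysis or computer-assisted verification. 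That step is the entire mathematical content of the conjecture. The endpoint comparison $F(0)\geq F(\pi\slash2)$ (itself only sketched, not carried out) would in any case not yield monotonicity on the interior of the interval. So after your work the problem is in the same state as before, merely rewritten in an equivalent differential form; the reduction may be useful scaffolding for a future proof, but as a proof attempt it has a genuine gap located exactly where the difficulty lives.
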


If Conjecture \ref{conj_decCr} holds, the M\"obius metric fulfills the same inequality as the triangular ratio metric in Theorem \ref{thm_emrForS} and we can use the results of Corollary \ref{conj_decCr} to create upper bounds for the M\"obius metric defined in the annular ring $R(r,1)$.

\begin{lemma}\label{lem_emrForDelta}
Choose distinct points $x,y\in R(r,1)$ so that $|y|\leq|x|$ and $x_0,y_0,x_1,y_1\in R(r,1)$, when these points are as in Definition \ref{def_emr}. Suppose that Conjecture \ref{conj_decCr} holds. Now, the M\"obius metric fulfills
\begin{align*}
\delta_{R(r,1)}(x_0,y_0)
\leq\delta_{R(r,1)}(x,y)
\leq\delta_{R(r,1)}(x_1,y_1).
\end{align*}
\end{lemma}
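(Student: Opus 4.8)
The plan is to reduce the statement to the three-term maximum given by Theorem \ref{thm_deltaInRing} and then show, term by term, that each piece is monotone under the Euclidean midpoint rotation. Recall that for points with $|y|\le|x|$ we have
\begin{align*}
\delta_{R(r,1)}(x,y)=\max\Big\{\rho_{\B^2}(x,y),\,\rho_{\B^2}\big(\tfrac{rx}{|x|^2},\tfrac{ry}{|y|^2}\big),\,\Sigma(x,y)\Big\},
\end{align*}
where $\Sigma(x,y)=\sup_{a\in S^1,\,b\in S^1(0,r)}\log(1+|a,x,b,y|)$. Since the rotated points $x_0,y_0$ and $x_1,y_1$ share the same Euclidean midpoint $k$ and radius $q$ as $x,y$, the idea is to prove that each of the three quantities is, respectively, minimized at the configuration $(x_0,y_0)$ (where the chord is perpendicular to $L(0,k)$) and maximized at the collinear configuration $(x_1,y_1)$.

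First I would handle the two hyperbolic terms. For $\rho_{\B^2}$, the inequality $\rho_{\B^2}(x_0,y_0)\le\rho_{\B^2}(x,y)\le\rho_{\B^2}(x_1,y_1)$ for points on a fixed Euclidean circle $S^1(k,q)\subset\B^2$ should follow from the monotonicity results for the hyperbolic metric on a disk already cited in the context of \cite{sinb}; the extremes are attained by the diameter collinear with the origin (maximum) and perpendicular to it (minimum). For the second term, I would invoke the inversion $f(z)=rz/|z|^2$ of Example \ref{ex_mobf}: it is a M\"obius transformation carrying $R(r,1)$ onto itself and interchanging the two boundary circles, so $\rho_{\B^2}(f(x),f(y))=\delta_{\overline{\R}^2\setminus\overline{B}^2(0,r)}(x,y)$. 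The key observation here is that $f$ maps the circle $S^1(k,q)$ to another Euclidean circle, and — because $f$ preserves the line $L(0,k)$ and the ring — it sends the midpoint-rotated family $\{x_0,y_0,x_1,y_1\}$ to the corresponding midpoint-rotated family of $f(x),f(y)$. Applying the disk monotonicity to the images then yields the same chain of inequalities for the second term.

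The third term $\Sigma$ is where Conjecture \ref{conj_decCr} enters, and this is the main obstacle. Writing the chord in the form $qe^{\mu i}+k$ and $qe^{(\pi+\mu)i}+k$, the configuration $(x_0,y_0)$ corresponds to $\mu=\pi/2$ and $(x_1,y_1)$ to $\mu=0$. The conjecture asserts precisely that $\Sigma$, as a function of the rotation angle, is decreasing on $[0,\pi/2]$, so that its minimum over the family is at $\mu=\pi/2$ (giving $x_0,y_0$) and its maximum at $\mu=0$ (giving $x_1,y_1$). Granting the conjecture, the bound $\Sigma(x_0,y_0)\le\Sigma(x,y)\le\Sigma(x_1,y_1)$ is immediate.

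Finally I would assemble the three parts. Since the maximum of finitely many functions that each satisfy the same two-sided bound also satisfies that bound, combining the three monotonicity statements yields
\begin{align*}
\delta_{R(r,1)}(x_0,y_0)\le\delta_{R(r,1)}(x,y)\le\delta_{R(r,1)}(x_1,y_1),
\end{align*}
which is the claim. One technical point to verify is that the term-by-term extremes are attained at the \emph{same} endpoint configurations $(x_0,y_0)$ and $(x_1,y_1)$ for all three quantities simultaneously — which they do, since in each case the perpendicular chord is the minimizer and the collinear (radial) chord the maximizer. The genuinely hard and unresolved part is the reduction to Conjecture \ref{conj_decCr}: establishing monotonicity of the optimized cross-ratio $\Sigma$ in the rotation angle requires differentiating through the inner supremum over $a,b$, and I expect the envelope to be unwieldy, which is exactly why the result is stated conditionally.
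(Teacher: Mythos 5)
Your strategy coincides with the paper's: parametrize $x=qe^{\mu i}+k$, $y=qe^{(\pi+\mu)i}+k$ so that $(x_1,y_1)$ corresponds to $\mu=0$ and $(x_0,y_0)$ to $\mu=\pi\slash2$, decompose $\delta_{R(r,1)}$ into the three components of Theorem \ref{thm_deltaInRing}, prove each component is decreasing in $\mu$ (the cross-ratio term exactly by Conjecture \ref{conj_decCr}), and pass to the maximum. The assembly step needs no care at all: componentwise two-sided bounds pass to the maximum regardless of which component attains it, so your closing ``technical point'' about simultaneous extremizers is automatic rather than something to verify.

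The genuine gap is in your second component. You claim that the inversion $f(z)=rz\slash|z|^2$ ``sends the midpoint-rotated family $\{x_0,y_0,x_1,y_1\}$ to the corresponding midpoint-rotated family of $f(x),f(y)$'' and then apply the disk monotonicity to the images. This claim is false: inversions preserve neither Euclidean midpoints nor antipodality on circles. Concretely, for real $k>q>0$, the image circle $f(S^1(k,q))$ has its center at $rk\slash(k^2-q^2)$, the midpoint of $f(k+q)=r\slash(k+q)$ and $f(k-q)=r\slash(k-q)$, whereas $f(x_0)=r(k+qi)\slash(k^2+q^2)$ and $f(y_0)=r(k-qi)\slash(k^2+q^2)$ have midpoint $rk\slash(k^2+q^2)$; hence $f(x_0),f(y_0)$ are not a diametrical pair of the image circle (only the collinear pair maps to one), and the disk result cannot be ``applied to the images.'' The conclusion you want is nevertheless true, and the repair is a direct computation in the spirit of what the paper does for the first component: writing $f(z)=r\slash\overline{z}$ gives
\begin{align*}
{\rm th}\frac{\rho_{\B^2}(f(x),f(y))}{2}
=\frac{r|x-y|}{|\overline{x}y-r^2|}
=\frac{2qr}{\sqrt{(k^2-q^2-r^2)^2+4k^2q^2\sin^2(\mu)}},
\end{align*}
which is manifestly decreasing on $[0,\pi\slash2]$. (The paper argues this step by invariance of the hyperbolic metric and of angle magnitudes under inversion, which reduces to the same computation.) A smaller weakness: for the first component you only gesture at ``monotonicity results cited in the context of \cite{sinb}'', but the results cited from there concern $s_{\B^2}$, not $\rho_{\B^2}$; what you need is the paper's one-line computation ${\rm th}(\rho_{\B^2}(x,y)\slash2)=2q\slash\sqrt{(1+q^2-k^2)^2+4k^2q^2\sin^2(\mu)}$, which is decreasing in $\mu$.
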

\begin{proof}
Denote $k=(x+y)\slash2$ and $q=|x-y|\slash2$. Suppose without loss of generality that $k\in(1,r)$ and $x=qe^{\mu i}+k$, $y=qe^{(\pi+\mu)i}+k$ for some $0\leq\mu\leq\pi\slash2$. Now, $(x,y)=(x_0,y_0)$ if $\mu=\pi\slash2$ and $(x,y)=(x_1,y_1)$ if $\mu=0$. Recall from Theorem \ref{thm_deltaInRing} and its proof that the expression of $\delta_{R(r,1)}$ can be written as a maximum expression with three components:
\begin{align*}
\delta_{R(r,1)}=\max\{
\rho_{\B^2}(x,y),
\rho_{\overline{\R}^2\backslash\overline{B}^2(0,r)}(x,y), 
\sup_{a\in S^1,\,b\in S^1(0,r)}\log(1+|a,x,b,y|)
\}.
\end{align*}
The result follows if each of these three components is decreasing with respect to $\mu$ and we already assumed that the cross-ratio fulfills this condition. Furthermore, we can write
\begin{align*}
{\rm th}\frac{\rho_{\B^2}(x,y)}{2}
&=\left|\frac{x-y}{1-x\overline{y}}\right|
=\frac{2q}{|1-(qe^{\mu i}+k)(qe^{(\pi+\mu)i}+k)|}\\
&=\frac{2q}{|1+q^2-k^2-2kq\sin(\mu)i|}
=\frac{2q}{\sqrt{(1+q^2-k^2)^2+4k^2q^2\sin^2(\mu)}},
\end{align*}
and this is clearly decreasing with respect to $\mu\in[0,\pi\slash2]$. Suppose next that $[x,y]$ is some diameter of $S^1(k^*,q^*)\subset\R^2\backslash\overline{\B}^2$ where $q^*,k^*>0$. This circle can be mapped with an inversion in the unit disk onto another circle inside the unit disk, and both the hyperbolic metric and angle magnitudes are invariant under inversions. Thus, it follows directly from above that $\rho_{\overline{\R}^2\backslash\overline{\B}^2}(q^*e^{\mu i}+k^*,q^*e^{(\pi+\mu)i}+k^*)$ is decreasing with respect to $\mu\in[0,\pi\slash2]$ and, since this property is invariant under the stretching by a factor $r>0$, $\rho_{\overline{\R}^2\backslash\overline{B}^2(0,r)}(q^*e^{\mu i}+k^*,q^*e^{(\pi+\mu)i}+k^*)$ is decreasing with respect to $\mu$, too.
\end{proof}

\begin{remark}
Also the $j^*$-metric fulfills the inequality related to the Euclidean midpoint rotation, as can be trivially seen from its definition in \eqref{jinRing}.
\end{remark}

\begin{figure}[ht!]%
 \centering
 \hspace{1cm}
 \subfloat[]{\includegraphics[scale=0.65]{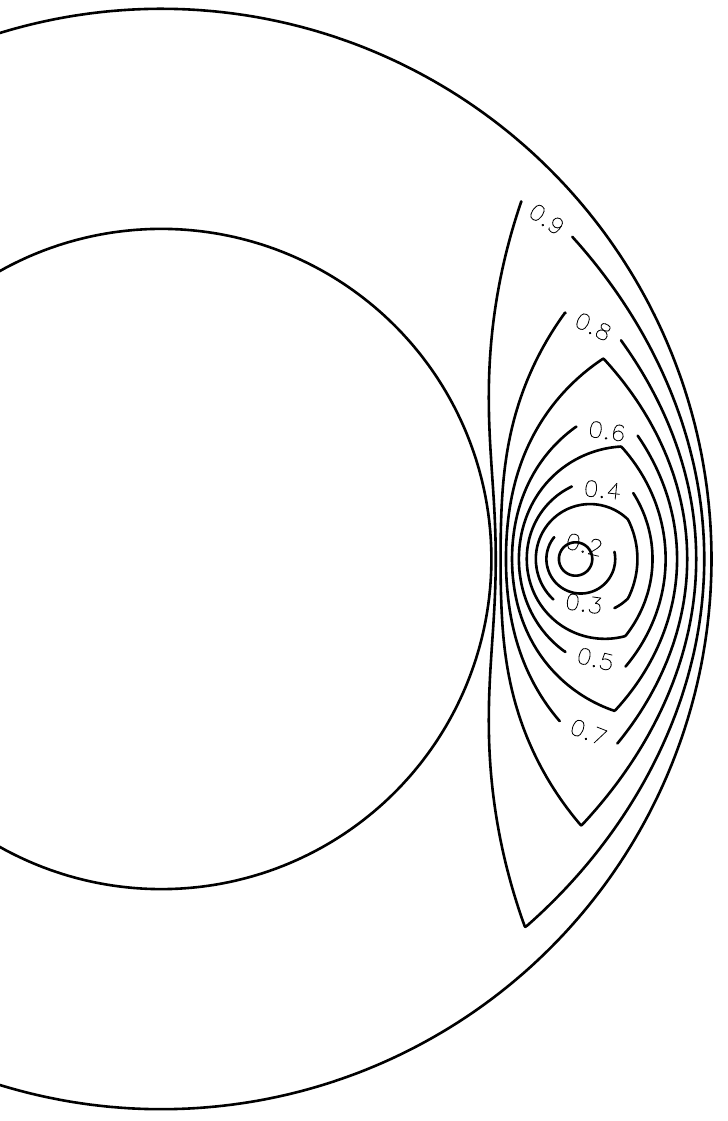}\label{figu1}}\hspace{1.3cm}%
 \subfloat[]{\includegraphics[scale=0.65]{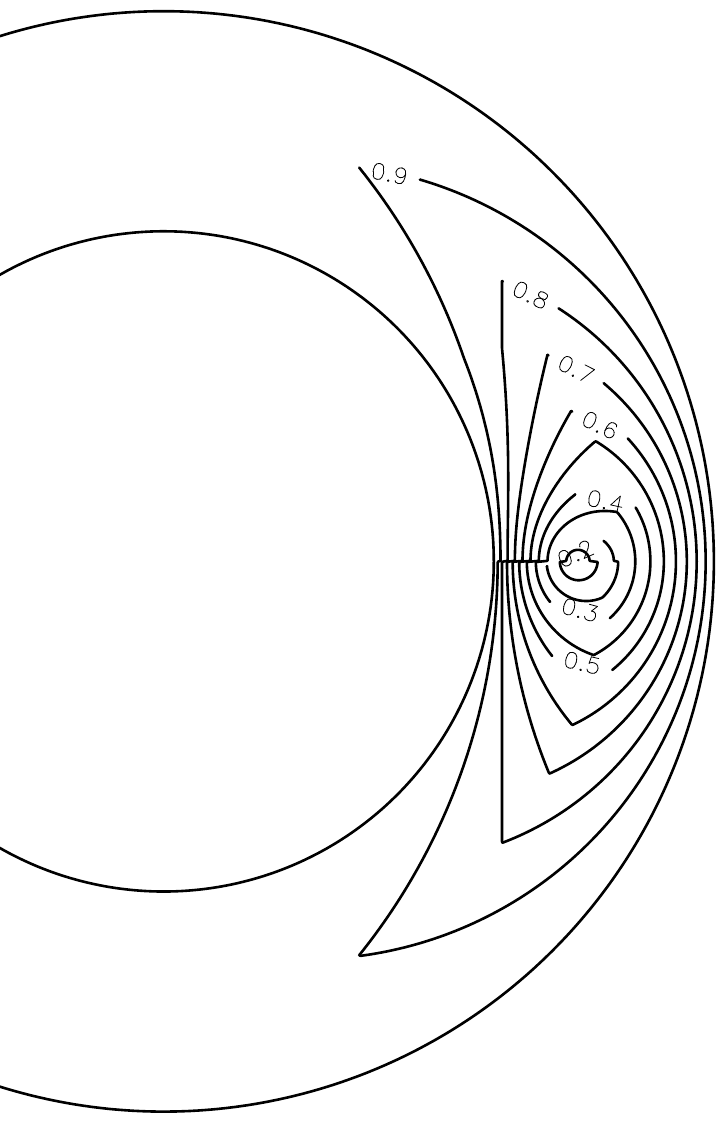}\label{figd1}}\\
 \subfloat[]{\includegraphics[scale=0.65]{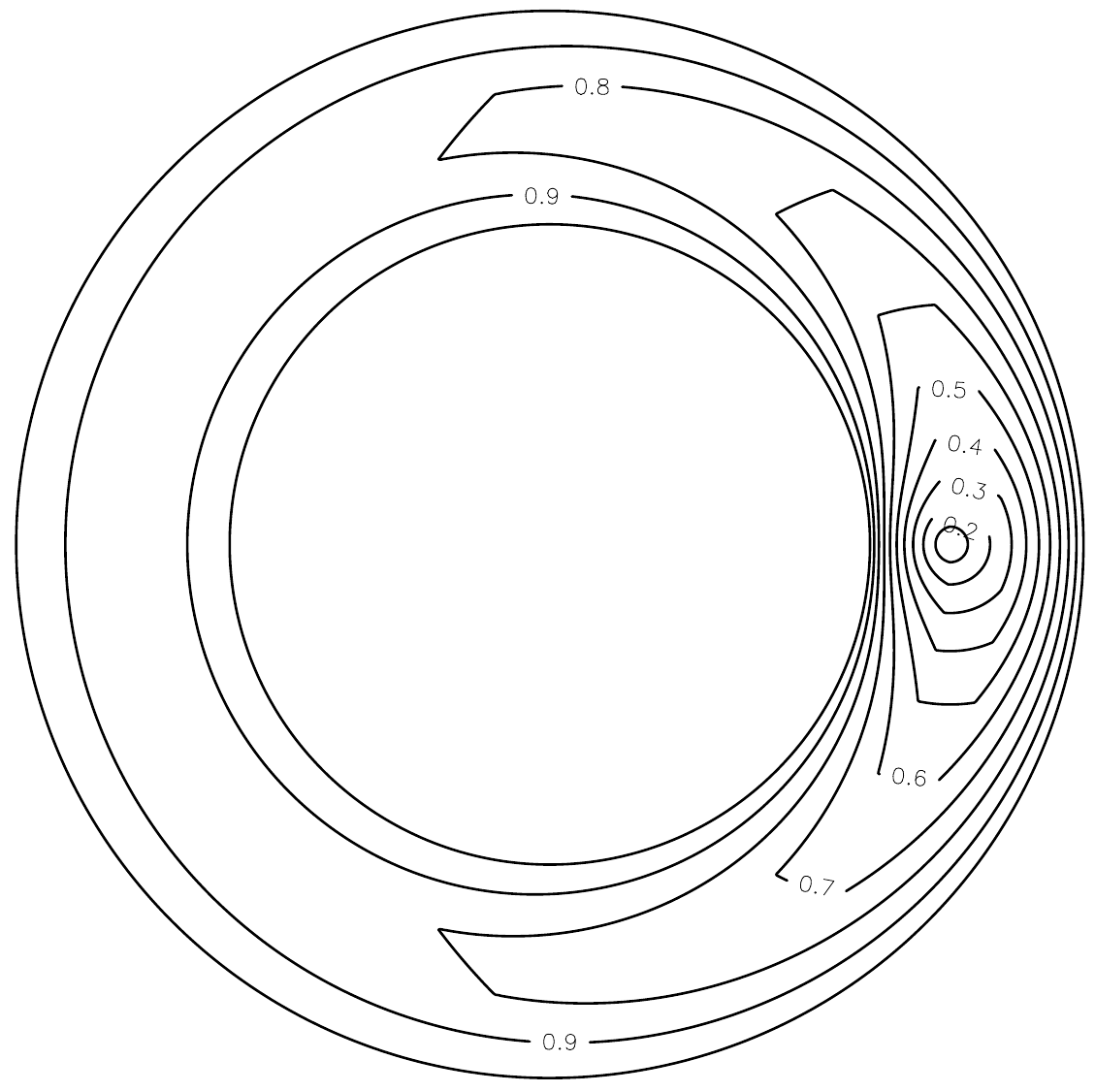}\label{figj1}}%
 \caption{Circles $\{y\in R(r,1)\text{ }|\text{ }d(x,y)=\ell\}$ drawn with the triangular ratio metric $s_{R(r,1)}(x,y)$ (A), the modification ${\rm th}(\delta_{R(r,1)}(x,y)\slash2)$ of the M\"obius metric (B), and the $j^*$-metric $j^*_{R(r,1)}(x,y)$ (C), when $x=0.75$, $r=0.6$ and $\ell=0.2,0.3,...,0.9$}%
 \label{figc}%
\end{figure}

Next, let us compare the three different hyperbolic type metrics in an annular ring $R(r,1)$. The different properties of the triangular ratio metric, the M\"obius metric and the $j^*$-metric can be visually demonstrated by plotting the metrical circles
\begin{align}\label{circs}
\begin{split}
S_s(x,\ell)&=\{y\in R(r,1)\text{ }|\text{ }s_{R(r,1)}(x,y)=\ell\},\\
S_{\delta^*}(x,\ell)&=\{y\in R(r,1)\text{ }|\text{ }{\rm th}(\delta_{R(r,1)}(x,y)\slash2)=\ell\},\\
S_{j^*}(x,\ell)&=\{y\in R(r,1)\text{ }|\text{ }j^*_{R(r,1)}(x,y)=\ell\}
\end{split}
\end{align}
for a few different values of $\ell\in(0,1)$. Note that the circles of the M\"obius metric are drawn by fixing their radius to be the hyperbolic tangent function with the value of the M\"obius metric divided by two as an input, so that these circles are comparable to those of the other two metrics which can only attain values from the interval $[0,1]$.   

Figure \ref{figc} shows these metric circles in \eqref{circs}, when their center is fixed to $x=0.75$, the inner radius of the annular ring is $r=0.6$ and the radii of the metrical circles are $\ell=0.2,0.3,...,0.9$. All these figures where drawn in R-Studio by using a grid of the size 1,000$\times$1,000 test points and the contourplot function \emph{contour}. The values of the triangular ratio metric and the M\"obius metric were computed with the optimization function \emph{optimize} with the help of Theorem \ref{thm_deltaInRing}, and the values of the $j^*$-metric directly obtained with the formula
\begin{align}\label{jinRing}
j^*_{R(r,1)}(x,y)=\frac{|x-y|}{|x-y|+2\min\{|x|-r,|y|-r,1-|x|,1-|y|\}}.    
\end{align}

As we can see from Figure \ref{figc}, the metrical circles drawn with the triangular ratio metric and the M\"obius metric resemble each other more than they do the circles of the $j^*$-metric. Figure \ref{figc}(a) shows that the triangular ratio metric disks are \emph{starlike} with respect to their center $x$: For all points $y\in R(r,1)$ with $s_{R(r,1)}(x,y)<\ell\in(0,1)$, $[x,y]\cap S_s(x,\ell)=\varnothing$. This is a general property of the triangular ratio metric \cite[p. 206]{hkv}, which follows from the fact that $s_G(x,y)=1$ whenever $[x,y]\cap\partial G\neq\varnothing$. We also notice from Figure \ref{figc} that the metric circles drawn with the smallest radii $\ell=0.2$ resemble Euclidean circles while the shape of the domain affects more clearly the circles with larger radii, which is a common property of a hyperbolic type metric, see \cite[Ch. 13, pp. 239-259]{hkv}.

Consider then the following inequality between these three metrics.

\begin{theorem}\label{thm_ringIne}
For all $x,y\in R(r,1)$,
\begin{align*}
\frac{1}{2}s_{R(r,1)}(x,y)
\leq j^*_{R(r,1)}(x,y)
\leq {\rm th}\frac{\delta_{R(r,1)}(x,y)}{2}
\leq 2j^*_{R(r,1)}(x,y)
\leq 2s_{R(r,1)}(x,y),
\end{align*}
where the constants $1\slash2$ and $1$ are sharp when $r\to0^+$, and the constants $2$ are sharp for all values of $r\in(0,1)$.
\end{theorem}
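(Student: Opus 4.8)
The plan is to split the statement into two essentially independent parts: the chain of inequalities, which in fact holds in every domain, and the sharpness of the constants, which is where the geometry of $R(r,1)$ enters. For the inequalities, the two middle estimates $j^*_{R(r,1)}(x,y)\le{\rm th}(\delta_{R(r,1)}(x,y)\slash2)\le 2j^*_{R(r,1)}(x,y)$ are exactly Theorem \ref{thm_pr_ineqs}(2), so only the outermost two require an argument. Abbreviating $G=R(r,1)$ and $m=\min\{d_G(x),d_G(y)\}$, I would compare the quantity $\inf_{z\in\partial G}(|x-z|+|z-y|)$ in the denominator of $s_G$ with the denominator $|x-y|+2m$ of $j^*_G$. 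Every boundary point satisfies $|x-z|+|z-y|\ge d_G(x)+d_G(y)\ge 2m$ and also $|x-z|+|z-y|\ge|x-y|$, so the infimum is at least $\max\{|x-y|,2m\}\ge\frac12(|x-y|+2m)$, which gives $s_G(x,y)\le 2j^*_G(x,y)$, i.e. the leftmost inequality. Conversely, letting $z_0$ be the nearest boundary point to whichever of $x,y$ attains $m$, the triangle inequality gives $\inf_{z}(|x-z|+|z-y|)\le|x-y|+2m$ and hence $j^*_G(x,y)\le s_G(x,y)$, the rightmost inequality. Concatenating these with Theorem \ref{thm_pr_ineqs}(2) yields the whole display.

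For the constants $2$, which are claimed sharp for every fixed $r$, the key observation is that a factor $2$ appears precisely when the two boundary circles contribute equally, so I would place the points on the middle circle $|z|=(1+r)\slash2$, where $d_G(x)=d_G(y)=D:=(1-r)\slash2$. Concretely, set $x=\frac{1+r}{2}e^{i\theta}$, $y=\overline{x}$ and let $\theta\to0^+$. Then $|x-y|=(1+r)\sin\theta\to0$, so $j^*_{R(r,1)}(x,y)\sim|x-y|\slash(2D)$, and the third case of Lemma \ref{lem_sRingConjugate} gives $s_{R(r,1)}(x,y)\sim|x-y|\slash(2D)$ as well. To control $\delta$ I would not optimize the cross ratio but simply feed in the radial projections $a=e^{i\theta}\in S^1$ and $b=re^{-i\theta}\in S^1(0,r)$, for which $|a-x|=|b-y|=D$ and $|a-b|\to1-r=2D$; this yields $|a,x,b,y|\sim 2|x-y|\slash D$, exactly twice the ratio $|x-y|\slash D$ governing $j_{R(r,1)}$. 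Using $\delta_{R(r,1)}(x,y)\ge\log(1+|a,x,b,y|)$ together with the identity ${\rm th}(\log(1+k\slash h)\slash2)=k\slash(2h+k)$ gives ${\rm th}(\delta_{R(r,1)}(x,y)\slash2)\gtrsim 2j^*_{R(r,1)}(x,y)$ in the limit, and since the reverse bound $\le2j^*_{R(r,1)}$ is already proved, the ratio tends to $2$. Because $s\sim j^*$ along this family, the same family makes ${\rm th}(\delta_{R(r,1)}\slash2)\le2s_{R(r,1)}$ sharp too, and all of this is valid for each $r\in(0,1)$.

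For the lower constants $\frac12$ and $1$, I would use a single family that degenerates only as $r\to0^+$: the diametrically opposite points $x=h$, $y=-h$ on the real axis, with $h\to0^+$ taken after $r\to0^+$. Since $[x,y]$ passes through the hole and meets $\overline{B}^2(0,r)$, Lemma \ref{lem_sRingConjugate} (first case, $\cos(\mu\slash2)=0<r\slash h$) gives $s_{R(r,1)}(x,y)=1$; formula \eqref{jinRing} gives $j^*_{R(r,1)}(x,y)=h\slash(2h-r)$, which tends to $\frac12$ as $r\to0$; and the $\measuredangle XOY=\pi$ formula of Corollary \ref{cor_deltaRingForCollinear} shows that its dominant (third) term tends to $1\slash(2-h)$, so ${\rm th}(\delta_{R(r,1)}(x,y)\slash2)\to\frac12$ after letting $h\to0$. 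Thus $\frac12 s_{R(r,1)}$, $j^*_{R(r,1)}$ and ${\rm th}(\delta_{R(r,1)}\slash2)$ all converge to $\frac12$ simultaneously, so neither the constant $\frac12$ in $\frac12 s\le j^*$ nor the constant $1$ in $j^*\le{\rm th}(\delta\slash2)$ can be improved in the limit $r\to0^+$; a direct computation with circles shows that exact equality in $\frac12 s\le j^*$ is impossible for $r>0$, which is why this constant is only asymptotically sharp.

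The main obstacle will be the M\"obius metric, since $\delta_{R(r,1)}$ has no elementary closed form. The plan sidesteps this in both families: in the middle-circle family a single explicit pair $(a,b)$ of boundary points already produces a lower bound matching the universal upper bound ${\rm th}(\delta\slash2)\le2j^*$, so no supremum over the cross ratio is needed; and in the antipodal family the points are collinear with the origin, so Corollary \ref{cor_deltaRingForCollinear} makes $\delta$ fully explicit. What remains to check carefully is only the first-order asymptotics of the Euclidean quantities $|a-x|$, $|b-y|$, $|a-b|$, $|x-y|$ and of the denominators in Lemma \ref{lem_sRingConjugate} as $\theta\to0$ (respectively as $r,h\to0$); I expect this bookkeeping, rather than any conceptual difficulty, to be the only delicate point.
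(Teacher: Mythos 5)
Your proposal is correct, and its overall skeleton --- the universal chain assembled from $j^*_G\le s_G\le 2j^*_G$ and Theorem \ref{thm_pr_ineqs}(2), followed by two explicit extremal families for sharpness --- is the same as the paper's; the differences lie in the details, and one of them is genuine. For the chain itself, the paper simply cites \cite[Lemma 2.1 \& Lemma 2.2]{hvz}, while you reprove $j^*\le s\le 2j^*$ by elementary triangle-inequality estimates; both are fine. For the constants $1\slash2$ and $1$, you and the paper use essentially the same family: antipodal points evaluated via Corollary \ref{cor_deltaRingForCollinear}. The paper couples the points to the parameter, $x=\sqrt{r}$, $y=-\sqrt{r}$, so a single limit $r\to0^+$ suffices, whereas you keep $x=h$, $y=-h$ fixed and take iterated limits ($r\to0^+$, then $h\to0^+$); this is equally valid, since for any $\varepsilon>0$ one can fix $h$ small and then all sufficiently small $r$ give ratios within $\varepsilon$ of the constants. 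The genuine divergence is at the constants $2$: the paper stays on the real axis with the radially separated pair $x,y=(1+r)\slash2\pm h$, $h\to0^+$, so that $s$, $j^*$ and $\delta$ are all in closed form (Proposition \ref{prop_sRingColli}, formula \eqref{jinRing}, Corollary \ref{cor_deltaRingForCollinear}) and the ratio tending to $2$ falls out by direct computation; you instead rotate the pair onto the middle circle, $x=\tfrac{1+r}{2}e^{i\theta}$, $y=\overline{x}$, $\theta\to0^+$, get $s$ from the third case of Lemma \ref{lem_sRingConjugate}, and --- crucially --- never compute $\delta$ exactly: you lower-bound it by the single cross ratio with $a=e^{i\theta}$, $b=re^{-i\theta}$ and squeeze against the already-established upper bound ${\rm th}(\delta_{R(r,1)}\slash2)\le2j^*_{R(r,1)}$. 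That trick makes this half of the sharpness claim independent of Theorem \ref{thm_deltaInRing} and Corollary \ref{cor_deltaRingForCollinear}, at the cost of the first-order bookkeeping in $\theta$ that you correctly flag as the only delicate point (I checked it: $|a-x|=|b-y|=(1-r)\slash2$ exactly, $|a-b|\to1-r$, so the bound does close). One last remark: your parenthetical claim that equality in $\tfrac12 s\le j^*$ is impossible for $r>0$ is left unproved in the proposal, but it is not needed, since the statement only asserts sharpness in the limit $r\to0^+$.
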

\begin{proof}
By \cite[Lemma 2.1, p. 1124 \& Lemma 2.2, p. 1125]{hvz}, $j^*_G(x,y)\leq s_G(x,y)\leq2j^*_G(x,y)$ holds for all $x,y\in G\subsetneq\R^n$, so the inequality of the theorem follows from this and Lemma \ref{thm_pr_ineqs}(2). By Corollary \ref{cor_deltaRingForCollinear}, for $x=\sqrt{r}$ and $y=-\sqrt{r}$,
\begin{align*}
\frac{{\rm th}(\delta_{R(r,1)}(x,y)\slash2)}{s_{R(r,1)}(x,y)}
=\frac{1+r}{2(1-\sqrt{r}+r)}
\to\frac{1}{2},
\quad
\frac{{\rm th}(\delta_{R(r,1)}(x,y)\slash2)}{j^*_{R(r,1)}(x,y)}
=\frac{(1+r)(2+\sqrt{r})}{2(1-\sqrt{r}+r)}
\to1,
\end{align*}
when $r\to0^+$. Similarly by Corollary \ref{cor_deltaRingForCollinear}, for $x=(1+r)\slash2+h$ and $y=(1+r)\slash2-h$ with $0<r<1$ and $0<h<(1-r)\slash2$,
\begin{align*}
\frac{{\rm th}(\delta_{R(r,1)}(x,y)\slash2)}{j^*_{R(r,1)}(x,y)}
=\frac{{\rm th}(\delta_{R(r,1)}(x,y)\slash2)}{s_{R(r,1)}(x,y)}
=\frac{2(1-r)^2}{(1-r)^2+8h(1-r)+4h^2}
\to2,
\end{align*}
when $h\to0^+$. Thus, the observation about sharpness follows.
\end{proof}

The result of Theorem \ref{thm_ringIne} is useful because it gives us bounds for the distortion of the triangular ratio metric and the $j^*$-metric under M\"obius transformations defined in the annular ring $R(r,1)$. While both these metrics are invariant under rotations about the origin and reflections over lines passing through the origin, they are not M\"obius invariant. For instance, their values clearly change under the inversion $f:x\mapsto rx\slash|x|^2$ of Example \ref{ex_mobf}.  

\begin{corollary}\label{cor_sjUnderMobRing}
For all $x,y\in R(r,1)$ and any M\"obius transformation $f:\overline{\R}^2\to\overline{\R}^2$ such that $f(R(r,1))=R(r,1)$,
\begin{align*}
\frac{1}{4}s_{R(r,1)}(x,y)
\leq s_{R(r,1)}(f(x),f(y))
\leq 4s_{R(r,1)}(x,y),\\
\frac{1}{2}j^*_{R(r,1)}(x,y)
\leq j^*_{R(r,1)}(f(x),f(y))
\leq 2j^*_{R(r,1)}(x,y).
\end{align*}
\end{corollary}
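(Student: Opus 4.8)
The plan is to route both comparisons through the one quantity among the three metrics that is genuinely invariant under the transformation at hand, namely the M\"obius metric, and then to transfer this invariance (up to bounded multiplicative constants) to $s_{R(r,1)}$ and $j^*_{R(r,1)}$ by means of the sharp sandwich inequalities of Theorem \ref{thm_ringIne}. The starting observation is that since $f$ is a M\"obius transformation with $f(R(r,1))=R(r,1)$, Theorem \ref{thm_seitminvariant} gives $\delta_{R(r,1)}(f(x),f(y))=\delta_{f(R(r,1))}(f(x),f(y))=\delta_{R(r,1)}(x,y)$, so that the monotone transform obeys
\begin{align*}
{\rm th}\frac{\delta_{R(r,1)}(f(x),f(y))}{2}={\rm th}\frac{\delta_{R(r,1)}(x,y)}{2}.
\end{align*}

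First I would extract from Theorem \ref{thm_ringIne} the two-sided bound attached to $s_{R(r,1)}$: rearranging $\tfrac{1}{2}s_{R(r,1)}(x,y)\leq{\rm th}(\delta_{R(r,1)}(x,y)\slash2)\leq2s_{R(r,1)}(x,y)$ yields
\begin{align*}
\frac{1}{2}\,{\rm th}\frac{\delta_{R(r,1)}(x,y)}{2}\leq s_{R(r,1)}(x,y)\leq2\,{\rm th}\frac{\delta_{R(r,1)}(x,y)}{2},
\end{align*}
valid at every pair of points of $R(r,1)$. Applying the right-hand inequality to the pair $(f(x),f(y))$, then the invariance of ${\rm th}(\delta\slash2)$, then the rearranged left-hand inequality at $(x,y)$ gives $s_{R(r,1)}(f(x),f(y))\leq2\,{\rm th}(\delta_{R(r,1)}(x,y)\slash2)\leq4s_{R(r,1)}(x,y)$; the symmetric chain produces $s_{R(r,1)}(f(x),f(y))\geq\tfrac{1}{2}\,{\rm th}(\delta_{R(r,1)}(x,y)\slash2)\geq\tfrac{1}{4}s_{R(r,1)}(x,y)$. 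This settles the first displayed inequality, the constant $4$ arising precisely as the product of the two factors $2$ contributed by the two ends of the sandwich.

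The bound for $j^*_{R(r,1)}$ follows in exactly the same way, except that Theorem \ref{thm_ringIne} supplies the tighter sandwich ${j^*_{R(r,1)}(x,y)\leq{\rm th}(\delta_{R(r,1)}(x,y)\slash2)\leq2j^*_{R(r,1)}(x,y)}$, equivalently $\tfrac{1}{2}\,{\rm th}(\delta_{R(r,1)}(x,y)\slash2)\leq j^*_{R(r,1)}(x,y)\leq{\rm th}(\delta_{R(r,1)}(x,y)\slash2)$. Chaining through the invariant middle term now costs only a single factor of $2$ at one end of each estimate, giving $\tfrac{1}{2}j^*_{R(r,1)}(x,y)\leq j^*_{R(r,1)}(f(x),f(y))\leq2j^*_{R(r,1)}(x,y)$. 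There is no genuine obstacle here: the whole argument is a two-step chaining through an invariant quantity, and the only point demanding care is the bookkeeping of how the constants of Theorem \ref{thm_ringIne} compound --- two factors of $2$ for $s_{R(r,1)}$, since both of its comparisons with ${\rm th}(\delta\slash2)$ are lossy, against a single factor for $j^*_{R(r,1)}$, since the estimate $j^*_{R(r,1)}\leq{\rm th}(\delta\slash2)$ is loss-free.
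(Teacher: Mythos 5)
Your proof is correct and is exactly the argument the paper intends: the paper's own proof reads ``Follows from Theorem \ref{thm_ringIne} and the M\"obius invariance of the M\"obius metric,'' and your chaining through the invariant quantity ${\rm th}(\delta_{R(r,1)}(\cdot,\cdot)\slash2)$, with the constants compounding to $4$ for $s_{R(r,1)}$ and $2$ for $j^*_{R(r,1)}$, is precisely the omitted bookkeeping.
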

\begin{proof}
Follows from Theorem \ref{thm_ringIne} and the M\"obius invariance of the M\"obius metric. 
\end{proof}

\begin{remark}
Computer tests suggest that the Lipschitz constant $Lip(f|R(r,1))$ of the function $f$ of Example \ref{ex_mobf} is 2 for both the triangular ratio metric and the $j^*$-metric, so probably only the inequality for the distortion of the $j^*$-metric in Corollary \ref{cor_sjUnderMobRing} is sharp. \end{remark}

Let us yet study the M\"obius metric in the punctured unit disk $\B^2\backslash\{0\}$.

\begin{theorem}\label{thm_deltaPuncDisk}
For all $x,y\in\B^2\backslash\{0\}$ with $|y|\leq|x|$,
\begin{align*}
\delta_{\B^2\backslash\{0\}}(x,y)
=\max\left\{
\rho_{\B^2}(x,y),\,
\log\left(1+\frac{|x-y|}{(1-|x|)|y|}\right)
\right\}.
\end{align*}
\end{theorem}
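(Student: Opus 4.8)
The plan is to exploit the fact that the boundary of the punctured disk is $\partial(\B^2\backslash\{0\})=S^1\cup\{0\}$, so the supremum defining $\delta_{\B^2\backslash\{0\}}(x,y)$ splits according to whether the two boundary points $a,b$ lie on $S^1$ or coincide with the puncture $0$. First I would isolate the contribution of pairs $a,b\in S^1$: this is exactly $\delta_{\B^2}(x,y)$, which equals $\rho_{\B^2}(x,y)$ by Theorem \ref{thm_pr_ineqs}(4). This accounts for the first term in the maximum.

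Next I would treat the pairs that use the puncture. Since $a,b$ must be distinct, at most one of them is $0$. Taking $b=0$ and $a\in S^1$, the Euclidean cross-ratio formula gives $|a,x,0,y|=\frac{|a||x-y|}{|a-x||y|}=\frac{|x-y|}{|a-x||y|}$, which is maximized over $a\in S^1$ by minimizing $|a-x|$; the minimum is $1-|x|$, attained at $a=x\slash|x|$. Hence this family contributes $\log(1+\frac{|x-y|}{(1-|x|)|y|})$. Symmetrically, taking $a=0$ and $b\in S^1$ gives $|0,x,b,y|=\frac{|x-y|}{|x||b-y|}$, which is maximized by minimizing $|b-y|=1-|y|$, yielding the value $\log(1+\frac{|x-y|}{|x|(1-|y|)})$.

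The remaining step is to compare the two puncture contributions. Comparing denominators, $(1-|x|)|y|-(1-|y|)|x|=|y|-|x|\leq0$ under the hypothesis $|y|\leq|x|$, so $(1-|x|)|y|\leq(1-|y|)|x|$ and the choice $b=0$ dominates. Collecting the three families, the overall supremum is the maximum of $\rho_{\B^2}(x,y)$ and $\log(1+\frac{|x-y|}{(1-|x|)|y|})$, which is the claimed formula.

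I expect the only genuine subtlety to be bookkeeping: one must verify that the three families -- both points on $S^1$, $b=0$ with $a\in S^1$, and $a=0$ with $b\in S^1$ -- exhaust all admissible pairs (the degenerate case $a=b=0$ gives cross-ratio $0$ and is irrelevant), and that each relevant supremum is attained so that no mixed limiting configuration can exceed the maximum of the two explicit values. The reduction of the $S^1$-optimization to the distance $1-|x|$ is elementary, as is the final denominator comparison, so the hypothesis $|y|\leq|x|$ enters only to select which puncture term survives.
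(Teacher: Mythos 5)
Your proof is correct and takes essentially the same route as the paper: split the supremum over $\partial(\B^2\backslash\{0\})=S^1\cup\{0\}$ into pairs with $a,b\in S^1$ (giving $\delta_{\B^2}(x,y)=\rho_{\B^2}(x,y)$ by Theorem \ref{thm_pr_ineqs}(4)) and pairs using the puncture, then minimize $|a-x|$ over $S^1$ to get the second term. Your explicit denominator comparison $(1-|x|)|y|\leq(1-|y|)|x|$ is in fact a welcome addition, since it supplies the justification for the step the paper only asserts in passing, namely that under $|y|\leq|x|$ the boundary point $a$ realizing the supremum cannot be $0$.
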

\begin{proof}
Since $|y|\leq|x|$, the point $a$ giving the supremum $\sup_{a,b\in(S^1\cup\{0\})}|a,x,b,y|$ cannot be zero and, by Theorem \ref{thm_pr_ineqs}(4), we will have
\begin{align*}
&\delta_{\B^2\backslash\{0\}}(x,y)
=\max\{\sup_{a,b\in S^1}\log(1+|a,x,b,y|),\,
\sup_{a\in S^1}\log(1+|a,x,0,y|)\}\\
&=\max\left\{\delta_{\B^2}(x,y),\,
\sup_{a\in S^1}\log\left(1+\frac{|x-y|}{|x-a||y|}\right)\right\}
=\max\left\{\rho_{\B^2}(x,y),\,
\log\left(1+\frac{|x-y|}{(1-|x|)|y|}\right)\right\}.
\end{align*}
\end{proof}

\begin{lemma}
For all distinct points $x,y\in\B^2\backslash\{0\}$ such that $|y|\leq|x|$,
\begin{align*}
\delta_{\B^2\backslash\{0\}}(x_0,y_0)
\leq\delta_{\B^2\backslash\{0\}}(x,y)
\leq\delta_{\B^2\backslash\{0\}}(x_1,y_1),
\end{align*}
when these points are as in Definition \ref{def_emr}, and $y_1\neq0$ and $|x_1|<1$ so that $\delta_{\B^2\backslash\{0\}}(x_1,y_1)$ is well-defined.
\end{lemma}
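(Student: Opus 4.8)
The plan is to invoke the explicit formula of Theorem~\ref{thm_deltaPuncDisk}, which expresses $\delta_{\B^2\backslash\{0\}}$ as a maximum of two components, and then to show that each component is monotone along the rotation. Unlike the annular case of Lemma~\ref{lem_emrForDelta}, no supremum over a cross-ratio survives in the punctured disk, so the whole argument reduces to the monotonicity of two explicit functions and no analogue of Conjecture~\ref{conj_decCr} is needed.

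First I would set up the parametrization. Writing $k=(x+y)\slash2$ and $q=|x-y|\slash2$, the points $x,y$ are antipodal on $S^1(k,q)$. Because $\delta_{\B^2\backslash\{0\}}$ is invariant under rotations about the origin and reflections in lines through the origin (these are M\"obius self-maps of $\B^2\backslash\{0\}$, so Theorem~\ref{thm_seitminvariant} applies), I may assume $k\in(0,1)$ and write $x=qe^{\mu i}+k$ and $y=qe^{(\pi+\mu)i}+k=-qe^{\mu i}+k$ with $\mu\in[0,\pi\slash2]$, where the reflection in the real axis is used to fix the sign so that $\mu\in[0,\pi\slash2]$. Then $(x,y)=(x_1,y_1)$ when $\mu=0$ and $(x,y)=(x_0,y_0)$ when $\mu=\pi\slash2$. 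A direct computation gives $|x|^2-|y|^2=4kq\cos(\mu)\geq0$ on $[0,\pi\slash2]$, so $|y|\leq|x|$ throughout and Theorem~\ref{thm_deltaPuncDisk} applies at every $\mu$; moreover the standing hypotheses $y_1\neq0$ and $|x_1|<1$ give $0<|y|\leq|x|<1$ for all $\mu$, so every rotated pair lies in $\B^2\backslash\{0\}$.

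Next I would check each component. For the hyperbolic term the computation is identical to the one in the proof of Lemma~\ref{lem_emrForDelta}: one finds
\[
{\rm th}\frac{\rho_{\B^2}(x,y)}{2}=\frac{2q}{\sqrt{(1+q^2-k^2)^2+4k^2q^2\sin^2(\mu)}},
\]
which is decreasing in $\mu\in[0,\pi\slash2]$, hence so is $\rho_{\B^2}(x,y)$. For the second term note that $|x-y|=2q$ is constant, while $|x|=\sqrt{k^2+q^2+2kq\cos(\mu)}$ is decreasing and $|y|=\sqrt{k^2+q^2-2kq\cos(\mu)}$ is increasing on $[0,\pi\slash2]$. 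Since $x\in\B^2$ and $y\neq0$, both $1-|x|>0$ and $|y|>0$, and both factors are increasing in $\mu$; therefore the product $(1-|x|)|y|$ is increasing, the quotient $|x-y|\slash((1-|x|)|y|)$ is decreasing, and so is $\log(1+|x-y|\slash((1-|x|)|y|))$.

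Finally, since $\delta_{\B^2\backslash\{0\}}(x,y)$ is the maximum of two functions each decreasing in $\mu$ on $[0,\pi\slash2]$, it is itself decreasing there. Evaluating at the endpoints $\mu=\pi\slash2$ and $\mu=0$ yields $\delta_{\B^2\backslash\{0\}}(x_0,y_0)\leq\delta_{\B^2\backslash\{0\}}(x,y)\leq\delta_{\B^2\backslash\{0\}}(x_1,y_1)$, as claimed. The only step demanding genuine care is the reduction to $\mu\in[0,\pi\slash2]$ together with the verification $|y|\leq|x|$, which guarantees that the correct branch of Theorem~\ref{thm_deltaPuncDisk} is used at every stage; everything else is elementary monotonicity, so there is no real obstacle of the kind that forced a conjecture in the annular case.
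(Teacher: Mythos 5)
Your proof is correct and takes essentially the same approach as the paper: parametrize the rotated pairs as $x=qe^{\mu i}+k$, $y=qe^{(\pi+\mu)i}+k$ with $\mu\in[0,\pi\slash2]$, apply Theorem~\ref{thm_deltaPuncDisk}, and show that both components of the maximum are decreasing in $\mu$, then evaluate at the endpoints. Your explicit check that $|y|\leq|x|$ holds along the whole rotation (so the theorem's hypothesis is valid at every $\mu$) and your monotone-positive-factors argument for the term $|x-y|\slash((1-|x|)|y|)$ are slightly more careful than the paper's brief appeal to differentiation, but the substance is identical.
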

\begin{proof}
Let $x=qe^{\mu i}+k$ and $y=qe^{(\pi+\mu)i}+k$ for $k=(x+y)\slash2$, $q=|x-y|\slash2$ and $\mu\in[0,2\pi)$. Suppose without loss of generality that $k\in(r,1)$ and $0\leq\mu\leq\pi\slash2$. Clearly, $(x,y)=(x_0,y_0)$ if $\mu=\pi\slash2$ and $(x,y)=(x_1,y_1)$ if $\mu=0$. Since $\rho_{\B^2}(qe^{\mu i}+k,qe^{(\pi+\mu)i}+k)$ is decreasing with respect to $\mu$ as in the proof of Lemma \ref{lem_emrForDelta} and it can be shown by differentiation that so is the quotient
\begin{align*}
\frac{|x-y|}{(1-|x|)|y|}
=\frac{2q}{(1+\sqrt{k^2+q^2+2qk\cos(\mu)})\sqrt{k^2+q^2-2qk\cos(\mu)}},
\end{align*}
the result follows from Theorem \ref{thm_deltaPuncDisk}. 
\end{proof}

\begin{lemma}
For all $x,y\in\B^2\backslash\{0\}$,
\begin{align*}
\frac{1}{2}s_{\B^2\backslash\{0\}}(x,y)
\leq j^*_{\B^2\backslash\{0\}}(x,y)
\leq {\rm th}\frac{\delta_{\B^2\backslash\{0\}}(x,y)}{2}
\leq 2j^*_{\B^2\backslash\{0\}}(x,y)
\leq 2s_{\B^2\backslash\{0\}}(x,y)
\end{align*}
and the constants here are the best ones possible.
\end{lemma}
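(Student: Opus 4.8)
The plan is to follow the same two-step strategy as in the proof of Theorem \ref{thm_ringIne}: first obtain the chain of inequalities from general principles, then exhibit explicit one-parameter families of points that make each constant sharp. For the inequalities nothing is special to the punctured disk. The general double inequality $j^*_G(x,y)\le s_G(x,y)\le 2j^*_G(x,y)$, valid in any $G\subsetneq\R^n$ by \cite{hvz}, yields simultaneously $\tfrac12 s_{\B^2\backslash\{0\}}(x,y)\le j^*_{\B^2\backslash\{0\}}(x,y)$ and $2j^*_{\B^2\backslash\{0\}}(x,y)\le 2s_{\B^2\backslash\{0\}}(x,y)$, while Theorem \ref{thm_pr_ineqs}(2) supplies the two middle links $j^*_{\B^2\backslash\{0\}}(x,y)\le{\rm th}(\delta_{\B^2\backslash\{0\}}(x,y)\slash2)\le 2j^*_{\B^2\backslash\{0\}}(x,y)$. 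So the whole chain is immediate and the only real content is sharpness, for which I would specialize to $r=0$ the two extremal families used in Theorem \ref{thm_ringIne}.

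First I would take the collinear pair $x=t$, $y=-t$ and let $t\to0^+$. Here the segment $[x,y]$ passes through the puncture $0\in\partial(\B^2\backslash\{0\})$, so $s_{\B^2\backslash\{0\}}(x,y)=1$, while $d_{\B^2\backslash\{0\}}(x)=d_{\B^2\backslash\{0\}}(y)=t$ gives $j^*_{\B^2\backslash\{0\}}(x,y)=\tfrac12$. Using Theorem \ref{thm_deltaPuncDisk}, the logarithmic term of the maximum dominates and I expect $\delta_{\B^2\backslash\{0\}}(x,y)=\log(1+\tfrac{2}{1-t})\to\log3$, whence ${\rm th}(\delta_{\B^2\backslash\{0\}}(x,y)\slash2)\to\tfrac12$ by the identity ${\rm th}(\tfrac12\log(1+k\slash h))=k\slash(2h+k)$. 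This forces $j^*\slash s=\tfrac12$ and ${\rm th}(\delta\slash2)\slash j^*\to1$, proving that the lower constants $\tfrac12$ and $1$ are best possible.

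Second I would take $x=\tfrac12+h$, $y=\tfrac12-h$ and let $h\to0^+$, the ``thickest radius'' analog of the second family in Theorem \ref{thm_ringIne}. A direct computation should give $j^*_{\B^2\backslash\{0\}}(x,y)=2h$ and, since $[x,y]$ meets neither $\{0\}$ nor $S^1$, also $s_{\B^2\backslash\{0\}}(x,y)=2h$, so the last link $2j^*\le2s$ becomes an equality in the limit and certifies that the constant $1$ in $j^*\le s$ is optimal. From Theorem \ref{thm_deltaPuncDisk} the logarithmic branch again realizes the maximum, since the hyperbolic branch satisfies ${\rm th}(\rho_{\B^2}(x,y)\slash2)\sim\tfrac{8h}{3}$, which is smaller than the $\sim4h$ produced by the logarithmic branch. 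Hence ${\rm th}(\delta\slash2)\sim4h=2j^*$, showing the upper constant $2$ in ${\rm th}(\delta\slash2)\le2j^*$ is sharp. The two families together pin down every constant in the chain.

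I expect the only delicate point to be the bookkeeping in the second family: one must confirm that in Theorem \ref{thm_deltaPuncDisk} it is the logarithmic branch, and not $\rho_{\B^2}$, that attains the maximum as $h\to0^+$, and that $s$ and $j^*$ genuinely coincide there, so that the single pair $x=\tfrac12+h$, $y=\tfrac12-h$ simultaneously certifies the sharpness of the upper $2$ and the tightness of $j^*\le s$. Both reductions are elementary via the tanh identity above, so no essential obstacle remains.
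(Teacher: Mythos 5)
Your proposal is correct and takes essentially the same approach as the paper: the chain follows from the \cite{hvz} lemmas together with Theorem \ref{thm_pr_ineqs}(2), and sharpness is certified by the same two families $(h,-h)$ and $(1/2+h,\,1/2-h)$ as $h\to0^+$, evaluated through Theorem \ref{thm_deltaPuncDisk}. Your computed ratios ($\tfrac{1}{2-h}\to\tfrac12$ against $s$, $\tfrac{2}{2-h}\to1$ against $j^*$, and $\tfrac{2}{1+4h^2}\to2$ for the second family) match the paper's proof, whose first display actually transposes the expressions $\tfrac{2}{2-h}$ and $\tfrac{1}{2-h}$ relative to the stated limits—a typographical slip that your computation implicitly corrects.
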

\begin{proof}
The inequality follows from \cite[Lemma 2.1, p. 1124 \& Lemma 2.2, p. 1125]{hvz} and Lemma \ref{thm_pr_ineqs}(2), and its sharpness from the fact that, for $h\to0^+$,
\begin{align*}
&\frac{{\rm th}(\delta_{R(r,1)}(h,-h)\slash2)}{s_{R(r,1)}(h,-h)}
=\frac{2}{2-h}
\to\frac{1}{2},
\quad
\frac{{\rm th}(\delta_{R(r,1)}(h,-h)\slash2)}{j^*_{R(r,1)}(h,-h)}
=\frac{1}{2-h}
\to1,\\
&\frac{{\rm th}(\delta_{R(r,1)}(1\slash2+h,1\slash2-h)\slash2)}{j^*_{R(r,1)}(1\slash2+h,1\slash2-h)}
=\frac{{\rm th}(\delta_{R(r,1)}(1\slash2+h,1\slash2-h)\slash2)}{s_{R(r,1)}(1\slash2+h,1\slash2-h)}
=\frac{2}{1+4h^2}
\to2.
\end{align*}
\end{proof}

\section{M\"obius metric in ring domains and capacity}

In this final section, we briefly study condenser capacity by using the M\"obius metric. Below, we introduce a new quantity that can be used to create lower bounds for the ring capacity of an arbitrary ring, see Lemma \ref{lemma_ringcap} and the preceding definitions. Note that it is useful to form bounds for the capacity of a condenser or a ring with the M\"obius metric rather than some other intrinsic metric, because the M\"obius invariance of this metric reflects the conformal invariance of the capacity at least partially.

\begin{definition}
For all disjoint non-empty sets $E,F\subset\overline{\R}^n$ with $\overline{E}\cap\overline{F}=\varnothing$, let $\delta(E,F)$ be the quantity $\delta_{\overline{\R}^n\backslash F}(E)$.
\end{definition}

It follows from Corollary \ref{cor_deltaEFsymmetric} below that the quantity $\delta(E,F)$ is truly symmetric but, in order to prove this theorem, we will need to consider another result first.

\begin{theorem}\label{thm_borderdeltadiam}
$(1)$ For a line segment $[u,v]$ in a domain $G\subset\R$ with ${\rm card}(\R\backslash G)\geq2$, $\delta_G([u,v])=\delta_G(u,v)$.\newline
$(2)$ For a compact set $E$ in a domain $G\subset\R^2$ with ${\rm card}(\R^2\backslash G)\geq2$, $\delta_G(E)=\delta_G(\partial E)$.\newline
$(3)$ For a compact set $E$ in a domain $G\subset\overline{\R}^n$ with ${\rm card}(\overline{\R}^n\backslash G)\geq2$, $\delta_G(E)=\delta_G(\partial E)$.
\end{theorem}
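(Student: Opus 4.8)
The statement asserts that the $\delta_G$-diameter of a set equals the $\delta_G$-diameter of its boundary. The plan is to prove part $(3)$, which contains $(2)$ as the special case $n=2$ and motivates the one-dimensional statement $(1)$. The governing intuition is geometric: the cross-ratio $|a,x,b,y|$ defining $\delta_G$ is largest, for fixed boundary points $a,b\in\partial G$, when $x$ and $y$ are pushed as far out as possible, i.e.\ toward $\partial E$. Since one inclusion is free — $\partial E\subset E$ gives $\delta_G(\partial E)\le\delta_G(E)$ immediately from the definition of $\delta_G(E)$ as a supremum over pairs from $E$ — the entire content lies in the reverse inequality $\delta_G(E)\le\delta_G(\partial E)$.

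First I would reduce to a statement about two points. Fix $x,y\in E$ and fix a pair $a,b\in\partial G$ achieving (or approaching) the supremum in $\delta_G(x,y)=\sup_{a,b\in\partial G}\log(1+|a,x,b,y|)$; it suffices to show $\log(1+|a,x,b,y|)\le\delta_G(\partial E)$. The key step is to move $x$ alone, holding $a,b,y$ fixed, and argue that the single-point function $x\mapsto |a,x,b,y|$ attains its maximum over the compact set $E$ at a boundary point of $E$. For points in $\R^n$ this cross-ratio is $|a,x,b,y|=\tfrac{|a-b|\,|x-y|}{|a-x|\,|b-y|}$, so as a function of $x$ it is a constant times $|x-y|/|x-a|$; I would show this function has no interior local maximum on $E$ by a maximum-principle or direct-gradient argument (the function is, up to the harmless factor, the modulus of the Möbius-type map $x\mapsto (x-y)/(x-a)$, whose modulus is subharmonic away from its pole $a\notin E$, hence attains its max on $\partial E$). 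Then symmetrically move $y$ to $\partial E$. This yields a point $x'\in\partial E$ with $|a,x',b,y|\ge|a,x,b,y|$, and then $y'\in\partial E$ with the same monotonicity, so $\log(1+|a,x,b,y|)\le\log(1+|a,x',b,y'|)\le\delta_G(\partial E)$. Taking the supremum over $x,y\in E$ gives the claim. For the spherical setting of general $n$ with the $\infty$ point, I would first apply a Möbius transformation (legitimate by Theorem~\ref{thm_seitminvariant}, since $\delta_G$ is Möbius invariant) to move any relevant point off $\infty$, reducing to the Euclidean cross-ratio; part $(1)$ is then the trivial one-dimensional instance where $\partial[u,v]=\{u,v\}$.

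The main obstacle I anticipate is making the ``push to the boundary'' rigorous for a general compact set $E$ whose boundary may be irregular, and handling the possibility that the extremal boundary points $a,b$ are only approached in a limit rather than attained (the supremum over $\partial G$ need not be a maximum). I would circumvent the attainment issue by working with an arbitrary fixed pair $a,b$ and passing to the supremum at the very end, so that no compactness of $\partial G$ is needed. The subharmonicity argument is the cleanest route to the interior-maximum claim and sidesteps any smoothness hypothesis on $\partial E$: since $x\mapsto\log|x-y|-\log|x-a|$ is harmonic on the interior of $E$ (both $y,a$ may be taken outside or on the boundary, and $a\notin\mathrm{int}\,E$), its exponential attains its maximum on $\partial E$ by the maximum principle, and the degenerate case $y\in\mathrm{int}\,E$ where $\log|x-y|$ has a singularity only helps, as the supremum there is $+\infty$ forcing $x$ to the boundary regardless.
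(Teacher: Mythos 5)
Your skeleton is essentially the paper's: the inclusion $\delta_G(\partial E)\leq\delta_G(E)$ is free, and for the reverse you fix $a,b\in\partial G$, push $x$ (then $y$) out to $\partial E$ without decreasing the cross-ratio $|a,x,b,y|$, and take suprema at the end. (Your decision to work with an arbitrary pair $a,b$ and pass to the supremum last is in fact cleaner than the paper's part $(3)$, which speaks of points ``giving the supremum''.) Where you differ is the tool for the push-to-boundary step. The paper argues geometrically: rotate $y$ about $x$ to a point $y'$ minimizing $|y'-b|$ --- this preserves $|x-y|$ and $|x-a|$, so the cross-ratio cannot decrease --- and, if $y'$ is still interior, slide along the segment $[y',b]$ to a point $y^*\in\partial E$; part $(3)$ is then reduced to this planar picture by working inside the plane spanned by $x,y,b$. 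Your route replaces this construction by a no-interior-maximum claim for $x\mapsto|x-y|\slash|x-a|$, which handles all dimensions (and part $(1)$) in one stroke rather than via a planar reduction.

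There is, however, a genuine gap in the justification you call the ``cleanest route''. Harmonicity of $\log|x-y|$ is special to the plane: in $\R^n$ one has $\Delta\log|x-y|=(n-2)\slash|x-y|^2$, so
\begin{align*}
\Delta\bigl(\log|x-y|-\log|x-a|\bigr)=(n-2)\left(\frac{1}{|x-y|^2}-\frac{1}{|x-a|^2}\right),
\end{align*}
which is negative wherever $|x-a|<|x-y|$; the function $|x-y|\slash|x-a|$ itself fails subharmonicity on the same set, and the map $x\mapsto(x-y)\slash(x-a)$ is not even defined for $n\geq3$. So the maximum-principle argument proves part $(2)$ but not part $(3)$. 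The repair is the other argument you mention in passing, the direct gradient computation, and it should be promoted to the main argument: $\nabla\bigl(|x-y|\slash|x-a|\bigr)=0$ forces $(x-y)|x-a|^2=|x-y|^2(x-a)$, whence, taking norms, $|x-a|=|x-y|$ and then $y=a$, which is impossible because $y\in G$ and $a\in\partial G$. Thus the only critical point is the global minimum $x=y$, so the maximum over the compact set $E$ is attained on $\partial E$, in every dimension. Finally, a small slip: $\log|x-y|\to-\infty$ (not $+\infty$) as $x\to y$; the singularity is harmless because it is a minimum --- and, in the plane, because subharmonicity persists through a value $-\infty$ --- not because the supremum blows up there.
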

\begin{proof}
(1) By the M\"obius invariance of $\delta_G$, we may assume that $G$ is an interval $(a,b)\subset\R$ and, since the maximum value of the cross ratio $|a,x,b,y|$ for $a<u\leq x<y\leq v<b$ is trivially obtained with $x=u$ and $y=v$, the result follows.  

(2) Let $a,b\in\partial G$ and choose some points $x,y\in E$. Suppose that $y\notin\partial E$. Rotate $y$ around the point $x$ into a new point $y'\in E$ so that $|y'-b|$ is at minimum. Clearly, $|a,x,b,y'|>|a,x,b,y|$ because $|y'-b|$ decreases and all the other distances are preserved in this rotation. If $y'\in\partial E$, fix $y^*=y'$. If $y'\notin\partial E$, then $\measuredangle BXY'=0$. Fix now $y^*\in[y',b]\cap\partial E$ instead as in Figure \ref{fig5}. Note that the points are collinear in this case, and regardless whether they are on the line in the order $x,y',y^*,b$ or in the order $x,b,y^*,y'$, we will have $|a,x,b,y^*|>|a,x,b,y'|$. Thus, for any $y\notin\partial E$, we can always find a point $y^*\in\partial E$ such that $|a,x,b,y^*|>|a,x,b,y|$. Similarly, if $x\notin\partial E$, the point $x$ can be replaced with a suitable boundary point $x^*$. Thus, for all $a,b\in\partial G$, the maximum value of $|a,x,b,y|$ is obtained with points $x,y\in\partial E$ and the result follows. 

(3) By the M\"obius invariance of $\delta_G$, it is enough to prove the result in the case $G\subset\R^n$. If $n=1$, the result follows from the first part of this theorem. Suppose that $n\geq2$, fix $x,y\in E$ and let $a,b\in\partial G$ be the points giving the supremum $\sup_{a,b\in\partial G}|a,x,b,y|$. If $y\notin\partial E$, consider the plane containing $y,x,b$ and replace $y$ by a new point $y^*$ on this plane, just like above. If $x\notin\partial E$, choose similarly a new point on the plane with $x,a,y^*$. Since these changes cannot decrease the cross ratio, the result follows.
\end{proof}

\newcommand{\boundellipse}[3]
{(#1) ellipse (#2 and #3)
}

\begin{figure}[ht]
    \centering
    \begin{tikzpicture}[scale=3]
    \draw[thick]\boundellipse{0.15,1.8}{-0.5}{1.5};
    \draw[thick] (0.3,0.7) -- (0.5,3.7);
    \draw[thick] (0.3,0.7) circle (0.03cm);
    \draw[thick] (0.5,3.7) circle (0.03cm);
    \draw[thick] (0.39,2.05) circle (0.03cm);
    \draw[thick] (0.39,2.05) arc (86.188:110:1.353);
    \draw[thick] (0.3,0.7) -- (-0.163,1.971);
    \draw[thick] (-0.163,1.971) circle (0.03cm);
    \draw[thick] (0.45,3) circle (0.03cm);
    \draw[thick] (0.35,-0.1) circle (0.03cm);
    \draw[thick] (0.5,3.7) -- (0.5,3.7);
    \draw[thick] (0.4,3.9) arc (30:-33:3.923);
    \node[scale=1.3] at (0.2,3.65) {$G$};
    \node[scale=1.3] at (0,2.9) {$E$};
    \node[scale=1.3] at (0.4,0.8) {$x$};
    \node[scale=1.3] at (-0.1,2.1) {$y$};
    \node[scale=1.3] at (0.5,2.15) {$y'$};
    \node[scale=1.3] at (0.6,3.1) {$y^*$};
    \node[scale=1.3] at (0.49,-0.04) {$a$};
    \node[scale=1.3] at (0.57,3.84) {$b$};
    \end{tikzpicture}
    \caption{The point $y^*\in\partial E$ such that $|a,x,b,y^*|\geq|a,x,b,y|$ is found as in the proof of Theorem \ref{thm_borderdeltadiam} for $x,y\in E\subset G$ and $a,b\in\partial G$.}
    \label{fig5}
\end{figure}
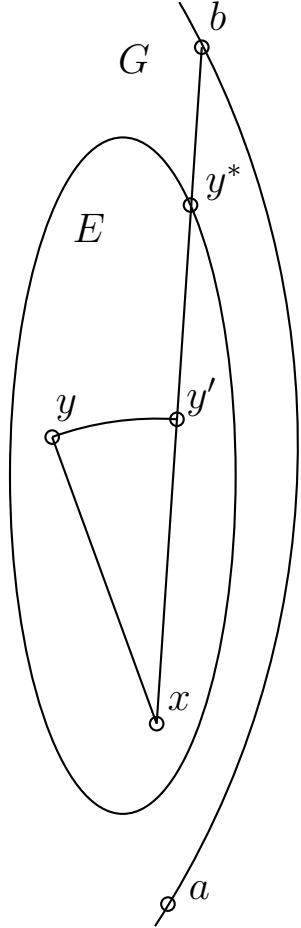

\begin{corollary}\label{cor_deltaEFsymmetric}
For all non-empty sets $E,F\subset\overline{\R}^n$ such that $\overline{E}\cap\overline{F}=\varnothing$, $\delta_{\overline{\R}^n\backslash F}(E)=\delta_{\overline{\R}^n\backslash E}(F)$.
\end{corollary}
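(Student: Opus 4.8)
The plan is to unfold both sides of the claimed identity into suprema of $\log(1+|\cdot,\cdot,\cdot,\cdot|)$, reduce each set to its boundary by Theorem \ref{thm_borderdeltadiam}(3), and then observe that the two resulting suprema run over the same index set $\partial E\times\partial F$ and differ only by an elementary symmetry of the cross-ratio.

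First I would record the symmetry needed. From the chordal expression $|a,b,c,d|=q(a,c)q(b,d)\slash(q(a,b)q(c,d))$ and the symmetry $q(p,w)=q(w,p)$ of the spherical metric, one reads off $|a,x,b,y|=|x,a,y,b|$: simultaneously interchanging the first entry with the second and the third with the fourth leaves the cross-ratio unchanged. This is the only algebraic identity the proof requires.

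Next I would unfold the left-hand side. Since $\partial(\overline{\R}^n\backslash F)=\partial F$, the definition of $\delta(E,F)$ gives
\[
\delta_{\overline{\R}^n\backslash F}(E)=\sup_{x,y\in E}\ \sup_{a,b\in\partial F}\log(1+|a,x,b,y|).
\]
Applying Theorem \ref{thm_borderdeltadiam}(3) to the set $E$ inside the domain $\overline{\R}^n\backslash F$ yields $\delta_{\overline{\R}^n\backslash F}(E)=\delta_{\overline{\R}^n\backslash F}(\partial E)$, so the outer supremum may be taken over $x,y\in\partial E$. Now invoking the cross-ratio identity to rewrite each $|a,x,b,y|$ as $|x,a,y,b|$, the double supremum over $x,y\in\partial E$ and $a,b\in\partial F$ becomes, after renaming, exactly the defining expression for $\delta_{\overline{\R}^n\backslash E}(\partial F)$ (here the factor $\partial(\overline{\R}^n\backslash E)=\partial E$ supplies the entries in slots one and three, and the points of $F$ supply slots two and four). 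A second application of Theorem \ref{thm_borderdeltadiam}(3), this time to $F$ inside $\overline{\R}^n\backslash E$, converts $\delta_{\overline{\R}^n\backslash E}(\partial F)$ back into $\delta_{\overline{\R}^n\backslash E}(F)$, producing the asserted identity.

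The step I expect to demand the most care is the slot-by-slot bookkeeping: before the symmetry identity can be invoked, the two diameters must be matched with $\partial F$ occupying the first and third entries of the cross-ratio and $\partial E$ the second and fourth, and it is precisely the two applications of Theorem \ref{thm_borderdeltadiam}(3) that make this matching possible. A minor technical point to dispatch is that Theorem \ref{thm_borderdeltadiam}(3) is phrased for compact sets; since $\overline{\R}^n$ is compact and passing to closures leaves the supremum of the continuous cross-ratio unchanged, one may assume $E$ and $F$ closed, which also validates the identities $\partial(\overline{\R}^n\backslash F)=\partial F$ and $\partial(\overline{\R}^n\backslash E)=\partial E$ used above.
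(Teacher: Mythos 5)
Your proof is correct and takes essentially the same route as the paper's: both reduce $E$ and $F$ to their boundaries via Theorem \ref{thm_borderdeltadiam}(3), exploit the cross-ratio symmetry $|a,x,b,y|=|x,a,y,b|$ (swapping slots one--two and three--four), and then recognize the resulting double supremum over $\partial E\times\partial F$ as $\delta_{\overline{\R}^n\backslash E}(F)$. Your closing remark about passing to closures so that Theorem \ref{thm_borderdeltadiam}(3) applies to compact sets is a detail the paper leaves implicit, and it only strengthens the argument.
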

\begin{proof}
It follows from Theorem \ref{thm_borderdeltadiam}(3), the symmetry of cross-ratio and the fact that $\partial F=\partial(\overline{\R}^n\backslash F)$ for all sets $F\subset\overline{\R}^n$ that
\begin{align*}
&\delta_{\overline{\R}^n\backslash F}(E)
=\delta_{\overline{\R}^n\backslash F}(\partial E)
=\sup_{x,y\in\partial E}\delta_{\overline{\R}^n\backslash F}(x,y)
=\sup_{x,y\in\partial E,\,a,b\in\partial F}\log(1+|a,x,y,b|)\\
&=\sup_{x,y\in\partial E,\,a,b\in\partial F}\log(1+|x,a,b,y|)
=\sup_{a,b\in\partial F}\delta_{\overline{\R}^n\backslash E}(a,b)
=\delta_{\overline{\R}^n\backslash E}(\partial F)
=\delta_{\overline{\R}^n\backslash F}(E).
\end{align*}
\end{proof}

Now that we have proved the symmetry of $\delta(E,F)$, let us study it further by finding its value in a few special cases.

\begin{theorem}
$(1)$ For the subsets $E=[-1,0]$ and $F=[s,\infty)$, $s>0$, of $\overline{\R}^n$, $\delta(E,F)=\log(1+1\slash s)$.\newline
$(2)$ If $E=[0,r]$ and $F=S^{n-1}$, $0<r<1$, instead, then $\delta(E,F)=2{\rm arth}(r)$.
\end{theorem}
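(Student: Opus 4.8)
The plan is to combine the two structural facts already proved: the symmetry $\delta(E,F)=\delta(F,E)$ of Corollary \ref{cor_deltaEFsymmetric} and the boundary reduction of Theorem \ref{thm_borderdeltadiam}. Writing $\delta(E,F)=\delta_{\overline{\R}^n\backslash F}(E)=\sup\log(1+|a,x,b,y|)$ with the supremum over $x,y\in\partial E$ and $a,b\in\partial F$, in each case I would locate the extremal quadruple explicitly and then evaluate the cross-ratio.

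For part (1) both $E=[-1,0]$ and $F=[s,\infty)$ lie on the real axis, so every admissible quadruple $a,x,b,y$ is collinear and $|a,x,b,y|$ is a one-dimensional cross-ratio. By the same monotonicity-along-the-line argument used in the proof of Theorem \ref{thm_borderdeltadiam} (applied to $E$, and to $F$ after invoking the symmetry of Corollary \ref{cor_deltaEFsymmetric}), the supremum is attained when $x,y$ are the endpoints $-1,0$ and $a,b$ are the endpoints $s,\infty$, so it remains only to test the finitely many endpoint assignments. Using the limiting convention $|a-b|/|a-x|\to1$ as $a\to\infty$ (equivalently the chordal cross-ratio of the preliminaries), the assignment $a=\infty,\ x=-1,\ b=s,\ y=0$ gives $|a,x,b,y|=|{-1}-0|/|s-0|=1/s$, and no admissible assignment exceeds this value (the only other value occurring being $1/(s+1)$). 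Hence $\delta(E,F)=\log(1+1/s)$.

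For part (2) the segment $E=[0,r]$ lies inside $\B^n$ while $\partial F=S^{n-1}$, so for any $x,y\in[0,r]$ the defining supremum runs over $a,b\in S^{n-1}$ and therefore equals $\delta_{\B^n}(x,y)$, which by Theorem \ref{thm_pr_ineqs}(4) is $\rho_{\B^n}(x,y)$. Thus $\delta(E,F)=\sup_{x,y\in[0,r]}\rho_{\B^n}(x,y)$. Since $[0,r]$ lies on a diameter of the ball, the hyperbolic distance increases as the two points separate (the derivative of $(y-x)/(1-xy)$ is positive in $y$ and negative in $x$), so the supremum equals $\rho_{\B^n}(0,r)$. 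Evaluating the first identity in \eqref{formula_rhoB} gives ${\rm sh}^2(\rho_{\B^n}(0,r)/2)=r^2/(1-r^2)$, whence ${\rm th}(\rho_{\B^n}(0,r)/2)=r$ and $\rho_{\B^n}(0,r)=2\,{\rm arth}(r)$, as claimed.

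The main obstacle is in part (1): one must justify that the supremum collapses onto endpoint configurations even though $\partial E$ and $\partial F$ are the full segments in $\R^n$ rather than just their endpoints, and then carry the point at infinity correctly through the cross-ratio in order to single out the extremal assignment. Part (2) is essentially routine once the supremum over $S^{n-1}$ is identified with the hyperbolic metric of $\B^n$.
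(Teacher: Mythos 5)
Your proposal is correct and takes essentially the same approach as the paper: for part (1) both arguments reduce the supremum of the cross-ratio to the extremal endpoint configuration (the quadruple $s,0,\infty,-1$ up to relabelling), yielding the value $1/s$, and for part (2) both identify $\delta_{\overline{\R}^n\backslash S^{n-1}}$ restricted to $[0,r]$ with $\delta_{\B^n}=\rho_{\B^n}$ via Theorem \ref{thm_pr_ineqs}(4) and evaluate $\rho_{\B^n}(0,r)=2\,{\rm arth}(r)$. The only organizational difference is that you justify the collapse to endpoints by citing Theorem \ref{thm_borderdeltadiam} and Corollary \ref{cor_deltaEFsymmetric}, whereas the paper runs the one-variable monotonicity argument on the cross-ratio directly (pushing $b\to\infty$ first and then optimizing $a,x,y$); both routes are valid and lead to the same computation.
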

\begin{proof}
(1) Let $x,y\in[-1,0]$ and $s\leq a<b$. Now, $y<a<b$ and $|a-b|<|y-b|$ so we need to choose $b=\infty$ to maximize $|a,x,b,y|$. The cross ratio $|a,x,\infty,y|$ is at greatest within limitations $x,y\in[-1,0]$ and $a\geq s>0$, when $a=s$, $x=0$ and $y=-1$. Consequently,
\begin{align*}
\delta(E,F)
=\delta_{\overline{\R}^n\backslash F}(E)
=\log(1+|s,0,\infty,-1|)
=\log(1+1\slash s).
\end{align*}
(2) By Theorem \ref{thm_pr_ineqs}(4),
\begin{align*}
\delta(E,F)
=\delta_{\overline{\R}^n\backslash F}(E)
=\delta_{\B^n}(E)
=\rho_{\B^n}(E)
=\rho_{\B^n}(0,r)
=2{\rm arth}(r).
\end{align*}
\end{proof}

We will yet introduce a few definitions that will be needed for Lemma \ref{lemma_ringcap}. The pair $(G,E)$ consisting of a domain $G\subset\R^n$ and a non-empty compact set $E\subset G$ is called a \emph{condenser}. Furthermore, if $G$ and $E$ are convex, then the set difference $G\backslash E$ is an example of a ring. More generally, a \emph{ring} is any domain $D\subset\overline{\R}^n$ whose complement $\overline{\R}^n\backslash D$ consist of exactly two components $C_0$ and $C_1$, and it can be denoted by $\mathcal{R}(C_0,C_1)$. The annular ring $R(r,1)$ with $0<r<1$ is a ring $\mathcal{R}(\overline{B}^2(0,r),\overline{\R}^n\backslash\B^2)$.

Define now the \emph{conformal capacity} of a condenser $(G,E)$ as
\begin{align}\label{condcapa}
\capa(G,E)=\inf_u\int_{G}|\nabla u|^n dm,
\end{align}
where infimum is taken over all functions $u\in C^\infty_0(G)$, $u: G\to[0,\infty)$ with $u(x)\geq1$ for all $x \in E$ and $dm$ stands for the $n$-dimensional Lebesgue measure. The definition \eqref{condcapa} can be also written as 
\begin{align*}
\capa(G,E)=\M(\Delta(E,\partial G;G)),
\end{align*}
where $\Delta(E,\partial G;G)$ is the family consisting of all such curves joining the set $E$ to the boundary $\partial G$ that are fully inside the domain $G$, and $\M(\Gamma)$ means the \emph{conformal modulus} of the curve family $\Gamma$, see \cite[6.1, p. 16]{v1} and \cite[pp. 103-106]{hkv}. Similarly, the \emph{capacity of a ring} $\mathcal{R}(C_0,C_1)$ is $\M(\Delta(C_0,C_1))$, where $\Delta(C_0,C_1)=\Delta(C_0,C_1;\overline{\R}^n)$.

Define yet the constant $c_n$, $n\geq2$, as in \cite[7.1.3, p. 114]{hkv}: 
\begin{align}\label{c_n}
c_n=\omega_{n-2}\left(2\int^{\pi\slash2}_0(\sin t)^{(2-n)\slash(n-1)} dt\right)^{1-n}
\geq\omega_{n-2}(\pi(n-1))^{1-n}
\quad\text{and}\quad
c_2=\frac{2}{\pi},   
\end{align}
where $\omega_{n-2}$ is the $(n-2)$-dimensional surface area of the sphere $S^{n-2}$.

\begin{lemma}\label{lemma_ringcap}
If $\mathcal{R}=\mathcal{R}(E,F)\subset\overline{\R}^n$ is a ring and $c_n$ as in \eqref{c_n}, then
\begin{align*}
\capa(\mathcal{R})\geq\frac{1}{2}c_n\delta(E,F).    
\end{align*}
\end{lemma}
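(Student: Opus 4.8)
The plan is to prove, for each admissible quadruple of boundary points, a pointwise lower bound for $\capa(\mathcal{R})$ by the corresponding cross-ratio, and then to pass to the supremum that defines $\delta(E,F)$. Since $\mathcal{R}=\mathcal{R}(E,F)$ is a ring, the sets $E$ and $F$ are disjoint closed continua, so $\partial E\subset E$ and $\partial F\subset F$, and Corollary~\ref{cor_deltaEFsymmetric} together with Theorem~\ref{thm_borderdeltadiam}(3) gives
\[
\delta(E,F)=\delta_{\overline{\R}^n\backslash F}(\partial E)=\sup_{x,y\in\partial E,\ a,b\in\partial F}\log\bigl(1+|a,x,b,y|\bigr).
\]
It therefore suffices to show $\capa(\mathcal{R})\geq\tfrac12 c_n\log(1+|a,x,b,y|)$ for each fixed quadruple $x,y\in\partial E$, $a,b\in\partial F$, since the left-hand side is independent of these points and taking the supremum on the right then yields the assertion.

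Next I would normalize the fixed quadruple to a Teichm\"uller configuration. Both the cross-ratio (by definition) and the capacity $\capa(\mathcal{R})=\M(\Delta(E,F))$ (by the conformal invariance of the modulus) are M\"obius invariant. Choosing a M\"obius transformation $g$ with $g(x)=0$, $g(y)=-e_1$ and $g(b)=\infty$, the invariance of the cross-ratio forces $|g(a)|=1/|a,x,b,y|=:t$. Applying the classical Teichm\"uller lower bound $\M(\Delta(C_0,C_1))\geq\tau_n(|z_2|/|z_1|)$, valid for disjoint continua with $0,z_1\in C_0$ and $z_2,\infty\in C_1$ (see \cite{hkv}), to $C_0=g(E)$ and $C_1=g(F)$ with $z_1=-e_1$ and $z_2=g(a)$ yields
\[
\capa(\mathcal{R})=\M\bigl(\Delta(g(E),g(F))\bigr)\geq\tau_n(t),
\]
where $\tau_n$ denotes the capacity of the Teichm\"uller ring.

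Finally I would invoke the logarithmic lower bound for $\tau_n$ in which the constant $c_n$ of \eqref{c_n} appears: $c_n$ is precisely the coefficient governing the blow-up of $\tau_n(t)$ as $t\to0^+$, and the corresponding estimate $\tau_n(t)\geq\tfrac12 c_n\log(1+1/t)$, valid for all $t>0$, gives
\[
\capa(\mathcal{R})\geq\tau_n(t)\geq\tfrac12 c_n\log\bigl(1+\tfrac1t\bigr)=\tfrac12 c_n\log\bigl(1+|a,x,b,y|\bigr).
\]
Taking the supremum over $x,y\in\partial E$ and $a,b\in\partial F$ and using the reduction from the first step completes the proof.

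The main obstacle lies in the last two steps: verifying that the Teichm\"uller lower bound applies, with parameter $t=1/|a,x,b,y|$, to the normalized continua $g(E),g(F)$ (which need not be radial segments), and quoting the functional inequality $\tau_n(t)\geq\tfrac12 c_n\log(1+1/t)$ in exactly the normalization that produces the constant $c_n$. The factor $\tfrac12$ is deliberately non-sharp---for $n=2$ one has $\tau_2(t)\sim 2c_2\log(1/t)$ as $t\to0^+$, leaving ample slack---so the difficulty is in citing the capacity estimates in the correct form rather than in optimizing constants; the concluding passage to the supremum is then routine.
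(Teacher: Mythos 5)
Your argument is correct in substance, but it is a genuinely different proof from the one in the paper. The paper's proof is a four-line reduction to known results: it quotes the $j$-metric capacity bound of \cite[Lemma 9.29, p. 166]{hkv}, namely $\capa(\mathcal{R})\geq c_n\min\{j_{\R^n\backslash F}(E),j_{\R^n\backslash E}(F)\}$ (only the first term when $\infty\in F$), converts $j$ into $\delta$ via Theorem \ref{thm_pr_ineqs}(1) --- which is precisely where the factor $\tfrac{1}{2}$ enters --- and finishes with domain monotonicity of $\delta$ and the symmetry of Corollary \ref{cor_deltaEFsymmetric}. You instead work at the level of a single boundary quadruple, normalize by a M\"obius map to a Teichm\"uller configuration, and invoke the classical symmetrization bound $\M(\Delta(C_0,C_1))\geq\tau_n(|z_2|\slash|z_1|)$ together with a logarithmic lower bound for $\tau_n$; in effect you re-derive, directly for the cross-ratio, the mechanism underlying \cite[Lemma 9.29]{hkv}. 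Your two flagged obstacles both dissolve. The symmetrization bound needs no special shape of $g(E),g(F)$: it holds for arbitrary disjoint continua passing through the prescribed points, and $g(E),g(F)$ are continua because $E,F$ are complementary components of a ring and $g$ is a homeomorphism of $\overline{\R}^n$. As for the functional inequality, it is available in the stronger form $\tau_n(t)\geq c_n\log(1+1\slash t)$ with no factor $\tfrac{1}{2}$: indeed, applying \cite[Lemma 9.29]{hkv} itself to the Teichm\"uller ring $E=[-e_1,0]$, $F=[te_1,\infty]$ gives exactly $\tau_n(t)=\capa(\mathcal{R})\geq c_n j_{\R^n\backslash F}(E)=c_n\log(1+1\slash t)$. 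Consequently your route proves the stronger conclusion $\capa(\mathcal{R})\geq c_n\,\delta(E,F)$, removing the paper's factor $\tfrac{1}{2}$, which the paper loses only by passing through the two-sided comparison $j\leq\delta\leq 2j$ that your M\"obius-invariant argument never needs. This improved constant is moreover optimal, since for the Teichm\"uller ring $\delta(E,F)=\log(1+1\slash t)$ (the paper's computation for $E=[-1,0]$, $F=[s,\infty)$) while $\tau_n(t)\sim c_n\log(1\slash t)$ as $t\to0^+$.

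Two minor corrections. First, your closing asymptotics are off by a factor of two: for $n=2$ one has $\tau_2(t)=c_2\log(16\slash t)+o(1)$, so $\tau_2(t)\sim c_2\log(1\slash t)$, not $2c_2\log(1\slash t)$; there is therefore no multiplicative slack in the constant, only additive slack, and your hedge $\tfrac{1}{2}c_n$ is safe but unnecessary. Second, the appeal to Corollary \ref{cor_deltaEFsymmetric} and Theorem \ref{thm_borderdeltadiam}(3) in your first step is superfluous: by definition $\delta(E,F)=\delta_{\overline{\R}^n\backslash F}(E)$ is already a supremum over $x,y\in E$ and $a,b\in\partial F\subset F$, and your pointwise estimate applies verbatim to any such quadruple, because $E$ is a continuum containing $x,y$ and $F$ is a continuum containing $a$ and $b$, so that $g(F)$ contains $g(a)$ and $\infty$.
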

\begin{proof}
By symmetry, let us suppose that $\infty\notin E$. By \cite[Lemma 9.29, p. 166]{hkv},
\begin{align*}
\capa(\mathcal{R})&\geq c_n\min\{j_{\R^n\backslash F}(E),j_{\R^n\backslash E}(F)\},\quad\text{if}\quad\infty\notin F,\quad\text{and}\\
\capa(\mathcal{R})&\geq c_nj_{\R^n\backslash F}(E),\quad\text{if}\quad\infty\in F.
\end{align*}
Regardless of whether $\infty\in F$ or not, we will have by Theorem \ref{thm_pr_ineqs}(1) and Corollary \ref{cor_deltaEFsymmetric},
\begin{align*}
\capa(\mathcal{R})
\geq\frac{1}{2}c_n\min\{\delta_{\R^n\backslash F}(E),\delta_{\R^n\backslash E}(F)\}
\geq\frac{1}{2}c_n\min\{\delta_{\overline{\R}^n\backslash F}(E),\delta_{\overline{\R}^n\backslash E}(F)\}
=\frac{1}{2}c_n\delta(E,F).
\end{align*}
\end{proof}

\begin{example}
Let $F$ be a compact subset of $\B^n$ and consider the ring $\mathcal{R}=\mathcal{R}(F,S^{n-1})$. There are two different lower bounds for the capacity of this ring: By Lemma \ref{lemma_ringcap} and Theorem \ref{thm_pr_ineqs}(4),
\begin{align}\label{lb0}
\capa(\mathcal{R})\geq\frac{1}{2}c_n\delta(F,S^{n-1})=\frac{1}{2}c_n\delta_{\overline{\R}^n\backslash S^{n-1}}(F)
=\frac{1}{2}c_n\delta_{\B^n}(F)
=\frac{1}{2}c_n\rho_{\B^n}(F)
\end{align}
and, by \cite[Lemma 7.13, p. 109; Lemma 9.20, p. 163]{hkv},
\begin{align}\label{lb1}
\capa(\mathcal{R})=\M(\Delta(F,S^{n-1}))
\geq\frac{1}{2}\M(\Delta(F,S^{n-1};\B^n))
\geq\frac{1}{2}\gamma_n\left(\frac{1}{\tnh(\rho_{\B^n}(F)\slash2)}\right),
\end{align}
where $\gamma_n$ is the Gr\"otzsch ring capacity function, see \cite[(7.17), p. 121]{hkv} for definition. If $n=2$, the lower bound \eqref{lb1} is better at least in some cases, but finding the exact value of the lower bound \eqref{lb0} is considerably easier since $c_2=2\slash\pi$.
\end{example}

\def\cprime{$'$} \def\cprime{$'$} \def\cprime{$'$}
\providecommand{\bysame}{\leavevmode\hbox to3em{\hrulefill}\thinspace}
\providecommand{\MR}{\relax\ifhmode\unskip\space\fi MR }
\providecommand{\MRhref}[2]{%
  \href{http://www.ams.org/mathscinet-getitem?mr=#1}{#2}
}
\providecommand{\href}[2]{#2}


\begin{thebibliography}{10}

\bibitem{chkv}{\sc
J. Chen, P. Hariri, R. Kl\'en and M. Vuorinen,}
Lipschitz conditions, triangular ratio metric, and quasiconformal maps.
\emph{Ann. Acad. Sci. Fenn. Math., 40} (2015), 683-709.

\bibitem{fhmv}{\sc
M. Fujimura, P. Hariri, M. Mocanu and M. Vuorinen,}
The Ptolemy–Alhazen Problem and Spherical Mirror
Reflection.
\emph{Comput. Methods and Funct. Theory, 19} (2019), 135-155.

\bibitem{fmv}{\sc
M. Fujimura, M. Mocanu and M. Vuorinen,}
Barrlund's distance function and quasiconformal maps. 
\emph{Complex Var. Elliptic Equ.} (2020), doi.org/10.1080/17476933.2020.1751137.

\bibitem{GO79}{\sc 
F.W. Gehring and B.G. Osgood,}
Uniform domains and the quasi-hyperbolic metric, \emph{J. Analyse Math., 36} (1979), 50-74.

\bibitem{hkv}{\sc
P. Hariri, R. Klén and M. Vuorinen,}
\emph{Conformally Invariant Metrics and Quasiconformal Mappings.}
Springer, 2020.

\bibitem{hkvz}{\sc
P. Hariri, R. Kl\'en, M. Vuorinen and X. Zhang,}
Some Remarks on the Cassinian Metric.
\emph{Publ. Math. Debrecen, 90}, 3-4 (2017), 269-285.

\bibitem{hvz}{\sc
P. Hariri, M. Vuorinen and X. Zhang,}
Inequalities and Bilipschitz Conditions for Triangular Ratio Metric.
\emph{Rocky Mountain J. Math., 47}, 4 (2017), 1121-1148.

\bibitem{h}{\sc  
P. H\"ast\"o,} 
A new weighted metric, the relative metric I. \emph{J. Math. Anal. Appl., 274} (2002), 38-58.

\bibitem{kl}{\sc
L. Keen  and N. Lakic,}
\emph{Hyperbolic geometry from a local viewpoint.}
London Mathematical Society Student Texts, 68. Cambridge University Press, Cambridge, 2007.

\bibitem{klvw}{\sc
R. Kl\'en, H. Lind\'en, M. Vuorinen and G. Wang,}
The visual angle metric and M\"obius transformations.
\emph{Comput. Methods and Funct. Theory, 14}, 2-3 (2014), 577-608.

\bibitem{ps}{\sc
S. Pouliasis and A. Yu. Solynin,}
Infinitesimally small spheres and conformally invariant metrics. \emph{J. Analyse Math.} (to appear).

\bibitem{sch}{\sc
O. Rainio,}
Intrinsic metrics under conformal and quasiregular mappings. 
Arxiv, 2103.04397.

\bibitem{fss}{\sc
O. Rainio,}
Intrinsic quasi-metrics. 
\emph{Bull. Malays. Math. Sci. Soc.} (to appear),
doi.org/10.1007/s40840-021-01089-9, Arxiv, 2011.02153.

\bibitem{seit}{\sc
O. Rainio and M. Vuorinen,}
M\"obius metric in sector domains. Arxiv, 2104.05972. (2021).

\bibitem{sinb}{\sc
O. Rainio and M. Vuorinen,}
Triangular ratio metric in the unit disk. 
\emph{Complex Var. Elliptic Equ.} (to appear), doi.org/10.1080/17476933.2020.1870452, Arxiv, 2009.00265.

\bibitem{sqm}{\sc
O. Rainio and M. Vuorinen,}
Triangular Ratio Metric Under Quasiconformal Mappings In Sector Domains. Arxiv, 2005.11990. (2020).

\bibitem{S99}{\sc
P. Seittenranta,}  M\"obius-invariant metrics. \emph{Math. Proc. Cambridge Philos. Soc. 125} (1999), 511–533.

\bibitem{v1}{\sc
J. V\"ais\"al\"a,} 
\emph{Lectures on $n$-dimensional quasiconformal mappings.} Lecture Notes in Math. Vol. 229, Springer- Verlag, Berlin- Heidelberg- New York, 1971.

\bibitem{cgqm}{\sc
M. Vuorinen,}
\emph{Conformal geometry and quasiregular mappings.}
Lecture Notes in Mathematics, 1319. Springer-Verlag, Berlin, 1988.

\end{thebibliography}
\end{document}